\documentclass[11pt,reqno]{amsart}
\usepackage[T1]{fontenc}
\usepackage[utf8]{inputenc}
\usepackage{lmodern}
\usepackage[a4paper,margin=28mm]{geometry}
\usepackage{amsmath,amssymb,amsthm,mathtools}
\usepackage{microtype,booktabs,array,longtable}
\usepackage{xcolor}
\usepackage{tikz}
\usepackage{bm}
\usetikzlibrary{arrows.meta,positioning}
\usepackage[colorlinks=true,linkcolor=blue!45!black,citecolor=blue!45!black,urlcolor=blue!45!black,breaklinks=true]{hyperref}
\hypersetup{pdftitle={Borcea's 2-variance conjecture},pdfauthor={Teng Zhang},
 pdfsubject={A computer-assisted proof in polynomial geometry},
 pdfkeywords={polynomial zeros, critical points, variance, computer-assisted proof}}
\usepackage{enumitem}
\usepackage{needspace}
\setlist[enumerate]{label=\textup{(\roman*)},leftmargin=*,itemsep=2pt,topsep=4pt}
\newtheorem{thm}{Theorem}[section]
\newtheorem{lem}[thm]{Lemma}
\newtheorem{prop}[thm]{Proposition}
\newtheorem{cor}[thm]{Corollary}
\newtheorem{conjecture}[thm]{Conjecture}
\theoremstyle{definition}

\theoremstyle{remark}
\newtheorem{rem}[thm]{Remark}
\numberwithin{equation}{section}
\newcommand{\C}{\mathbb C}
\newcommand{\R}{\mathbb R}
\newcommand{\dd}{\,\mathrm d}

\newcommand{\Rea}{\operatorname{Re}}
\newcommand{\Ima}{\operatorname{Im}}
\newcommand{\doi}[1]{\href{https://doi.org/#1}{\nolinkurl{#1}}}
\newcommand{\arxiv}[1]{\href{https://arxiv.org/abs/#1}{arXiv:#1}}
\newcommand{\step}[1]{\par\smallskip\noindent\emph{#1}\enspace}
\newcommand{\file}[1]{\texttt{\detokenize{#1}}}

\allowdisplaybreaks[1]
\title[Borcea's 2-variance conjecture]{Borcea's $2$-variance conjecture}
\author{Teng Zhang}
\address{School of Mathematics and Statistics, Xi'an Jiaotong University, Xi'an, China}
\date{}
\subjclass[2020]{Primary 30C15; Secondary 26D15, 65G30}
\keywords{Zeros of polynomials, critical points, Borcea's variance conjecture, Schoenberg's inequality, computer-assisted proof}
\begin{document}
\begin{abstract}
In this paper, we prove Borcea's 2-variance conjecture. The Lean 4 formalization of the main results are also provided.
\end{abstract}
\maketitle

\section{Introduction}\label{sec:intro}

Very recently, the geometry of polynomials has witnessed a series of major advances.

\begin{itemize}
   \item Mazur \cite{Maz26} obtained a  computer-verified proof of Sendov's conjecture. Tao \cite{Tao26} subsequently digested and reorganized the argument, substantially streamlining the proof and bringing its underlying mathematical structure to the foreground. In this process, Tao also found the same ideas can prove the stronger Phelps--Rodriguez conjecture \cite{PR72}.

    \item 
    Building on the ideas developed in Tao's treatment of Sendov's conjecture \cite{Tao26}, the author \cite{Zha26b} established a quantitative strengthening of Sendov's conjecture named the \emph{quadratic Tang--Zhang inequality}. 

    \item 
    A counterexample of Smale's mean value conjecture was found in \cite{Ada26}.

\end{itemize}
A closer look at the proofs of Sendov's conjecture and the quadratic Tang--Zhang inequality shows that they follow essentially the same pattern:
\begin{center}
\emph{assuming that a counterexample exists: direct arguments in low degrees, analytic estimates in high degrees, and computer-assisted verification for the remaining intermediate cases.}
\end{center}
This observation naturally suggests that the same general strategy may be effective for several related problems, including Schmeisser's conjecture \cite{Sch77}, Borcea's variance conjecture \cite{KPPSS11} and related generalizations \cite{TZ25,Tao26,Zha26a}. In this paper, we focus on Borcea's variance conjecture.

Let $P$ be a complex polynomial of degree $n\ge 2$ with zeros $z_1,\ldots,z_n$, counted with multiplicity. For $0<p\le \infty$,
define  the $p$-variance of the zero set of $P$ by
\begin{equation*}
 \begin{split}
 \sigma_p(P)&=\inf_{c\in\C}
 \left(\frac1n\sum_{j=1}^n|z_j-c|^p\right)^{\frac{1}{p}}
 \qquad(0< p<\infty),\\
 \sigma_\infty(P)&=\inf_{c\in\C}\max_{1\le j\le n}|z_j-c|.
 \end{split}
\end{equation*} By the monotonicity of normalized \(\ell^p\)-norms, \(\sigma_p(P)\) is nondecreasing in \(p\); that is,
\[
\sigma_p(P)\le \sigma_q(P)
\qquad (0<p<q\le\infty).
\]

Certain values of $\sigma_p(P)$ are of particular importance, see \cite[\S 2]{KPPSS11}.
The quantity $\sigma_1(P)$ is called the \emph{mean deviation} of the zero
set of $P$. For $p=2$, by the basic inner space techniques, the minimizing center is the centroid
$c_P=n^{-1}\sum_{j=1}^n z_j$, and hence
\[
\sigma_2(P)^2
=\frac{1}{n}\sum_{j=1}^n |z_j-c_P|^2.
\]
Thus $\sigma_2(P)^2$ is the variance of the zeros of $P$, while
$\sigma_2(P)$ is their standard deviation. For $p=\infty$,
$\sigma_\infty(P)$ is called the \emph{Chebyshev radius} of the zero
set of $P$; equivalently, it is the radius of the smallest closed disk
containing all the zeros of $P$. Any minimizing center in the definition
of $\sigma_\infty(P)$ is called a \emph{Chebyshev center} of the zero set.

Motivated by his work on Sendov's conjecture \cite{Bor96a,Bor96b,Bor98,Bor06,Bor07}, Borcea proposed a family of \(p\)-variance conjectures in which the spread of the zeros is measured by centered moments rather than by the radius of a containing disk; see \cite[Conjecture~1]{KPPSS11}.
\begin{conjecture}[Borcea \(p\)-variance conjectures]\label{conj:borcea}
Let \(P\) be a complex polynomial of degree \(n\ge 2\) and $1\le p\le \infty$. For every zero \(a\) of \(P\), there exists a critical point \(\zeta\) of \(P\) such that
\[
    |a-\zeta|\le \sigma_p(P).
\]
\end{conjecture}

Since \(\sigma_p(P)\) is nondecreasing in \(p\), the case \(p=1\) is the strongest form of Conjecture~\ref{conj:borcea}. At the other extreme, the limiting case \(p=\infty\), which is the weakest member of the family, is equivalent to Sendov's conjecture.

Conjecture~\ref{conj:borcea} is known in several special cases:
\begin{itemize}
    \item \(n\le 3\) and \(p=1\)
    \cite[\S4]{KPPSS11};

    \item \(P\) has at most three distinct zeros and \(p=2\)
    \cite[Theorem~3.2]{KPPSS11};

    \item all zeros of \(P\) are real and \(p=2\); Borcea
    (see \cite[Theorem~6.1]{KPPSS11}) obtained the stronger result that the radius
    \(\sigma_2(P)\) can be reduced by the factor \(1/\sqrt{n-1}\).
\end{itemize}

Our main result settles the case \(n\ge 2\) and \(p=2\) in Conjecture~\ref{conj:borcea}.

\begin{thm}\label{thm:main}
Let $P$ be a complex polynomial of degree $n\ge2$. For every zero \(a\) of \(P\), there exists a critical point \(\zeta\) of \(P\) such that
\begin{equation}\label{eq:main}
 |a-\zeta|\le\sigma_2(P).
\end{equation}
The constant 1 in \eqref{eq:main} is sharp. 
\end{thm}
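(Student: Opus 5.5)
We argue by contradiction, following the three-regime pattern described in the introduction. By translation and scaling, which change $|a-\zeta|$ and $\sigma_2(P)$ by the same factor, we normalize $P$ to be monic with centroid $c_P=0$ and $a=1$. Write $P(z)=\prod_{j=1}^n(z-z_j)$ with $z_1=1$ and critical points $\zeta_1,\dots,\zeta_{n-1}$. The centroid of the critical points equals $c_P=0$. Suppose, toward a contradiction, that $|1-\zeta_k|>\sigma:=\sigma_2(P)$ for all $k$, where $\sigma^2=\frac1n\sum|z_j|^2$. Note first that $\sigma^2\ge 1/n\cdot(1+1/(n-1))=1/(n-1)$, since the remaining zeros must average to $-1/(n-1)$. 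Sharpness will be shown by an explicit family. The natural candidate is $P(z)=z^n-1$ rotated so that $a=1$: all zeros lie on the unit circle, the centroid is $0$, $\sigma_2=1$, and the only critical point is $0$, at distance exactly $1$. This settles the sharpness claim.

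\emph{Key identity.} From $P'(1)=n\prod_k(1-\zeta_k)$ and $P'(1)=\prod_{j\ge2}(1-z_j)$ we get
\[
\prod_{k=1}^{n-1}|1-\zeta_k|=\frac1n\prod_{j=2}^n|1-z_j|.
\]
By AM--GM, and using $\sum_{j\ge2} z_j=-1$,
\[
\prod_{j\ge2}|1-z_j|^2\le\Bigl(\frac{1}{n-1}\sum_{j\ge2}|1-z_j|^2\Bigr)^{n-1}=\Bigl(\frac{n\sigma^2+n}{n-1}\Bigr)^{n-1}.
\]
The contradiction hypothesis gives $\prod_k|1-\zeta_k|^2>\sigma^{2(n-1)}$. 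Hence
\[
n^2\sigma^{2(n-1)}<\Bigl(\frac{n(1+\sigma^2)}{n-1}\Bigr)^{n-1}.
\]
This forces $\sigma$ to be bounded by a constant depending on $n$. For large $\sigma$ it is immediately contradictory, and more generally it confines $\sigma^2$ to an interval $[1/(n-1),S_n]$, where $S_n\to$ something of order $1$. This bound alone is not enough, so I would combine it with a second constraint from the critical points themselves. We have $\sum_k \zeta_k=0$, and Schoenberg's inequality (proved, e.g., by de Bruijn--Springer/Pereira/Malamud) gives $\sum_k|\zeta_k|^2\le \frac{n-2}{n}\sum_j|z_j|^2=(n-2)\sigma^2$. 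Then
\[
\sum_k|1-\zeta_k|^2=(n-1)+\sum_k|\zeta_k|^2\le (n-1)+(n-2)\sigma^2 .
\]
Taking the minimum over $k$ yields $\min_k|1-\zeta_k|^2\le 1+\frac{n-2}{n-1}\sigma^2$. This gives a contradiction only when this quantity is $\le\sigma^2$, i.e.\ when $\sigma^2\ge n-1$.

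\emph{The main obstacle.} The hard regime is the window of moderate $\sigma$, roughly $\sigma\in[1/\sqrt{n-1},\sqrt{n-1}]$, where neither inequality closes the argument. Here I would use sharper information. One option is a refined AM--GM that uses the deficiency of the product, combined with the constraint $\prod_k|1-\zeta_k|\ge$ value and the variance bound on the $\zeta_k$, to show that the $\zeta_k$ cannot all lie outside the disk $D(1,\sigma)$. This reduces to maximizing $\prod_k|1-\zeta_k|$ subject to $\sum\zeta_k=0$ and $\sum|\zeta_k|^2\le (n-2)\sigma^2$, and comparing the maximum with $\frac1n\bigl(\frac{n(1+\sigma^2)}{n-1}\bigr)^{(n-1)/2}$. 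Another option is the extremal structure: in a counterexample of minimal "excess", perturbation arguments (as in Tao's treatment) would force the zeros to be nearly equidistributed on a circle, where the equality case $z^n-1$ can be checked directly.

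For small $n$ I would do direct algebraic verification. For $n=2$ the claim is trivial, and for $n=3$ and larger it is known for few distinct zeros. For large $n$, I would use asymptotic analytic estimates that sharpen the two inequalities above. The finitely many intermediate degrees would then be handled by interval-arithmetic verification on a compact parameter domain. The analytic sharpening in the moderate-$\sigma$ window is where I expect the real difficulty lies.
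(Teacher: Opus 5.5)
\paragraph{Verdict.} There is a genuine gap: your proposal handles only the extreme ranges of $\sigma$ and leaves the whole intermediate range, which is the content of the theorem, unproved.

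\paragraph{What you have is correct.} Your preliminary steps agree with the paper.
\begin{itemize}
\item The identity $n\prod_k|1-\zeta_k|=\prod_{j\ge2}|1-z_j|$ combined with AM--GM is the paper's \eqref{eq:product-basic} with $s$ replaced by $1$, after the rescaling $a=1/\sigma$.
\item The Schoenberg computation is right.
\item $z^n-1$ is exactly the paper's sharpness example.
\end{itemize}
These two inequalities only remove the extremes. The product bound already forces $\sigma^2<(n-1)/(2\log n-1)$. So your $S_n$ grows like $n/(2\log n)$; it does not stay of order $1$. The Schoenberg test closes only for $\sigma^2\ge n-1$, which is therefore redundant. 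Everything in between is left to two unspecified options.

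\paragraph{Why option (a) cannot work.} The data it uses are simultaneously satisfiable inside the window. These data are $\sum_k\zeta_k=0$, $\sum_k|\zeta_k|^2\le(n-2)\sigma^2$, $|1-\zeta_k|>\sigma$, and the cap $n\prod_k|1-\zeta_k|\le\bigl(n(1+\sigma^2)/(n-1)\bigr)^{(n-1)/2}$. For example, take $\sigma=1$, $n-1$ even and large, and put half of the critical points at $iy$ and half at $-iy$ with $y$ small. Every constraint holds, since $n(1+y^2)^{(n-1)/2}$ is far below $(2n/(n-1))^{(n-1)/2}$. Maximizing $\prod_k|1-\zeta_k|$ under the Schoenberg constraints just returns $\bigl(1+\tfrac{n-2}{n-1}\sigma^2\bigr)^{(n-1)/2}$. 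That is a second upper bound for the same product, and comparing two upper bounds yields no contradiction. For the same reason, ``sharpening the two inequalities'' for large $n$ cannot succeed. Option (b) names a perturbation argument but gives no mechanism for it.

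\paragraph{The missing idea.} You need a relation that uses $P(a)=0$ beyond $P'(a)=\prod_j(a-z_j)$.
\begin{itemize}
\item The paper integrates $P'=n\prod_j(z-\zeta_j)$ along the segment from the centroid $0$ to $a$.
\item In reciprocal coordinates $q_j=1/(a-\zeta_j)$, with $F(t)=\prod_j(1-atq_j)$ and $J=\prod_jz_jq_j$, this yields $(-1)^{n-1}J=n\int_0^1F(t)\dd t$.
\item It also yields the differential identity $F(1)+(n-1)a\mu\int_0^1F(t)\dd t+\mathcal R=1$ and the centered form \eqref{eq:center-identity}.
\item These are coupled with the moment inequalities of Lemma~\ref{lem:moments}, the bound $|a\mu J|\le\Phi_{n-1}(a)|\mu|\prod_j|q_j|$, and the omitted-factor and centered-interpolation estimates. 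Only then do you get scalar necessary conditions in $(a,|\mu|,s)$ that can actually fail.
\end{itemize}
Even so, these conditions close analytically only for $n\ge100001$. The range $4\le n\le100000$ needs three further ingredients:
\begin{itemize}
\item the compactness bound $a<5$;
\item an exact-arithmetic certified subdivision of $[0,5]\times[0,1]\times[0,1]$;
\item a separate analytic lemma (Lemma~\ref{lem:near}) excluding $|\mu|\ge1-10^{-5}$ for $4\le n\le13$, which is a neighborhood of the equality configuration $z^n-1$.
\end{itemize}

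\paragraph{Minor points.} Your normalization $a=1$ presupposes $\sigma>0$, $a$ simple, and $a\ne c_P$. The case $a=c_P$ follows directly from Schoenberg: $\min_k|\zeta_k-c_P|^2\le\tfrac{n-2}{n-1}\sigma^2<\sigma^2$. The few-distinct-zeros theorem settles only $n\le3$ completely.
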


As an immediate consequence of Theorem~\ref{thm:main}
(see also \cite[\S6.3]{KPPSS11}), we obtain the following corollary.
Indeed, Malamud \cite{Mal05} related the zeros and critical points of a
polynomial to the spectra of a normal matrix and one of its principal
submatrices.  In the circulant setting, Kushel and Tyaglov
\cite[Theorem~2.1]{KT16} showed that if \(C\) is an \(n\times n\)
circulant matrix with characteristic polynomial \(P\), then the spectrum
of its leading \((n-1)\times(n-1)\) principal submatrix consists precisely
of the critical points of \(P\), counted with multiplicity.

\begin{cor}\label{cor:toeplitz-general}
Assume that \(n\ge3\) and \(a_0,\ldots,a_{n-1}\in\C\). Then the
\((n-1)\times(n-1)\) Toeplitz matrix
\[
B=
\begin{pmatrix}
a_0      & a_1      & \cdots & a_{n-2}\\
a_{n-1}  & a_0      & \cdots & a_{n-3}\\
\vdots   & \vdots   & \ddots & \vdots\\
a_2      & a_3      & \cdots & a_0
\end{pmatrix}
\]
has at least one eigenvalue \(\lambda\) satisfying
\[
\left|
\lambda-\sum_{k=0}^{n-1}a_k
\right|^2
\le
\sum_{k=1}^{n-1}|a_k|^2.
\]
\end{cor}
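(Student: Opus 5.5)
The corollary follows from Theorem~\ref{thm:main} applied to the characteristic polynomial of a suitable circulant matrix, with the Kushel--Tyaglov theorem identifying the critical points.

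\textbf{Step 1 (the circulant).} Extend \(B\) to the \(n\times n\) circulant \(C\) whose first row is \((a_0,a_1,\ldots,a_{n-1})\) and whose rows are successive cyclic shifts. Its leading \((n-1)\times(n-1)\) principal submatrix is exactly \(B\). I would double-check the indexing: row \(i\), column \(j\) of \(C\) is \(a_{(j-i)\bmod n}\), and the second row begins \(a_{n-1},a_0,\ldots\), matching \(B\). Let \(P\) be the characteristic polynomial of \(C\), of degree \(n\). By \cite[Theorem~2.1]{KT16}, the eigenvalues of \(B\) are precisely the critical points of \(P\), with multiplicity.

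\textbf{Step 2 (the zeros).} Put \(\omega=e^{2\pi i/n}\). The eigenvalues of \(C\), i.e.\ the zeros of \(P\), are
\[
z_j=\sum_{k=0}^{n-1}a_k\omega^{jk},\qquad j=0,\ldots,n-1.
\]
In particular \(z_0=\sum_k a_k\), which I take as the distinguished zero \(a\) in Theorem~\ref{thm:main}.

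\textbf{Step 3 (the variance).} The centroid is \(c_P=\frac1n\sum_j z_j=a_0\). The map \((a_k)\mapsto(z_j)\) is \(\sqrt n\) times a unitary DFT, so Parseval gives \(\sum_j|z_j-a_0|^2=n\sum_{k\ge1}|a_k|^2\). Hence
\[
\sigma_2(P)^2=\sum_{k=1}^{n-1}|a_k|^2 .
\]

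\textbf{Step 4 (conclusion).} Theorem~\ref{thm:main} now yields a critical point \(\lambda\) of \(P\), hence an eigenvalue of \(B\), satisfying
\[
\Bigl|\lambda-\sum_k a_k\Bigr|^2\le\sigma_2(P)^2=\sum_{k=1}^{n-1}|a_k|^2 .
\]
The hypothesis \(n\ge3\) only ensures that \(B\) is a genuine matrix of size at least \(2\). The only delicate point is the indexing convention in Step 1, so that \(B\) is exactly the leading principal submatrix to which \cite{KT16} applies; everything else is routine.
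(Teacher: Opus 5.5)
Your proposal is correct and follows the same route the paper indicates: you realize \(B\) as the leading principal submatrix of the circulant with first row \((a_0,\ldots,a_{n-1})\), identify its eigenvalues with the critical points of the characteristic polynomial via \cite[Theorem~2.1]{KT16}, and apply Theorem~\ref{thm:main} at the zero \(\sum_k a_k\). Your explicit checks of the indexing, the centroid \(a_0\), and the identity \(\sigma_2(P)^2=\sum_{k\ge1}|a_k|^2\) via Parseval supply exactly the details the paper leaves implicit.
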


\medskip
\noindent\textbf{Sketch of the proof.}
We briefly describe the proof of Theorem~\ref{thm:main}.
After normalization, a hypothetical counterexample is described by the
reciprocal critical-point coordinates
\[
 q_j=\frac1{a-\zeta_j},
 \qquad
 \mu=\frac1{n-1}\sum_{j=1}^{n-1}q_j,
 \qquad
 s=\frac1{n-1}\sum_{j=1}^{n-1}|q_j|^2,
\]
with \(|q_j|<1\).  Schoenberg's inequality and several
Cauchy--Schwarz estimates reduce the geometry of the critical points to
quantitative restrictions on \(a\), \(|\mu|\), and \(s\).

We then study
$
 F(t)=\prod_{j=1}^{n-1}(1-atq_j).
$
Two polynomial identities, together with direct product estimates and a
centered interpolation between \(q_j=\mu\) and the actual configuration,
give explicit scalar inequalities that every normalized counterexample
must satisfy.

These inequalities exclude all \(n\ge100001\) by elementary analytic
estimates.  For \(4\le n\le100000\), we first obtain additional product
and coefficient bounds and then reduce the problem to a compact box in
the parameters \((a,|\mu|,s)\).  A finite rational subdivision of this
box is excluded by exact-arithmetic certificates with directed rounding.
Together with the known cases \(n=2,3\), this proves the theorem.

\medskip
\noindent\textbf{Organization of the paper.}
In Section~\ref{sec:notation}, we normalize a hypothetical counterexample
and introduce the reciprocal coordinates and second-moment notation.
Section~\ref{sec:reduction} establishes the basic reciprocal-moment
inequalities, polynomial identities, and product bounds.
In Section~\ref{sec:integral}, we develop the direct and centered integral
estimates used throughout the proof.
Section~\ref{sec:large} excludes all degrees \(n\ge100001\) by purely
analytic estimates.
Sections~\ref{sec:products} and \ref{sec:coefficients} derive the additional
product and coefficient bounds needed for the remaining degrees, including
a separate analytic treatment of configurations near \(|\mu|=1\).
Section~\ref{sec:finite} reduces the remaining problem to a compact
three-parameter box and completes the proof by an exact finite verification.
Finally, Appendix~\ref{app:boxes} gives the mathematical specification of
the interval tests used by the certificates, while
Appendix~\ref{app:arithmetic} describes the exact arithmetic, certificate
format, and reproduction procedure.

\medskip
\noindent\textbf{Acknowledgments and AI tools disclosure.}
The author is grateful to Terence Tao for helpful discussions in the
comments on his \href{https://terrytao.wordpress.com/2026/08/12/a-digestion-of-the-proof-of-sendovs-conjecture/}{blog post}.  He also thanks the developers
and maintainers of the computational tools used in the preparation and
verification of this work.

Teng Zhang is supported by the China Scholarship Council, the Young Elite
Scientists Sponsorship Program for PhD Students of the China Association
for Science and Technology, and the Fundamental Research Funds for the
Central Universities at Xi'an Jiaotong University
(Grant No.~xzy022024045).

ChatGPT was used for English-language editing and proofreading, and as an
exploratory tool for discussing possible approaches to selected parts of
the argument.  The mathematical arguments and proofs in the original manuscript were
independently developed, checked, and written by the author.
The computer-assisted part of the proof is certified by the exact
checkers and certificate files described in
Appendices~\ref{app:boxes} and \ref{app:arithmetic}; the acceptance of
a certificate does not rely on output from AI tools.

The Lean~4 formalization in~\cite{ZhaL} was generated using
OpenAI's Codex and checked by the Lean kernel.

\section{Reduction and notation}\label{sec:notation}

We first normalize the problem and introduce the notation that will be used
throughout the proof. Suppose that \eqref{eq:main} fails at a zero of a polynomial of degree $n\ge4$. The variance is positive and the prescribed zero is simple. Translate the centroid to zero, divide the zeros by $\sigma_2(P)$, rotate the prescribed zero to the nonnegative real axis, and make the polynomial monic. The resulting polynomial satisfies
\begin{equation}\label{eq:normalization}
 \begin{gathered}
 P(z)=(z-a)\prod_{j=1}^{n-1}(z-z_j),\qquad a\ge0,\\
 a+\sum_{j=1}^{n-1} z_j=0,\qquad
 a^2+\sum_{j=1}^{n-1}|z_j|^2=n.
 \end{gathered}
\end{equation}
Let $\zeta_1,\ldots,\zeta_{n-1}$ be its critical points.  

The classical Schoenberg inequality states that  if
\(z_1,\ldots,z_n\) are the zeros of a complex polynomial \(P\),
and \(\zeta_1,\ldots,\zeta_{n-1}\) are its critical points, all counted
with multiplicity, then, provided that the centroid of the zeros is the
origin, i.e.\ \(\sum_{j=1}^n z_j=0\), one has
\begin{equation}\label{eq:schoenberg}
	\sum_{k=1}^{n-1} |\zeta_k|^2 \le \frac{n-2}{n} \sum_{j=1}^n |z_j|^2,
\end{equation}
where equality holds if and only if all zeros are collinear in the complex plane. This was conjectured by    Schoenberg \cite{Sch86}  and confirmed independently by Pereira \cite{Per03} and Malamud \cite{Mal05}. It
was subsequently reproved by Cheung–Ng \cite{CN06} and Kushel–Tyaglov \cite{KT16} using distinct
approaches.  \eqref{eq:schoenberg} has since inspired a series of higher-order generalizations, collectively referred to as \emph{Schoenberg type inequalities}, see \cite{BS99,CN06,KT16,LXZ21,GGTW25,Tan25,TZ25, Zha26c, Zha26d}. As an interesting application of \eqref{eq:schoenberg}, the author~\cite{Zha26a,Zha27} presented some improvements of Schmeisser's conjecture \cite{Sch77} for the barycenter case.

Coefficient comparison and the classical Schoenberg inequality \eqref{eq:schoenberg}  give
\begin{equation}\label{eq:critical-moments}
 \frac1{n-1}\sum_{j=1}^{n-1}\zeta_j=0,\qquad
 V_*:=\frac1{n-1}\sum_{j=1}^{n-1}|\zeta_j|^2\le V_n:=1-\frac1{n-1}.
\end{equation}
A counterexample has $|a-\zeta_j|>1$ for every $j$, so \eqref{eq:critical-moments} excludes $a=0$. The Cauchy--Schwarz inequality in \eqref{eq:normalization} gives $a^2\le n-1$.

In the rest of the paper, a \emph{normalized counterexample} means a polynomial satisfying \eqref{eq:normalization} with $a>0$ and $|a-\zeta_j|>1$ for every $j$. Unless stated otherwise, the following notation refers to such a counterexample.

\medskip
\noindent\textbf{Notation.}
We shall use the following notation throughout the argument. For the quantities already appearing in Tao's
exposition \cite{Tao26}, we keep the same notation, namely $a$, $z_j$,
$\zeta_j$, $q_j$, $x$, $y$, $F(t)$, and $J$. We also retain from
\cite{Zha26b} the second-moment notation $s$, $w_j$, $v$, and $\beta_s(t)$.
The remaining quantities \(\mu\), \(V\), \(\rho\), \(e_k\), and \(u_+\) are introduced for the purposes of the present argument.

\begin{itemize}[leftmargin=*,itemsep=4pt,topsep=5pt]

\item $q_j:=\dfrac{1}{a-\zeta_j}$, where $0<|q_j|<1$ for
$1\le j\le n-1$.

\item $\mu:=\dfrac{1}{n-1}\sum_{j=1}^{n-1}q_j=x+iy$, where $x,y\in\R$.

\item $w_j:=q_j-\mu$ for $1\le j\le n-1$. Then
$\sum_{j=1}^{n-1}w_j=0$.

\item $s:=\dfrac{1}{n-1}\sum_{j=1}^{n-1}|q_j|^2$, where $0<s<1$.

\item $v:=\dfrac{1}{n-1}\sum_{j=1}^{n-1}|w_j|^2=s-|\mu|^2$.

\item $V$ denotes a specified upper bound for $V_*$ with $0\le V\le1$.
Its default value is $V_n$ from \eqref{eq:critical-moments}; individual
arguments may use $V=1$ or a sharper bound.

\item $\rho:=2ax-a^2s$.

\item $\beta_s(t)=1-2axt+a^2st^2$, where $0\le t\le1$.

\item $F(t):=\prod_{j=1}^{n-1}(1-atq_j)$.

\item $J:=\prod_{j=1}^{n-1}z_jq_j$.

\item $e_k(\bm \xi):=\sum_{1\le j_1<\cdots<j_k\le n-1}
\xi_{j_1}\cdots \xi_{j_k}$ for $\bm \xi=(\xi_1,\ldots,\xi_{n-1})\in\mathbb C^{n-1}$, and $e_0(\bm \xi):=1$.
Empty sums are zero and empty products are one.

\item $u_+:=\max\{u,0\}$ for $u\in\R$.

\end{itemize}

\section{Reciprocal moments and polynomial identities}\label{sec:reduction}

\begin{lem}\label{lem:moments}
Every normalized counterexample satisfies
\begin{align}
 \rho&\ge |\mu|^2+(1-V)(1-s),&
 |\mu-a|^2&\le(a^2+V-1)(1-s),\label{eq:moment-a}\\
 |1-a\mu|^2&\le Vv,&
 s(a^2+V)&\ge1,\label{eq:moment-b}\\
 \beta_s(1)&\le(a^2+V)v.&&\label{eq:moment-end}
\end{align}
\end{lem}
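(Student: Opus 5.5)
The plan is to pass from the critical points to the reciprocals $r_j:=1/q_j=a-\zeta_j$ and translate \eqref{eq:critical-moments} into moment information on the $r_j$. Then each inequality of Lemma~\ref{lem:moments} should come from one application of Cauchy--Schwarz pairing the $q_j$ with the $r_j$, or from an algebraic rearrangement of one already proved. From \eqref{eq:critical-moments}, $\frac1{n-1}\sum r_j=a$, and
\[
\frac1{n-1}\sum_{j}|r_j-a|^2=V_*\le V,
\qquad\text{hence}\qquad
\frac1{n-1}\sum_j|r_j|^2=a^2+V_*\le a^2+V .
\]
The counterexample hypothesis gives $|r_j|>1$ and $|q_j|<1$, and always $q_jr_j=1$.

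\emph{The two inequalities in \eqref{eq:moment-b}.} Averaging $q_jr_j=1$ and applying Cauchy--Schwarz gives $1\le s\cdot\frac1{n-1}\sum|r_j|^2\le s(a^2+V)$. For the centered version, since both $w_j$ and $r_j-a$ have mean zero,
\[
\frac1{n-1}\sum_j w_j(r_j-a)=\frac1{n-1}\sum_j q_jr_j-\mu a=1-a\mu .
\]
Cauchy--Schwarz then yields $|1-a\mu|^2\le v\,V_*\le Vv$.

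\emph{Inequality \eqref{eq:moment-end}.} Since $a$ is real, $\beta_s(1)=1-2ax+a^2s$ and $|1-a\mu|^2=1-2ax+a^2|\mu|^2$, so
\[
\beta_s(1)=|1-a\mu|^2+a^2v\le (V+a^2)v
\]
by the previous step.

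\emph{The second inequality of \eqref{eq:moment-a}.} This is where $|q_j|<1$ enters, and I expect it to be the main point. Note that $r_j-\bar q_j=r_j(1-|q_j|^2)$ and that its mean is $a-\bar\mu$. Apply weighted Cauchy--Schwarz with the nonnegative weights $1-|q_j|^2$:
\[
\Bigl|\sum_j r_j(1-|q_j|^2)\Bigr|^2\le\sum_j|r_j|^2(1-|q_j|^2)\cdot\sum_j(1-|q_j|^2).
\]
Here $|r_j|^2(1-|q_j|^2)=|r_j|^2-1$, so the first factor is at most $(n-1)(a^2+V-1)$, and the second equals $(n-1)(1-s)$. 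Since $a\in\R$, we have $|a-\bar\mu|=|\mu-a|$, which gives $|\mu-a|^2\le(a^2+V-1)(1-s)$.

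\emph{The first inequality of \eqref{eq:moment-a}.} Expand $|\mu-a|^2=a^2-2ax+|\mu|^2$ and write $(a^2+V-1)(1-s)=a^2-a^2s-(1-V)(1-s)$. Rearranging the previous inequality gives
\[
2ax-a^2s\ge|\mu|^2+(1-V)(1-s),
\]
which is exactly $\rho\ge|\mu|^2+(1-V)(1-s)$.
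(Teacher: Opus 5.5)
Your proof is correct and follows the paper's argument. The inequalities in \eqref{eq:moment-b} and \eqref{eq:moment-end} come from the same Cauchy--Schwarz steps and the same rearrangement. For \eqref{eq:moment-a}, you apply the weighted Cauchy--Schwarz to $r_j=a-\zeta_j$ and derive the first inequality from the second, while the paper applies it to $\overline{\zeta_j}$ via $\mu-as$ and derives the second from the first; because the weighted variance is unchanged by the constant shift by $a$, these are the same inequality.
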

\begin{proof}
Since $q_j=(a-\overline{\zeta_j})|q_j|^2$ and $\sum_{j=1}^{n-1}\zeta_j=0$,
$$\mu-as=\frac{1}{n-1}\sum_{j=1}^{n-1}\overline{\zeta_j}(1-|q_j|^2).$$
Applying the  Cauchy--Schwarz inequality, we obtain
\[
\begin{aligned}
|\mu-as|^2
&\le
\left(
\frac1{n-1}\sum_{j=1}^{n-1}
|\zeta_j|^2(1-|q_j|^2)
\right)
\left(
\frac1{n-1}\sum_{j=1}^{n-1}
(1-|q_j|^2)
\right)\\
&=
\bigl(V_*-\beta_s(1)\bigr)(1-s)\\
&\le
\bigl(V-\beta_s(1)\bigr)(1-s),
\end{aligned}
\]
which is equivalent to the first inequality in \eqref{eq:moment-a}. 

Since $
|\mu-a|^2
=|\mu|^2-\rho+a^2(1-s)$,
the first inequality in \eqref{eq:moment-a} yields
\[
|\mu-a|^2
\le (a^2+V-1)(1-s),
\]
which proves the second inequality in \eqref{eq:moment-a}.

Since $(a-\zeta_j)q_j=1$ and
$\sum_{j=1}^{n-1}\zeta_j=0$, we have
\[
a\mu-1
=
\frac1{n-1}\sum_{j=1}^{n-1}\zeta_j(q_j-\mu).
\]
Hence, by the Cauchy--Schwarz inequality,
\[
\begin{aligned}
|1-a\mu|^2
&\le
\left(
\frac1{n-1}\sum_{j=1}^{n-1}|\zeta_j|^2
\right)
\left(
\frac1{n-1}\sum_{j=1}^{n-1}|q_j-\mu|^2
\right)\\
&=V_*v
\le Vv,
\end{aligned}
\] which proves the first inequality in \eqref{eq:moment-b}. 

Since $q_j(a-\zeta_j)=1$, the Cauchy--Schwarz inequality gives
\[
\begin{aligned}
1
&=
\left|
\frac1{n-1}\sum_{j=1}^{n-1}q_j(a-\zeta_j)
\right|^2\\
&\le
\left(
\frac1{n-1}\sum_{j=1}^{n-1}|q_j|^2
\right)
\left(
\frac1{n-1}\sum_{j=1}^{n-1}|a-\zeta_j|^2
\right)\\
&=
s(a^2+V_*)
\qquad\text{since }\sum_{j=1}^{n-1}\zeta_j=0\\
&\le s(a^2+V),
\end{aligned}
\]
which proves the second inequality in \eqref{eq:moment-b}.

Finally, since $v=s-|\mu|^2$ and $\mu=x+iy$,
\[
\begin{aligned}
\beta_s(1)
&=1-2ax+a^2s\\
&=|1-a\mu|^2+a^2v\\
&\le (a^2+V)v
\qquad\text{by the first inequality in \eqref{eq:moment-b}},
\end{aligned}
\]
which proves \eqref{eq:moment-end}.
\end{proof}

\begin{lem}\label{lem:identities}
For every normalized counterexample, define
\[
 \mathcal R
 :=
 a^2\int_0^1 t
 \sum_{j=1}^{n-1}q_j^2
 \prod_{\substack{1\le k\le n-1\\k\ne j}}
 (1-atq_k)\dd t.
\] Then
\begin{equation}\label{eq:origin}
(-1)^{n-1}J
=
n\int_0^1F(t)\dd t.
\end{equation}
\begin{equation}\label{eq:differential}
F(1)+(n-1)a\mu\int_0^1F(t)\dd t+\mathcal R
=
1.
\end{equation}
Moreover,
\begin{equation}\label{eq:center-identity}
(-1)^{n-1}a\mu J+(1-a\mu)^n
-na\mu\int_0^1
\bigl(F(t)-(1-a\mu t)^{n-1}\bigr)\dd t=1.
\end{equation}
\end{lem}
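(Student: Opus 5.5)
The plan is to derive all three identities from the factorization $P'(z)=n\prod_{j=1}^{n-1}(z-\zeta_j)$, reparametrized along the segment from $a$ to $0$. \eqref{eq:origin} comes from the fundamental theorem of calculus on that segment, \eqref{eq:differential} from integrating a formula for $F'$, and \eqref{eq:center-identity} from combining \eqref{eq:origin} with an explicit integral. Everything is an exact polynomial identity, so none of the counterexample inequalities are needed; we only use $q_j\neq0$, i.e.\ $a\neq\zeta_j$.

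\emph{Step 1: the substitution.} Put $z=a(1-t)$ for $t\in[0,1]$. Since $q_j(a-\zeta_j)=1$, we get $z-\zeta_j=(a-\zeta_j)-at=(1-atq_j)/q_j$. Hence
\[
P'(a(1-t))=n\Bigl(\prod_{j=1}^{n-1}q_j\Bigr)^{-1}F(t).
\]

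\emph{Step 2: proof of \eqref{eq:origin}.} Write $P(z)=(z-a)Q(z)$ with $Q(z)=\prod_{j=1}^{n-1}(z-z_j)$. Then $P(a)=0$ and $P(0)=-aQ(0)=-a(-1)^{n-1}\prod_j z_j$. The fundamental theorem of calculus and Step 1 give
\[
-P(0)=P(a)-P(0)=\int_0^a P'(z)\dd z=a\int_0^1P'(a(1-t))\dd t=\frac{na}{\prod_j q_j}\int_0^1F(t)\dd t .
\]
Equating the two expressions for $-P(0)$, multiplying by $\prod_j q_j$, and dividing by $a>0$ yields $(-1)^{n-1}J=n\int_0^1F$.

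\emph{Step 3: proof of \eqref{eq:differential}.} Let $G_j(t)=\prod_{k\ne j}(1-atq_k)$. Then $F'(t)=-a\sum_j q_jG_j(t)$. Split each term using $1=(1-atq_j)+atq_j$:
\[
q_jG_j=q_jF+atq_j^2G_j .
\]
Summing over $j$ gives
\[
-F'(t)=(n-1)a\mu F(t)+a^2t\sum_jq_j^2G_j(t).
\]
Integrate over $[0,1]$ and use $F(0)=1$. The left side becomes $1-F(1)$ and the last term becomes exactly $\mathcal R$, which is \eqref{eq:differential}.

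\emph{Step 4: proof of \eqref{eq:center-identity}.} Multiply \eqref{eq:origin} by $a\mu$ to get $(-1)^{n-1}a\mu J=na\mu\int_0^1F$. Separately, as a polynomial identity in $a\mu$,
\[
na\mu\int_0^1(1-a\mu t)^{n-1}\dd t=1-(1-a\mu)^n,
\]
which also covers the case $\mu=0$. Substituting both into the left-hand side of \eqref{eq:center-identity}, the $\int F$ terms cancel and the powers of $(1-a\mu)$ cancel, leaving $1$.

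The only point needing care is sign and normalization bookkeeping: the factor $n$ in $P'$, and the sign $(-1)^{n-1}$ coming from $Q(0)$ in Step 2.
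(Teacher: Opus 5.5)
Your proposal is correct, and Step~4 is exactly the paper's argument for \eqref{eq:center-identity}: multiply \eqref{eq:origin} by $a\mu$ and use $na\mu\int_0^1(1-a\mu t)^{n-1}\dd t=1-(1-a\mu)^n$. For \eqref{eq:origin} and \eqref{eq:differential} the paper gives no derivation and only cites \cite[Lemma~6(iii)]{Tao26} and \cite[Lemma~3.1]{Zha26b}; your two self-contained arguments correctly supply what the paper takes from those references, using only the definitions and $a>0$. These are the fundamental theorem of calculus along $z=a(1-t)$ via $P'(a(1-t))=nF(t)/\prod_j q_j$, and integrating $-F'(t)=(n-1)a\mu F(t)+a^2t\sum_j q_j^2\prod_{k\ne j}(1-atq_k)$ over $[0,1]$.
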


\begin{proof}
For the identity \eqref{eq:origin}, see
\cite[Lemma~6(iii), equation~(9)]{Tao26} or
\cite[Lemma~3.1(i)]{Zha26b}.

For the differential identity \eqref{eq:differential}, see
\cite[Lemma~3.1(iii)]{Zha26b}, with \(x+iy=\mu\) and \(R=\mathcal R\).

It remains only to prove \eqref{eq:center-identity}.  Since
\[
na\mu\int_0^1(1-a\mu t)^{n-1}\dd t
=
1-(1-a\mu)^n,
\]
where the identity remains valid when \(a\mu=0\), \eqref{eq:origin} gives
\[
\begin{aligned}
(-1)^{n-1}a\mu J
&=
na\mu\int_0^1F(t)\dd t\\
&=
na\mu\int_0^1
\bigl(F(t)-(1-a\mu t)^{n-1}\bigr)\dd t
+1-(1-a\mu)^n.
\end{aligned}
\]
Rearranging yields \eqref{eq:center-identity}.
\end{proof}

We shall use both a root product bound and a critical-point product bound.

\begin{lem}\label{lem:basic-product}
Define
\[
 \Phi_{n-1}(a)=a\left(1+\frac{1-a^2}{n-1}\right)^{(n-1)/2},
 \qquad \psi(a)=a\exp\left(\frac{1-a^2}{2}\right).
\]
Every normalized counterexample satisfies
\begin{equation}\label{eq:core}
|a\mu J|
\le
\Phi_{n-1}(a)|\mu|\prod_{j=1}^{n-1}|q_j|,\qquad
\text{where }
\Phi_{n-1}(a)\le\min\{1,\psi(a)\}.
\end{equation}

\begin{equation}\label{eq:product-basic}
\left[
\frac{n}{n-1}(1+a^2)s
\right]^{n-1}
\ge n^2.
\end{equation}
For fixed $a$, $\Phi_{n-1}(a)$ is nondecreasing in $n$ on its domain $n-1\ge a^2$. For fixed $n$, it increases up to $a=1$ and decreases thereafter.
\end{lem}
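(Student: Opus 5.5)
The bound \eqref{eq:core} comes from AM--GM on the roots, \eqref{eq:product-basic} from AM--GM on the critical points combined with $P'(a)$, and the rest is one-variable calculus.

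\textbf{Step 1: the root product bound.} By definition $J=\prod_{j}z_jq_j$, so
\[
|a\mu J|=a|\mu|\prod_{j=1}^{n-1}|z_j|\prod_{j=1}^{n-1}|q_j|.
\]
Hence \eqref{eq:core} is equivalent to $a\prod_{j}|z_j|\le\Phi_{n-1}(a)$. The normalization \eqref{eq:normalization} gives $\sum_{j}|z_j|^2=n-a^2$. AM--GM on the $n-1$ numbers $|z_j|^2$ then yields
\[
\prod_{j=1}^{n-1}|z_j|^2\le\Bigl(\frac{n-a^2}{n-1}\Bigr)^{n-1}=\Bigl(1+\frac{1-a^2}{n-1}\Bigr)^{n-1}.
\]
Taking square roots and multiplying by $a$ gives exactly $\Phi_{n-1}(a)$. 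The domain condition $n-1\ge a^2$ is the bound $a^2\le n-1$ already noted in Section~\ref{sec:notation}.

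\textbf{Step 2: $\Phi_{n-1}\le\min\{1,\psi\}$ and monotonicity.} Write $m=n-1$ and $u=1-a^2$, so that $1+u/m\ge0$. The elementary inequality $(1+u/m)^{m}\le e^{u}$ gives $\Phi_{m}(a)\le\psi(a)$. Next,
\[
\frac{d}{da}\log\psi(a)=\frac1a-a,
\]
so $\psi$ attains its maximum at $a=1$, where $\psi(1)=1$; thus $\psi\le1$. For fixed $a$, monotonicity in $n$ is the classical fact that $m\mapsto(1+u/m)^m$ is nondecreasing wherever $1+u/m\ge 0$. To see it, differentiate $m\log(1+u/m)$ in $m$; the derivative is $\log(1+h)-h/(1+h)\ge0$ with $h=u/m$. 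For fixed $n$,
\[
\frac{d}{da}\log\Phi_m(a)=\frac1a-\frac{ma}{m+1-a^2}.
\]
Its sign is that of $m+1-a^2-ma^2=(m+1)(1-a^2)$, so $\Phi_m$ increases up to $a=1$ and decreases afterwards.

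\textbf{Step 3: the critical-point product bound \eqref{eq:product-basic}.} Since $P'(z)=n\prod_j(z-\zeta_j)$ and $P(z)=(z-a)\prod_j(z-z_j)$, we have
\[
P'(a)=\prod_{j}(a-z_j)=n\prod_{j}(a-\zeta_j),
\]
hence $\prod_j|q_j|^2=n^2/\prod_j|a-z_j|^2$. Using $\sum_j z_j=-a$ and $\sum_j|z_j|^2=n-a^2$,
\[
\sum_{j}|a-z_j|^2=(n-1)a^2+2a^2+n-a^2=n(1+a^2).
\]
AM--GM then gives
\[
\prod_j|a-z_j|^2\le\Bigl(\frac{n(1+a^2)}{n-1}\Bigr)^{n-1}.
\]
AM--GM applied to the $|q_j|^2$ gives $s^{n-1}\ge\prod_j|q_j|^2$. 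Combining,
\[
s^{n-1}\ge n^2\Bigl(\frac{n-1}{n(1+a^2)}\Bigr)^{n-1},
\]
which rearranges to \eqref{eq:product-basic}.

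No step is a genuine obstacle. The only point needing care is Step~2: the monotonicity in $n$ must be checked also for $u<0$, which holds because $1+u/m\ge0$ on the stated domain.
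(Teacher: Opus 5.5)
Your proposal is correct and follows the paper's proof essentially step for step: AM--GM on the $|z_j|^2$ for \eqref{eq:core}, $\log(1+u)\le u$ for $\Phi_{n-1}\le\psi$, the same logarithmic derivatives in $m$ and in $a$ for the two monotonicity claims, and $P'(a)=\prod_j(a-z_j)=n\prod_j(a-\zeta_j)$ combined with AM--GM twice for \eqref{eq:product-basic}. The only deviation is that you get $\Phi_{n-1}(a)\le 1$ from $\Phi_{n-1}\le\psi\le 1$, which also shows the minimum in \eqref{eq:core} always equals $\psi(a)$; the paper instead applies AM--GM directly to the $n$ numbers $a^2,\frac{n-a^2}{n-1},\dots,\frac{n-a^2}{n-1}$.
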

\begin{proof}
We first prove \eqref{eq:core}.  By the normalization
\eqref{eq:normalization},
$
\sum_{j=1}^{n-1}|z_j|^2=n-a^2.
$
Hence the arithmetic--geometric mean inequality gives
\[
\prod_{j=1}^{n-1}|z_j|
\le
\left(
\frac{1}{n-1}\sum_{j=1}^{n-1}|z_j|^2
\right)^{(n-1)/2}
=
\left(\frac{n-a^2}{n-1}\right)^{(n-1)/2}.
\]
Multiplying by \(a\), we obtain
\begin{equation}\label{eq:root-product-upper}
a\prod_{j=1}^{n-1}|z_j|
\le
a\left(\frac{n-a^2}{n-1}\right)^{(n-1)/2}
=
a\left(1+\frac{1-a^2}{n-1}\right)^{(n-1)/2}
=
\Phi_{n-1}(a).
\end{equation}

We next bound \(\Phi_{n-1}(a)\).  Applying the arithmetic--geometric
mean inequality to the \(n\) nonnegative numbers
\[
a^2,\quad
\frac{n-a^2}{n-1},\quad\ldots,\quad
\frac{n-a^2}{n-1}
\]
gives
\[
a^2
\left(\frac{n-a^2}{n-1}\right)^{n-1}
\le1.
\]
The left-hand side is precisely \(\Phi_{n-1}(a)^2\), and hence
$
\Phi_{n-1}(a)\le1.
$

On the other hand, since
$
1+\frac{1-a^2}{n-1}>0,
$
the elementary inequality \(\log(1+u)\le u\) when \(u>-1\)
gives
\[
\begin{aligned}
\log\Phi_{n-1}(a)
&=
\log a+
\frac{n-1}{2}
\log\left(1+\frac{1-a^2}{n-1}\right)\\
&\le
\log a+\frac{1-a^2}{2}
=
\log\psi(a).
\end{aligned}
\]
Thus
$
\Phi_{n-1}(a)\le\psi(a),
$
and consequently
\[
\Phi_{n-1}(a)\le\min\{1,\psi(a)\}.
\]
This gives the second inequality in \eqref{eq:core}.

Finally, by the definition of \(J\),
\[
|a\mu J|
=
a|\mu|
\prod_{j=1}^{n-1}|z_j|
\prod_{j=1}^{n-1}|q_j|.
\]
Combining this identity with \eqref{eq:root-product-upper} yields
\[
|a\mu J|
\le
\Phi_{n-1}(a)|\mu|
\prod_{j=1}^{n-1}|q_j|,
\]
which proves the first inequality in \eqref{eq:core}.

We now prove \eqref{eq:product-basic}.  Using \(\sum_{j=1}^{n-1}z_j=-a\) and
\(\sum_{j=1}^{n-1}|z_j|^2=n-a^2\), we obtain
\[
\sum_{j=1}^{n-1}|a-z_j|^2=
n(1+a^2).
\]
Thus
\[
\frac1{n-1}\sum_{j=1}^{n-1}|a-z_j|^2
=
\frac{n(1+a^2)}{n-1}.
\]
The arithmetic--geometric mean inequality gives
\begin{equation}\label{eq:u-product-upper}
\prod_{j=1}^{n-1}|a-z_j|^2
\le
\left(
\frac{n(1+a^2)}{n-1}
\right)^{n-1}.
\end{equation}

Recall
$
P(z)=(z-a)\prod_{j=1}^{n-1}(z-z_j).
$
Then
\[
P'(a)=\prod_{j=1}^{n-1}(a-z_j).
\]
On the other hand,
$
P'(z)=n\prod_{j=1}^{n-1}(z-\zeta_j),
$
and therefore, using \(q_j=(a-\zeta_j)^{-1}\),
\[
|P'(a)|^2
=
n^2\prod_{j=1}^{n-1}|a-\zeta_j|^2
=
\frac{n^2}{\prod_{j=1}^{n-1}|q_j|^2}.
\]
It follows that
\begin{equation}\label{eq:u-q-product}
\prod_{j=1}^{n-1}|a-z_j|^2
=
\frac{n^2}{\prod_{j=1}^{n-1}|q_j|^2}.
\end{equation}
Moreover, by the arithmetic--geometric mean inequality and the
definition of \(s\),
\begin{equation}\label{eq:q-product-upper}
\prod_{j=1}^{n-1}|q_j|^2
\le
\left(
\frac1{n-1}\sum_{j=1}^{n-1}|q_j|^2
\right)^{n-1}
=
s^{n-1}.
\end{equation}
Combining \eqref{eq:u-product-upper}, \eqref{eq:u-q-product}, and
\eqref{eq:q-product-upper} gives
\[
\frac{n^2}{s^{n-1}}
\le
\frac{n^2}{\prod_{j=1}^{n-1}|q_j|^2}
=
\prod_{j=1}^{n-1}|a-z_j|^2
\le
\left(
\frac{n(1+a^2)}{n-1}
\right)^{n-1}.
\]
Therefore
\[
\left[
\frac{n}{n-1}(1+a^2)s
\right]^{n-1}
\ge n^2,
\]
which proves \eqref{eq:product-basic}.

It remains to verify the two monotonicity assertions.  Fix \(a>0\)
and write
$
m=n-1,
$ and $
b=1-a^2.
$
Then
\[
\log\Phi_m(a)
=
\log a+\frac m2\log\left(1+\frac bm\right).
\]
Regarding \(m\) as a continuous variable, differentiation gives
\[
\frac{\partial}{\partial m}\log\Phi_m(a)
=
\frac12
\left[
\log\left(1+\frac bm\right)
-
\frac{b/m}{1+b/m}
\right].
\]
Notice that for every \(u>-1\), 
\[
h(u)=\log(1+u)-\frac{u}{1+u}\ge 0.
\]
Thus
\[
\frac{\partial}{\partial m}\log\Phi_m(a)\ge0,
\]
and hence, for fixed \(a\), \(\Phi_{n-1}(a)\) is nondecreasing in
\(n\) throughout its domain.

Finally, for fixed \(n\),
\[
\frac{\dd}{\dd a}\log\Phi_{n-1}(a)
=
\frac{n(1-a^2)}{a(n-a^2)}.
\]
Since \(a>0\) and \(a^2\le n-1<n\), the denominator is positive.
Thus the derivative has the sign of \(1-a^2\).  Therefore
\(\Phi_{n-1}(a)\) is increasing for \(0<a\le1\) and decreasing for
\(1\le a\le\sqrt{n-1}\), as claimed.
\end{proof}

\section{Integral estimates}\label{sec:integral}

The estimates in this section will be used twice: first with simple exponential majorants, and then on rational intervals. Write
\begin{equation}\label{eq:constants}
 c_n=\left(\frac{n-1}{n-2}\right)^{(n-2)/2},\quad
 \widetilde c_n=c_n^2,\qquad
 d_n=\left(\frac{n-1}{n-3}\right)^{(n-3)/2},\quad
 \widetilde d_n=d_n^2.
\end{equation}
For $n\ge4$, one has $c_n<5/3$, $\widetilde c_n<3$, $d_n<3$, and $\widetilde d_n<9$.

\subsection{Quadratic and linear majorants}

For the second moment, let \(b:[0,1]\to[0,\infty)\) be any convex function such that
\[
\beta_s(t)\le b(t)
\qquad (0\le t\le1).
\]
Since
\(\beta_s(t)=(n-1)^{-1}\sum_{j=1}^{n-1}|1-atq_j|^2\),
we have
\begin{equation}\label{eq:quadratic-majorant}
\frac1{n-1}\sum_{j=1}^{n-1}|1-atq_j|^2
\le b(t).
\end{equation}

\medskip
\noindent\textbf{Our goal.}
We need to seek a simple linear function \(\ell(t)\) such that
\begin{equation}\label{eq:linear-majorant-aim}
\frac1{n-1}\sum_{j=1}^{n-1}|1-atq_j|
\le \ell(t),
\qquad 0\le t\le1.
\end{equation}

\step{Step 1. Estimate the endpoint \(t=1\).}
By the Cauchy--Schwarz inequality,
\[
\left(
\frac1{n-1}\sum_{j=1}^{n-1}|1-aq_j|
\right)^2
\le
\frac1{n-1}\sum_{j=1}^{n-1}|1-aq_j|^2
=
\beta_s(1).
\]
On the other hand, since
$
1-aq_j=-\zeta_jq_j,
$
 the Cauchy--Schwarz inequality gives
\[
\begin{aligned}
\frac1{n-1}\sum_{j=1}^{n-1}|1-aq_j|
&=
\frac1{n-1}\sum_{j=1}^{n-1}|\zeta_jq_j|\\
&\le
\left(
\frac1{n-1}\sum_{j=1}^{n-1}|\zeta_j|^2
\right)^{1/2}
\left(
\frac1{n-1}\sum_{j=1}^{n-1}|q_j|^2
\right)^{1/2}\\
&=
\sqrt{V_*s}\\
&\le
\sqrt{Vs}.
\end{aligned}
\]
Therefore
\begin{equation}\label{eq:endpoint-first-moment}
\frac1{n-1}\sum_{j=1}^{n-1}|1-aq_j|
\le
\sqrt{\min\{\beta_s(1),Vs\}}.
\end{equation}

\step{Step 2. Choose the endpoint bound \(\kappa\).}
Choose \(\kappa\in[0,1]\) such that
\begin{equation}\label{eq:kappa}
\kappa^2\ge\min\{\beta_s(1),Vs\}.
\end{equation}
Then \eqref{eq:endpoint-first-moment} implies
\[
\frac1{n-1}\sum_{j=1}^{n-1}|1-aq_j|
\le\kappa.
\]
Define
$
\ell(t)=1-(1-\kappa)t.
$
Then \(\ell(0)=1\) and \(\ell(1)=\kappa\).

\step{Step 3. Interpolate between \(t=0\) and \(t=1\).}
Since
\[
1-atq_j=(1-t)+t(1-aq_j),
\]
the triangle inequality gives
$
|1-atq_j|
\le
(1-t)+t|1-aq_j|.
$
Averaging over \(j\), we obtain
\[
\begin{aligned}
\frac1{n-1}\sum_{j=1}^{n-1}|1-atq_j|
&\le
(1-t)+
t\frac1{n-1}\sum_{j=1}^{n-1}|1-aq_j|\\
&\le
(1-t)+t\kappa\\
&=
\ell(t),
\end{aligned}
\]
which proves \eqref{eq:linear-majorant-aim}.

In view of \eqref{eq:quadratic-majorant} and \eqref{eq:linear-majorant-aim}, \(b(t)\) controls the averaged second moment of the factors
\(1-atq_j\), while \(\ell(t)\) controls their averaged moduli.

\subsection{Products with one factor omitted}
In Lemma~\ref{lem:direct}, the proof combines two estimates that originate in the arguments of
Tao and Zhang.  The bound with coefficient \(1\) is based on the
Maclaurin's inequality used in  
\cite[Proposition~11]{Tao26}, while the second-moment bound refines the omitted-factor
estimate underlying \cite[Lemma~3.2]{Zha26b} by retaining the factor
\(s\).  We shall also use a first-moment analogue and an interval
refinement in which the omitted factor is retained quantitatively.

\begin{lem}\label{lem:direct}
Let
$\mathcal A(t)=\sum_j|q_j|^2\prod_{k\ne j}|1-atq_k|$.
Then
\begin{equation}\label{eq:direct-bounds}
 \mathcal A(t)\le(n-1)\min\left\{
 \min\{1,c_n s\}\,b(t)^{(n-2)/2},\,
 \min\{1,\widetilde c_n s\}\,\ell(t)^{n-2}
 \right\}.
\end{equation}
If $I=[l,u]\subset[0,1]$, $a\le a_+$, and $\delta_I=\max\{0,1-a_+u\}$, then, for all $t\in I$,
\begin{equation}\label{eq:retained}
 \mathcal A(t)\le(n-1)s\min\left\{
 \left(\frac{(n-1)b(t)-\delta_I^2}{n-2}\right)_{+}^{(n-2)/2},\,
 \left(\frac{(n-1)\ell(t)-\delta_I}{n-2}\right)_{+}^{n-2}
 \right\}.
\end{equation}
\end{lem}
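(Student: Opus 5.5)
Write $r_k=|1-atq_k|$ and $P_j=\prod_{k\ne j}r_k$, so that $\mathcal A(t)=\sum_j|q_j|^2P_j$. The plan has three parts: bound each omitted-factor product $P_j$ by AM--GM using the majorants \eqref{eq:quadratic-majorant} and \eqref{eq:linear-majorant-aim}, then sum over $j$ with weights $|q_j|^2$, and finally obtain the coefficient-$1$ versions from a Maclaurin-type argument.

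\emph{The factor-$s$ bounds.} Apply AM--GM to the $n-2$ numbers $r_k^2$, $k\ne j$:
\[
P_j\le\left(\frac{\sum_{k\ne j}r_k^2}{n-2}\right)^{(n-2)/2}\le\left(\frac{(n-1)b(t)-r_j^2}{n-2}\right)^{(n-2)/2}.
\]
Dropping $r_j^2\ge0$ gives $P_j\le\bigl(\tfrac{n-1}{n-2}\bigr)^{(n-2)/2}b(t)^{(n-2)/2}=c_n\,b(t)^{(n-2)/2}$. Summing with weights $|q_j|^2$ and using $\sum_j|q_j|^2=(n-1)s$ yields $\mathcal A(t)\le(n-1)c_ns\,b(t)^{(n-2)/2}$. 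The first-moment version is the same argument with $\sum_{k\ne j}r_k\le(n-1)\ell(t)-r_j$ and exponent $n-2$. It produces the constant $\bigl(\tfrac{n-1}{n-2}\bigr)^{n-2}=\widetilde c_n$.

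\emph{The retained estimate \eqref{eq:retained}.} Here we keep $r_j$ rather than discarding it. Since $|q_j|<1$, $t\le u$ and $a\le a_+$, we have $r_j\ge1-at|q_j|\ge1-a_+u$, and also $r_j\ge0$. Hence $r_j\ge\delta_I$ and $r_j^2\ge\delta_I^2$. Substituting into the displayed AM--GM bound gives the quadratic term of \eqref{eq:retained} for each $j$; the positive part is harmless because the left side is nonnegative. The linear term follows in the same way. Summing over $j$ again produces the factor $(n-1)s$.

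\emph{The coefficient-$1$ bounds.} It remains to show $\mathcal A(t)\le(n-1)b(t)^{(n-2)/2}$ and $\mathcal A(t)\le(n-1)\ell(t)^{n-2}$. I expect this to be the main obstacle, since it needs a bound in which the weights $|q_j|^2$ are absorbed into the product rather than merely summed. The estimate I would use is
\[
|q_j|^2P_j\le|q_j|P_j=\frac{|q_j|}{r_j}\prod_k r_k,
\]
which reduces everything to bounding $\prod_k r_k$ against the full average. Following the Maclaurin argument of \cite[Proposition~11]{Tao26}, I would view $\sum_j|q_j|^2P_j$ as a weighted elementary symmetric function $e_{n-2}$ of the numbers $r_k$ (or $r_k^2$). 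Maclaurin's inequality gives
\[
\frac{e_{n-2}}{n-1}\le\left(\frac{e_1}{n-1}\right)^{n-2},
\]
and $e_1/(n-1)$ is at most $\ell(t)$, respectively at most $b(t)$ for the squares. The weights $|q_j|^2\le1$ can only decrease each term, so $\mathcal A(t)\le\sum_jP_j=e_{n-2}(r)$. Applying Maclaurin to the $r_k$ then gives $\mathcal A(t)\le(n-1)\ell(t)^{n-2}$. Applying it to the $r_k^2$ gives $\sum_jP_j^2\le(n-1)b(t)^{n-2}$, and combining with $\sum_jP_j\le\sqrt{(n-1)\sum_jP_j^2}$ by Cauchy--Schwarz yields $(n-1)b(t)^{(n-2)/2}$.

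Finally, each of the two expressions in \eqref{eq:direct-bounds} is bounded both with coefficient $1$ and with coefficient $c_ns$ (respectively $\widetilde c_ns$). Taking the minimum of the two coefficients in each case, and then the minimum of the two expressions, completes the proof.
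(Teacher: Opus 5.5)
Your proof is correct and follows essentially the same route as the paper. You use AM--GM over the $n-2$ retained factors, either dropping $r_j$ or bounding it below by $\delta_I$, for the $s$-weighted and retained bounds, and Maclaurin's inequality after discarding $|q_j|^2\le1$ for the coefficient-$1$ bounds. The only difference is cosmetic and lies in the $b$-branch: the paper applies Maclaurin to the $r_k$ and then Cauchy--Schwarz to their average, while you apply Maclaurin to the $r_k^2$ and Cauchy--Schwarz to $\sum_j P_j$. Your aside $|q_j|P_j=\frac{|q_j|}{r_j}\prod_k r_k$ is never used and should be dropped, since $r_j$ may vanish.
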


\begin{proof}
We first prove \eqref{eq:direct-bounds}.  Maclaurin's inequality gives
\begin{equation*}
\sum_{j=1}^{n-1}\prod_{k\ne j}|1-atq_k|
\le
(n-1)
\left(
\frac1{n-1}\sum_{j=1}^{n-1}|1-atq_j|
\right)^{n-2}.
\end{equation*}
Since \(|q_j|^2\le1\), it follows from
\eqref{eq:quadratic-majorant}, \eqref{eq:linear-majorant-aim}, and the
Cauchy--Schwarz inequality that
\begin{equation}\label{eq:direct-unit-bound}
\begin{aligned}
\mathcal A(t)
&\le
\sum_{j=1}^{n-1}\prod_{k\ne j}|1-atq_k|\\
&\le
(n-1)
\min\left\{
b(t)^{(n-2)/2},
\ell(t)^{n-2}
\right\}.
\end{aligned}
\end{equation}

We now retain the factor \(|q_j|^2\).  For each fixed \(j\), the
arithmetic--geometric mean inequality and \eqref{eq:quadratic-majorant}
give
\[
\begin{aligned}
\prod_{k\ne j}|1-atq_k|
&\le
\left(
\frac1{n-2}\sum_{k\ne j}|1-atq_k|^2
\right)^{(n-2)/2}\\
&\le
\left(
\frac{n-1}{n-2}b(t)
\right)^{(n-2)/2}\\
&=
c_n b(t)^{(n-2)/2}.
\end{aligned}
\]
Similarly, by \eqref{eq:linear-majorant-aim},
\[
\begin{aligned}
\prod_{k\ne j}|1-atq_k|
&\le
\left(
\frac1{n-2}\sum_{k\ne j}|1-atq_k|
\right)^{n-2}\\
&\le
\left(
\frac{n-1}{n-2}\ell(t)
\right)^{n-2}\\
&=
\widetilde c_n\ell(t)^{n-2}.
\end{aligned}
\]
Multiplying these two estimates by \(|q_j|^2\), summing over \(j\), and
using
$
\sum_{j=1}^{n-1}|q_j|^2=(n-1)s,
$
we obtain
\begin{equation}\label{eq:direct-second-moment-bound}
\mathcal A(t)
\le
(n-1)\min\left\{
c_ns\,b(t)^{(n-2)/2},
\widetilde c_ns\,\ell(t)^{n-2}
\right\}.
\end{equation}
Combining \eqref{eq:direct-unit-bound} and
\eqref{eq:direct-second-moment-bound} proves \eqref{eq:direct-bounds}.

It remains to prove \eqref{eq:retained}.  Suppose that \(t\in I=[l,u]\).
Since \(a\le a_+\), \(t\le u\), and \(|q_j|<1\), the reverse triangle
inequality gives
\[
 |1-atq_j|
 \ge (1-at|q_j|)_+
 \ge (1-a_+u)_+
 =\delta_I.
\]
Hence, for each fixed \(j\),
\[
\sum_{k\ne j}|1-atq_k|^2
\le
(n-1)b(t)-\delta_I^2,
\]
while
\[
\sum_{k\ne j}|1-atq_k|
\le
(n-1)\ell(t)-\delta_I.
\]
Applying the arithmetic--geometric mean inequality to the \(n-2\)
remaining factors therefore gives
\[
\prod_{k\ne j}|1-atq_k|
\le
\left(
\frac{(n-1)b(t)-\delta_I^2}{n-2}
\right)_{+}^{(n-2)/2}
\]
and
\[
\prod_{k\ne j}|1-atq_k|
\le
\left(
\frac{(n-1)\ell(t)-\delta_I}{n-2}
\right)_{+}^{n-2}.
\]
Finally, multiplying by \(|q_j|^2\), summing over \(j\), and again using
$
\sum_{j=1}^{n-1}|q_j|^2=(n-1)s,
$
yields \eqref{eq:retained}.
\end{proof}

The positive parts in \eqref{eq:retained} allow us to enlarge interval bounds without affecting their validity. On any interval that contains admissible data, the quantities inside the positive parts are already nonnegative. We therefore keep the lower bound \(\delta_I\) fixed throughout \(I\), rather than replacing it by a \(t\)-dependent quantity, which is important when convexity is later used to estimate the corresponding integrals.

\subsection{Centered interpolation}

In the following lemma, the first estimate refines the centered interpolation bound
\cite[Lemma~5.1]{Zha26b}, replacing the single second-moment majorant
there by the minimum of the quadratic and linear majorants introduced
above.  In the present setting, however, \(1-a\mu t\) may vanish.
We therefore also require a second, nonsingular estimate, obtained by
using the second derivative of the interpolation instead of dividing
by \(1-a\mu t\).

\begin{lem}\label{lem:interpolation}
For every $t\in[0,1]$ with $1-a\mu t\ne0$,
\begin{equation}\label{eq:interpolation-first}
 |F(t)-(1-a\mu t)^{n-1}|
 \le\frac{(n-1)a^2t^2v}{2|1-a\mu t|}
 \min\{c_n b(t)^{(n-2)/2},\widetilde c_n\ell(t)^{n-2}\}.
\end{equation}
For every $t\in[0,1]$, including zeros of $1-a\mu t$,
\begin{equation}\label{eq:interpolation-second}
 |F(t)-(1-a\mu t)^{n-1}|
 \le\frac{(n-1)(n-2)a^2t^2v}{2}
 \min\{d_n b(t)^{(n-3)/2},\widetilde d_n\ell(t)^{n-3}\}.
\end{equation}
\end{lem}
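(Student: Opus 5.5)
We interpolate between the centered configuration and the actual one. For $\lambda\in[0,1]$ set $q_j(\lambda)=\mu+\lambda w_j$ and
\[
G(\lambda)=\prod_{j=1}^{n-1}\bigl(1-at\,q_j(\lambda)\bigr)=\prod_{j=1}^{n-1}\bigl(1-a\mu t-\lambda at w_j\bigr),
\]
so that $G(0)=(1-a\mu t)^{n-1}$ and $G(1)=F(t)$. Writing $m=1-a\mu t$ and $\eta_j=-atw_j$, we have $\sum_j\eta_j=0$. Hence $G(\lambda)=\sum_k e_k(\bm\eta)\lambda^k m^{n-1-k}$ with $e_1(\bm\eta)=0$, and $G'(0)=m^{n-2}e_1(\bm\eta)=0$.

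For \eqref{eq:interpolation-second} we use Taylor's formula with integral remainder:
\[
F(t)-(1-a\mu t)^{n-1}=\int_0^1(1-\lambda)G''(\lambda)\dd\lambda .
\]
Here
\[
G''(\lambda)=\sum_{j\ne k}\eta_j\eta_k\prod_{i\ne j,k}\bigl(m+\lambda\eta_i\bigr).
\]
The factors $m+\lambda\eta_i=(1-\lambda)m+\lambda(1-atq_i)$ are convex combinations. So their moduli, and their squared moduli, are bounded by the corresponding convex combinations of the values at $\lambda=0$ and $\lambda=1$. For the squared moduli this gives an averaged bound
\[
\frac1{n-1}\sum_i|m+\lambda\eta_i|^2\le(1-\lambda)|m|^2+\lambda\beta_s(t)\le b(t),
\]
using $|m|^2\le\beta_s(t)$, which is Jensen's inequality, i.e.\ $\beta_s(t)=|m|^2+a^2t^2v$. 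For the moduli, the triangle inequality gives $|m|\le\frac1{n-1}\sum|1-atq_i|\le\ell(t)$. Now bound each $(n-3)$-fold product, with $n-3$ factors among $n-1$, by AM--GM: its contribution is at most $\bigl(\frac{n-1}{n-3}b\bigr)^{(n-3)/2}=d_nb^{(n-3)/2}$, or $\widetilde d_n\ell^{n-3}$ in the linear version. Then $\sum_{j\ne k}|\eta_j||\eta_k|\le(n-1)\sum|\eta_j|^2=(n-1)^2a^2t^2v$ by Cauchy--Schwarz.

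This naive count gives $(n-1)^2$ rather than $(n-1)(n-2)$. We expect this to be the main obstacle. The fix we would try first is to exploit $\sum_j\eta_j=0$. Write $\sum_{j\ne k}\eta_j\eta_k\Pi_{jk}$ and subtract a common product, or use the identity $\sum_{k\ne j}\eta_k=-\eta_j$. This should yield the effective bound $\bigl|\sum_{j\ne k}\eta_j\eta_k\Pi_{jk}\bigr|\le(n-2)\sum_j|\eta_j|^2\max\Pi$. Alternatively, keep $|\eta_j||\eta_k|\le\frac12(|\eta_j|^2+|\eta_k|^2)$, sum over $k\ne j$, which gives the factor $(n-2)$, and then use the AM--GM bound for each product uniformly. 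That route gives
\[
\sum_{j\ne k}|\eta_j\eta_k|\,\Pi\le(n-2)\sum_j|\eta_j|^2\,\Pi_{\max}=(n-1)(n-2)a^2t^2v\,\Pi_{\max}.
\]
Integrating $\int_0^1(1-\lambda)\dd\lambda=\tfrac12$ then gives \eqref{eq:interpolation-second}.

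For \eqref{eq:interpolation-first}, when $m\ne0$, we instead use a first-derivative formulation that divides by $m$. We have
\[
G'(\lambda)=\sum_j\eta_j\prod_{i\ne j}(m+\lambda\eta_i).
\]
Since $\sum_j\eta_j=0$, we may write
\[
G'(\lambda)=\sum_j\eta_j\Bigl(\prod_{i\ne j}(m+\lambda\eta_i)\Bigr)-\frac{G(\lambda)}{m}\sum_j\eta_j\cdot\frac{m}{m+\lambda\eta_j}\cdots
\]
A cleaner route is the identity
\[
m\,G'(\lambda)=\sum_j\eta_j\bigl(m-(m+\lambda\eta_j)\bigr)\prod_{i\ne j}(m+\lambda\eta_i)=-\lambda\sum_j\eta_j^2\prod_{i\ne j}(m+\lambda\eta_i),
\]
which uses $\sum_j\eta_jG(\lambda)=0$ in the form $\sum_j\eta_j\prod_{i}(m+\lambda\eta_i)=0$. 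Then
\[
|G'(\lambda)|\le\frac{\lambda}{|m|}\,(n-1)a^2t^2v\cdot\max_j\prod_{i\ne j}|m+\lambda\eta_i|.
\]
By AM--GM on $n-2$ factors with the same convexity majorants, this maximum is at most $c_nb^{(n-2)/2}$ or $\widetilde c_n\ell^{n-2}$. Integrating $\int_0^1\lambda\dd\lambda=\tfrac12$ gives \eqref{eq:interpolation-first}.
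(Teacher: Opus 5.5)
Your proposal is correct and follows the paper's proof essentially step by step. It uses the same interpolation $\prod_j(1-a\mu t-\lambda a t w_j)$, the same identity $(1-a\mu t)G'(\lambda)=-\lambda\sum_j\eta_j^2\prod_{i\ne j}(m+\lambda\eta_i)$ for \eqref{eq:interpolation-first}, Taylor's formula with the second derivative for \eqref{eq:interpolation-second}, and AM--GM on the omitted-factor products via the majorants $b$ and $\ell$.

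The ``obstacle'' you flag is not real. Once the diagonal is removed, Cauchy--Schwarz already gives $\sum_{j\ne k}|\eta_j\eta_k|=(\sum_j|\eta_j|)^2-\sum_j|\eta_j|^2\le(n-2)\sum_j|\eta_j|^2$, which is exactly what the paper uses. Your pairwise bound $|\eta_j\eta_k|\le\frac12(|\eta_j|^2+|\eta_k|^2)$ gives the same factor, so the speculative route through $\sum_j\eta_j=0$ is unnecessary.
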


\begin{proof}
\step{Step 1. The estimate with denominator.}
For $0\le\theta\le1$, define
$
G_\theta(t)
=
\prod_{j=1}^{n-1}
(1-a\mu t-at\theta w_j).
$
Since $q_j=\mu+w_j$, we have
\[
G_0(t)=(1-a\mu t)^{n-1}\quad \text{ and }\quad 
G_1(t)=F(t).
\]
Moreover,
\[
1-a\mu t-at\theta w_j
=
(1-\theta)(1-a\mu t)+\theta(1-atq_j).
\]

Using $\sum_{j=1}^{n-1}w_j=0$, a direct calculation gives
\[
\begin{aligned}
\frac1{n-1}\sum_{j=1}^{n-1}
|1-a\mu t-at\theta w_j|^2
&=
|1-a\mu t|^2+a^2t^2\theta^2v\\
&\le
|1-a\mu t|^2+a^2t^2v\\
&=
\beta_s(t)\\
&\le b(t).
\end{aligned}
\]
For the first absolute moment, the triangle inequality and
\eqref{eq:linear-majorant-aim} give
\[
\begin{aligned}
\frac1{n-1}\sum_{j=1}^{n-1}
|1-a\mu t-at\theta w_j|
&\le
(1-\theta)|1-a\mu t|
+\frac{\theta}{n-1}
\sum_{j=1}^{n-1}|1-atq_j|\\
&\le
\ell(t).
\end{aligned}
\]

Differentiating $G_\theta(t)$ with respect to $\theta$ gives
\[
\partial_\theta G_\theta(t)
=
-at\sum_{j=1}^{n-1}
w_j
\prod_{k\ne j}
(1-a\mu t-at\theta w_k).
\]
Multiplying by $1-a\mu t$ and using
\[
1-a\mu t
=
(1-a\mu t-at\theta w_j)+at\theta w_j
\]
together with $\sum_jw_j=0$, we obtain
\begin{equation}\label{eq:Gtheta-first}
(1-a\mu t)\,\partial_\theta G_\theta(t)
=
-a^2t^2\theta
\sum_{j=1}^{n-1}w_j^2
\prod_{k\ne j}
(1-a\mu t-at\theta w_k).
\end{equation}

The same arithmetic--geometric mean argument as in
Lemma~\ref{lem:direct}, now applied to the factors
$|1-a\mu t-at\theta w_k|$, gives
\[
\prod_{k\ne j}
|1-a\mu t-at\theta w_k|
\le
c_n b(t)^{(n-2)/2}
\]
and
\[
\prod_{k\ne j}
|1-a\mu t-at\theta w_k|
\le
\widetilde c_n\ell(t)^{n-2}.
\]
Since
$
\sum_{j=1}^{n-1}|w_j|^2=(n-1)v,
$
it follows from \eqref{eq:Gtheta-first} that
\[
|\partial_\theta G_\theta(t)|
\le
\frac{(n-1)a^2t^2\theta v}{|1-a\mu t|}
\min\{
c_n b(t)^{(n-2)/2},
\widetilde c_n\ell(t)^{n-2}
\}.
\]
Finally,
\[
F(t)-(1-a\mu t)^{n-1}
=
G_1(t)-G_0(t)
=
\int_0^1\partial_\theta G_\theta(t)\dd\theta.
\]
Integrating the preceding estimate and using
$
\int_0^1\theta\dd\theta=1/2
$
proves \eqref{eq:interpolation-first}.

\step{Step 2. The estimate without denominator.}
The argument in Step 1 is not useful when $1-a\mu t=0$.  To avoid
division by this quantity, we instead use the second derivative of the
interpolation.  Since
$
\partial_\theta G_0(t)=0
$
by $\sum_jw_j=0$, Taylor's formula with integral remainder gives
\begin{equation}\label{eq:Gtheta-Taylor}
G_1(t)-G_0(t)
=
\int_0^1(1-\theta)\,
\partial_\theta^2G_\theta(t)\dd\theta.
\end{equation}
Differentiating once more, we obtain
\begin{equation}\label{eq:Gtheta-second}
\partial_\theta^2G_\theta(t)
=
a^2t^2
\sum_{\substack{j,k=1\\ j\ne k}}^{n-1}
w_jw_k
\prod_{\substack{r=1\\ r\ne j,k}}^{n-1}
(1-a\mu t-at\theta w_r).
\end{equation}

For the ordered pairs in \eqref{eq:Gtheta-second}, the Cauchy--Schwarz inequality gives
\[
\begin{aligned}
\sum_{j\ne k}|w_jw_k|
&=
\left(\sum_{j=1}^{n-1}|w_j|\right)^2
-\sum_{j=1}^{n-1}|w_j|^2\\
&\le
(n-2)\sum_{j=1}^{n-1}|w_j|^2\\
&=
(n-1)(n-2)v.
\end{aligned}
\]
When two factors are omitted, the arithmetic--geometric mean inequality,
together with the bounds established in Step~1, gives
\[
\prod_{\substack{r=1\\ r\ne j,k}}^{n-1}
|1-a\mu t-at\theta w_r|
\le
d_n b(t)^{(n-3)/2}
\]
and
\[
\prod_{\substack{r=1\\ r\ne j,k}}^{n-1}
|1-a\mu t-at\theta w_r|
\le
\widetilde d_n\ell(t)^{n-3}.
\]
Consequently,
\[
|\partial_\theta^2G_\theta(t)|
\le
(n-1)(n-2)a^2t^2v
\min\{
d_n b(t)^{(n-3)/2},
\widetilde d_n\ell(t)^{n-3}
\}.
\]
Substituting this estimate into \eqref{eq:Gtheta-Taylor} and using
$
\int_0^1(1-\theta)\dd\theta=1/2
$
gives
\[
|G_1(t)-G_0(t)|
\le
\frac{(n-1)(n-2)a^2t^2v}{2}
\min\{
d_n b(t)^{(n-3)/2},
\widetilde d_n\ell(t)^{n-3}
\}.
\]
Since
$
G_1(t)=F(t)
$
and
$
G_0(t)=(1-a\mu t)^{n-1},
$
this is precisely \eqref{eq:interpolation-second}.
\end{proof}

\begin{lem}\label{lem:integral-tests}
Suppose that \(\mathcal C\) is an upper bound for \(|a\mu J|\), and let
\(\mathcal U:[0,1]\to[0,\infty)\) satisfy
\[
|F(t)-(1-a\mu t)^{n-1}|
\le
a^2t^2v\,\mathcal U(t),
\qquad 0\le t\le1.
\]
Set
\begin{equation}\label{eq:K-definition}
K
:=
na^3|\mu|
\int_0^1t^2\mathcal U(t)\dd t.
\end{equation}
Then every normalized counterexample satisfies
\begin{equation}\label{eq:direct-test}
1
\le
\kappa^{n-1}
+\frac{n-1}{n}\mathcal C
+a^2\int_0^1t\,\mathcal A(t)\dd t,
\end{equation}
and
\begin{equation}\label{eq:absolute-test}
1
\le
\mathcal C+(Vv)^{n/2}+Kv.
\end{equation}
Moreover,
\begin{equation}\label{eq:ratio-test}
1
\le
(1+|\mu|)
\left\{
V^{n/2}(1-|\mu|^2)^{(n-2)/2}+K
\right\}.
\end{equation}
\end{lem}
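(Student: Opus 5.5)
All three inequalities follow by applying the triangle inequality to the identities of Lemma~\ref{lem:identities} and bounding each resulting term with the estimates already established.

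\emph{Proof of \eqref{eq:direct-test}.} Start from the differential identity \eqref{eq:differential}, namely $1=F(1)+(n-1)a\mu\int_0^1F(t)\dd t+\mathcal R$, and estimate its three terms separately.
\begin{enumerate}
\item For the first term, $|F(1)|=\prod_j|1-aq_j|$. The arithmetic--geometric mean inequality bounds this by $\bigl(\frac1{n-1}\sum_j|1-aq_j|\bigr)^{n-1}$. Steps~1--2 of the construction of $\ell$ show that this average is at most $\kappa$, so $|F(1)|\le\kappa^{n-1}$.
\item For the middle term, \eqref{eq:origin} gives $(n-1)a\mu\int_0^1F\dd t=\frac{n-1}{n}(-1)^{n-1}a\mu J$, whose modulus is at most $\frac{n-1}{n}\mathcal C$.
\item For $\mathcal R$, we have $|q_j^2|=|q_j|^2$ and $|\prod_{k\ne j}(1-atq_k)|=\prod_{k\ne j}|1-atq_k|$. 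Hence $|\mathcal R|\le a^2\int_0^1t\,\mathcal A(t)\dd t$.
\end{enumerate}
Taking moduli in the identity and adding the three bounds gives \eqref{eq:direct-test}.

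\emph{Proof of \eqref{eq:absolute-test}.} Use the centered identity \eqref{eq:center-identity} in the same way. The $J$-term is bounded by $\mathcal C$. The first inequality of \eqref{eq:moment-b} gives $|1-a\mu|^n=(|1-a\mu|^2)^{n/2}\le(Vv)^{n/2}$. For the integral term, the hypothesis on $\mathcal U$ gives
\[
n a|\mu|\int_0^1|F(t)-(1-a\mu t)^{n-1}|\dd t\le na|\mu|\int_0^1a^2t^2v\,\mathcal U(t)\dd t=Kv .
\]
Summing the three bounds proves \eqref{eq:absolute-test}.

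\emph{Proof of \eqref{eq:ratio-test}.} This is the only step needing an idea beyond direct substitution. The plan is to specialize \eqref{eq:absolute-test} to the trivial choice of $\mathcal C$ and then extract a common factor $1-|\mu|^2$.
\begin{enumerate}
\item Since $|q_j|<1$, we have $|\mu|<1$ and $v=s-|\mu|^2<1-|\mu|^2$.
\item By \eqref{eq:core} with $\Phi_{n-1}(a)\le1$ and $\prod_j|q_j|<1$, we may take $\mathcal C=|\mu|$.
\item The second term of \eqref{eq:absolute-test} then satisfies
\[
(Vv)^{n/2}\le V^{n/2}(1-|\mu|^2)^{(n-2)/2}(1-|\mu|)(1+|\mu|),
\]
and the third satisfies $Kv\le K(1-|\mu|)(1+|\mu|)$.
\end{enumerate}
Inserting these into \eqref{eq:absolute-test} and moving $\mathcal C=|\mu|$ to the left-hand side yields
\[
1-|\mu|\le(1-|\mu|)(1+|\mu|)\bigl\{V^{n/2}(1-|\mu|^2)^{(n-2)/2}+K\bigr\}.
\]
Dividing by $1-|\mu|>0$ gives \eqref{eq:ratio-test}. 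The only points requiring care are the strictness $|\mu|<1$, which justifies the division, and the choice $\mathcal C=|\mu|$, which is what allows the factor $1-|\mu|$ to cancel.
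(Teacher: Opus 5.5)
Your proposal is correct and follows the paper's proof essentially step for step. Like the paper, you take absolute values in \eqref{eq:differential} and \eqref{eq:center-identity} with the same term-by-term bounds, and for \eqref{eq:ratio-test} you use the actual bound $|a\mu J|\le|\mu|$, then $v\le 1-|\mu|^2$, then divide by $1-|\mu|>0$. Phrasing this as specializing \eqref{eq:absolute-test} to $\mathcal C=|\mu|$ is legitimate, because $|\mu|$ is a genuine upper bound for $|a\mu J|$, which is the point of Remark~\ref{rem:ratio-test}.
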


\begin{proof}
We first prove \eqref{eq:direct-test}.  Taking absolute values in
\eqref{eq:differential} gives
\[
1
\le
|F(1)|
+(n-1)a|\mu|
\left|\int_0^1F(t)\dd t\right|
+|\mathcal R|.
\]
By \eqref{eq:linear-majorant-aim} at \(t=1\) and the
arithmetic--geometric mean inequality,
\begin{equation}\label{eq:F1-kappa}
\begin{aligned}
|F(1)|
&=
\prod_{j=1}^{n-1}|1-aq_j|\\
&\le
\left(
\frac1{n-1}\sum_{j=1}^{n-1}|1-aq_j|
\right)^{n-1}\\
&\le
\kappa^{n-1}.
\end{aligned}
\end{equation}
Also, \eqref{eq:origin} yields
\begin{equation}\label{eq:origin-C-bound}
(n-1)a|\mu|
\left|\int_0^1F(t)\dd t\right|
=
\frac{n-1}{n}|a\mu J|
\le
\frac{n-1}{n}\mathcal C.
\end{equation}
Finally, by the definition of \(\mathcal R\),
\begin{equation}\label{eq:remainder-A-bound}
\begin{aligned}
|\mathcal R|
&\le
a^2\int_0^1
t\sum_{j=1}^{n-1}|q_j|^2
\prod_{k\ne j}|1-atq_k|\dd t\\
&=
a^2\int_0^1t\,\mathcal A(t)\dd t.
\end{aligned}
\end{equation}
Combining \eqref{eq:F1-kappa}, \eqref{eq:origin-C-bound}, and
\eqref{eq:remainder-A-bound} proves \eqref{eq:direct-test}.

We next derive \eqref{eq:absolute-test}.  Taking absolute values in
\eqref{eq:center-identity}, we obtain
\[
\begin{aligned}
1
&\le
|a\mu J|
+|1-a\mu|^n
+na|\mu|\int_0^1
|F(t)-(1-a\mu t)^{n-1}|\dd t.
\end{aligned}
\]
By \eqref{eq:moment-b},
$
|1-a\mu|^2\le Vv,
$
and by the defining property of \(\mathcal U\),
\[
na|\mu|\int_0^1
|F(t)-(1-a\mu t)^{n-1}|\dd t
\le
Kv.
\]
Since \(|a\mu J|\le\mathcal C\), it follows that
\[
1
\le
\mathcal C+(Vv)^{n/2}+Kv,
\]
which is \eqref{eq:absolute-test}.

It remains to prove \eqref{eq:ratio-test}.  From \eqref{eq:core},
together with \(\Phi_{n-1}(a)\le1\) and \(|q_j|<1\), we have
\[
|a\mu J|
\le
|\mu|.
\]
Using this sharper bound directly in \eqref{eq:center-identity} gives
\begin{equation}\label{eq:ratio-preliminary}
1-|\mu|
\le
(Vv)^{n/2}+Kv.
\end{equation}
Since $
v=s-|\mu|^2\le1-|\mu|^2,
$
the right-hand side of \eqref{eq:ratio-preliminary} satisfies
\[
\begin{aligned}
(Vv)^{n/2}+Kv
&=
v\left\{
V^{n/2}v^{(n-2)/2}+K
\right\}\\
&\le
(1-|\mu|^2)
\left\{
V^{n/2}(1-|\mu|^2)^{(n-2)/2}+K
\right\}.
\end{aligned}
\]
Moreover, \(|\mu|<1\), since \(|q_j|<1\) for every \(j\).  Hence
\(1-|\mu|>0\), and dividing by \(1-|\mu|\) gives
\[
1
\le
(1+|\mu|)
\left\{
V^{n/2}(1-|\mu|^2)^{(n-2)/2}+K
\right\},
\]
which proves \eqref{eq:ratio-test}.
\end{proof}

\begin{rem}\label{rem:ratio-test}
The derivation of \eqref{eq:ratio-test} uses the actual bound
$
|a\mu J|\le|\mu|
$
before any interval enlargement is made.  An independently chosen
interval upper bound \(\mathcal C\) need not satisfy
\(\mathcal C\le|\mu|\), and therefore cannot in general be substituted
for \(|a\mu J|\) in this argument.
\end{rem}
\subsection{Convex integration on a rational mesh}

Let \(f\) be a nonnegative convex function on \([l,u]\).  By convexity,
\(f\) lies below the affine chord joining its endpoint values; that is,
\[
f(t)
\le
\frac{u-t}{u-l}f(l)
+
\frac{t-l}{u-l}f(u),
\qquad l\le t\le u.
\]
Multiplying this inequality by \(t\) and \(t^2\), respectively, and
integrating over \([l,u]\), we obtain
\begin{align}
 \int_l^u tf(t)\dd t
 &\le\frac{(u-l)(2l+u)}6f(l)
      +\frac{(u-l)(l+2u)}6f(u),\label{eq:chord-one}\\
 \int_l^u t^2f(t)\dd t
 &\le\frac{(u-l)(3l^2+2lu+u^2)}{12}f(l)
      +\frac{(u-l)(l^2+2lu+3u^2)}{12}f(u).\label{eq:chord-two}
\end{align}
All four coefficients on the right-hand sides are nonnegative for
\(0\le l\le u\le1\).

We now explain how these bounds are applied to the majorants introduced
above.  Since \(b\) is nonnegative and convex, \(b^\alpha\) is convex
whenever \(\alpha\ge1\).  Likewise, \(\ell\) is nonnegative and affine,
so \(\ell^\alpha\) is convex for every \(\alpha\ge1\).  Consequently,
each of the individual majorants occurring in
\eqref{eq:direct-bounds} is convex for \(n\ge4\).  The same conclusion
holds for the retained bounds in \eqref{eq:retained}: taking a positive
part preserves convexity, and composing a nonnegative convex function
with \(x\mapsto x^\alpha\), \(\alpha\ge1\), again gives a convex
function.

For the centered interpolation estimates, the corresponding powers are
also convex when \(n\ge5\).  In particular, the exponents
\[
\frac{n-2}{2},\qquad n-2,\qquad
\frac{n-3}{2},\qquad n-3
\]
are then all at least one.  On a mesh interval \(I=[l,u]\) for which
\(|1-a\mu t|\) has a positive constant lower bound, we replace the
denominator in \eqref{eq:interpolation-first} by that constant lower
bound and apply \eqref{eq:chord-two} to each resulting convex
majorant.  If no positive lower bound for \(|1-a\mu t|\) is available
on \(I\), we use only the nonsingular estimate
\eqref{eq:interpolation-second}.

Whenever several convex majorants are available, we integrate each one
separately using \eqref{eq:chord-one} or \eqref{eq:chord-two}, and only
afterwards take the smallest of the resulting upper bounds.  This order
is important: although each individual majorant is convex, their
pointwise minimum need not be convex, so the chord inequalities cannot
in general be applied directly to that minimum.

For \(n=4\), the exponent \((n-3)/2=1/2\) occurring in
\eqref{eq:interpolation-second} is below one, and the preceding
convexity argument no longer applies to that branch.  We therefore do
not use the centered integral tests in degree \(4\).

\section{The analytic argument for large degrees}\label{sec:large}

The covariance and integral estimates developed above are already strong
enough to exclude all sufficiently large degrees, without any finite
verification.  We use the cutoff \(n=100001\); the estimates below leave
a uniform margin in each of the four ranges of \(a\).

\begin{prop}\label{prop:large}
There is no normalized counterexample with $n\ge100001$.
\end{prop}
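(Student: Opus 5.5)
We argue by contradiction: fix a normalized counterexample with $n\ge100001$ and show that the scalar tests of Lemma~\ref{lem:integral-tests} and the product bound \eqref{eq:product-basic} cannot all hold. The text before the statement mentions four ranges of $a$. We therefore split $0<a\le\sqrt{n-1}$ into four ranges: $a$ small (say $a\le a_1$ with $a_1<1$ fixed), $a$ near $1$, $a$ moderate ($1+\epsilon\le a\le A$), and $a$ large ($a\ge A$). The key inputs throughout are the moment constraints of Lemma~\ref{lem:moments} with $V=V_n<1$, and the majorants $b(t)=\beta_s(t)$ and $\ell(t)=1-(1-\kappa)t$. For large $n$ we replace all powers by exponentials, using $b(t)^{(n-2)/2}\le\exp(-\tfrac{n-2}{2}(1-b(t)))$, and similarly for $\ell$. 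Then every integral in \eqref{eq:direct-test} and \eqref{eq:absolute-test} becomes an explicit Laplace-type integral of size $O(1/(na\cdot\text{slope}))$.

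\emph{Small $a$ and large $a$.} For large $a$ we use \eqref{eq:product-basic}. It gives $(1+a^2)s\ge \tfrac{n-1}{n}n^{2/(n-1)}\approx 1+\tfrac{2\log n}{n}$, hence $s\gtrsim 1/(1+a^2)$. On the other hand, \eqref{eq:moment-a} with $V<1$ forces $\rho=2ax-a^2s\ge|\mu|^2+(1-V)(1-s)>0$, so $2x>as$. Combining this with $|\mu|<1$ and the second inequality of \eqref{eq:moment-a} gives $|\mu-a|^2\le a^2(1-s)$, which pins $\mu$ near $a$-scaled quantities. We then apply \eqref{eq:direct-test} with $\mathcal C=\psi(a)|\mu|\prod|q_j|\le\psi(a)$, which is exponentially small for large $a$. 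Here $\kappa^{n-1}$ is tiny because $\kappa^2\le\beta_s(1)<1$ uniformly, and the integral term is $\lesssim a^2\int_0^1 t\,e^{-(n-2)(ax-a^2st/2)t}\,\mathrm dt\cdot\min\{1,c_ns\}=O(1/n)$ when $\rho$ is bounded below. For small $a$, $\psi(a)\le a\sqrt e$ is small. Also $s\ge1/(a^2+V)$ from \eqref{eq:moment-b} forces $s$ close to $1$, and $|1-a\mu|$ is close to $1$. In this range \eqref{eq:absolute-test} and \eqref{eq:ratio-test} should fail. Indeed $(Vv)^{n/2}$ is negligible because $Vv\le V_n(1-|\mu|^2)<1$ strictly, and $K=O(a^3|\mu|\cdot n\cdot n^{-3}\cdot n)$ is bounded by Laplace estimates of $\int t^2 b^{(n-3)/2}$.

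\emph{Near $a=1$, the main obstacle.} Here $\Phi_{n-1}(a)\to1$, so the $J$-term gives no gain, and $\rho$ may be small, which weakens the exponential decay. I would first try to use \eqref{eq:moment-a}: it gives $\rho\ge(1-V_n)(1-s)+|\mu|^2$, together with $|1-a\mu|^2\le Vv$. The aim is to show that either $|\mu|$ is bounded below, in which case $\rho\gtrsim|\mu|^2$ gives decay $e^{-cn t}$ and the integral terms are $O(1/n)$, or $|\mu|$ is small, in which case $|1-a\mu|\approx1$ contradicts $|1-a\mu|^2\le Vv\le V(s-|\mu|^2)<1$. The delicate part is the regime where $|\mu|\to1$ and $v\to0$ simultaneously. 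In that regime \eqref{eq:ratio-test} is the right tool: it gives $1\le 2\{V^{n/2}(1-|\mu|^2)^{(n-2)/2}+K\}$, and both terms are shown to be $<1/2$ via the second-moment interpolation bound \eqref{eq:interpolation-second} integrated against $t^2$. The remaining work, which I expect to be the bulk of the effort, is bookkeeping: at each range boundary (choosing e.g. $a_1=1/2$, $\epsilon$ tied to $n^{-1/2}$, $A=4$), one must check explicitly that every error term, with constants $c_n<5/3$, $d_n<3$, etc., stays below a uniform margin for $n\ge100001$.
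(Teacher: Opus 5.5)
Your four-range outline matches the paper's, but the estimates you assign to three of the ranges do not close. \emph{Centered terms.} Your count of the centered term is off by a power of $n$. With \eqref{eq:interpolation-second}, $\mathcal U(t)$ carries the factor $\tfrac{(n-1)(n-2)}{2}d_n$. Every admissible majorant satisfies $b(t)\ge\beta_s(t)\ge(1-2axt)_+$, so $\int_0^1t^2b^{(n-3)/2}\dd t$ is of order $(nax)^{-3}$. Hence $K\gtrsim d_n|\mu|/x^3\ge d_n\approx e$ (using $x\le|\mu|<1$), not $O(a^3|\mu|/n)$. In the regime $|\mu|\to1$ that you single out, \eqref{eq:moment-a} forces $a,x\to1$. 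There the $\ell$-branch is no better: $\ell\ge1-t$ gives $K\ge\widetilde d_na^3|\mu|\to e^2$. But \eqref{eq:ratio-test} needs $(1+|\mu|)K<1$, so it cannot close near $a=1$. For fixed small $a$ one has $x\asymp|\mu|\asymp a$ and $v\approx1$, so $Kv\gtrsim1/a^2$ and the centered tests give nothing there either.

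The paper's Step~3 ($3/4\le a\le5/4$) instead uses \eqref{eq:interpolation-first} on $[0,1/2]$, where $|1-a\mu t|\ge3/8$. This saves the power of $n$ and bounds that part by $v/10$. It uses \eqref{eq:interpolation-second} only on $[1/2,1]$, where $\beta_s\le B_*=589/625$ makes the contribution exponentially small. Both the decay of $\beta_s$ on $[0,1/2]$ and the bound $B_*$ rest on the estimate $x\ge a(1-\sqrt{1-s})\ge a/(2(1+a^2))$. This comes from $|\mu-a|^2\le a^2(1-s)$ and $s(1+a^2)\ge1$, and gives $|\mu|\ge6/25$ and $\rho\ge x^2$ bounded below on compact $a$-ranges. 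Your ``small $|\mu|$'' branch does not replace it: $|1-a\mu|^2\le Vv$ permits $|1-a\mu|\approx1$ unless $|\mu|=O(1/n)$. For $a\le1/10$ the paper uses the direct test with the key input $(n-1)a^2>2\log n-1>19$. This follows from \eqref{eq:product-basic} and $s<1$, but you never extract it. Together with $\rho\ge x^2\ge\tfrac{81}{100}a^2$ it gives $|F(1)|<1/100$, $|\mathcal R|<8/((n-1)a^2)<8/19$, and $\psi(a)<1/6$.

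\emph{Large $a$.} Here $a$ ranges up to $\sqrt{n-1}$, and your claim that $\kappa^2\le\beta_s(1)<1$ holds \emph{uniformly} is unsupported, as is the proviso ``when $\rho$ is bounded below''. The lower bound $\rho\ge|\mu|^2\ge x^2\gtrsim1/(4a^2)$ is $O(1/n)$ when $a\asymp\sqrt n$. This route cannot do better, because $|\mu|\le(1+\sqrt s)/a$ by \eqref{eq:moment-b}. So neither $\beta_s(1)^{(n-1)/2}$ nor your quadratic Laplace bound for $a^2\int_0^1t\,\mathcal A\dd t$ is small. The lower bound $s\gtrsim1/(1+a^2)$ points the wrong way; what is needed is an \emph{upper} bound on $s$. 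By \eqref{eq:moment-a}, $a^2s\le2a|\mu|-|\mu|^2\le2a-1$, so $s\le3/4$ for $a\ge2$. Taking $\kappa^2=Vs\le3/4$ in \eqref{eq:kappa} gives $\kappa<7/8$ and $\ell(t)=1-t/8$, and Minkowski gives $a\sqrt s\le1+\sqrt{\beta_s(1)}\le2$. The linear branch of \eqref{eq:direct-bounds} then yields $|F(1)|\le(7/8)^{n-1}$ and $|\mathcal R|\le768/n$. With $\psi(a)\le\psi(2)<9/20$, the range $a\ge2$ closes without any lower bound on $\rho$.
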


\begin{proof}
Throughout the proof we use \(V=1\).  From \eqref{eq:moment-a},
$
|\mu-a|^2\le a^2(1-s),
$
and hence
\[
x\ge a-|a-\mu|
\ge a(1-\sqrt{1-s}).
\]
On the other hand, \eqref{eq:moment-b} gives
$
s(1+a^2)\ge1,
$
so that
\[
1-s\le\frac{a^2}{1+a^2}.
\]
Consequently,
\begin{equation}\label{eq:xlower}
 x\ge a(1-\sqrt{1-s})
 \ge a\left(1-\frac a{\sqrt{1+a^2}}\right)
 \ge\frac{a}{2(1+a^2)}.
\end{equation}
For the last inequality, rationalization gives
\[
1-\frac a{\sqrt{1+a^2}}
=
\frac{1}
{\sqrt{1+a^2}\bigl(\sqrt{1+a^2}+a\bigr)}
\ge
\frac1{2(1+a^2)}.
\]

Set
$
g(a)=a^2/[4(1+a^2)^2].
$
Since \(\rho\ge|\mu|^2\ge x^2\), \eqref{eq:xlower} and the definition
of \(\beta_s\) imply
\[
\beta_s(t)
=
1-\rho t-a^2s\,t(1-t)
\le
1-g(a)t.
\]

We first obtain a consequence that will be used repeatedly.  Suppose
more generally that
\[
\beta_s(t)\le1-gt,
\qquad 0\le t\le1,
\]
for some \(g>0\).  By the arithmetic--geometric mean inequality,
\[
\begin{aligned}
|F(1)|
&=
\prod_{j=1}^{n-1}|1-aq_j|\\
&\le
\left(
\frac1{n-1}\sum_{j=1}^{n-1}|1-aq_j|^2
\right)^{(n-1)/2}\\
&=
\beta_s(1)^{(n-1)/2}\\
&\le
(1-g)^{(n-1)/2}\\
&\le
e^{-(n-1)g/2}.
\end{aligned}
\]
Likewise, Lemma~\ref{lem:direct}, with \(b=\beta_s\), gives
\[
\mathcal A(t)
\le
(n-1)\beta_s(t)^{(n-2)/2}
\le
(n-1)e^{-(n-2)gt/2}.
\]
It follows from the definition of \(\mathcal R\) that
\[
\begin{aligned}
|\mathcal R|
&\le
a^2(n-1)
\int_0^1t\,e^{-(n-2)gt/2}\dd t\\
&\le
a^2(n-1)
\int_0^\infty t\,e^{-(n-2)gt/2}\dd t\\
&=
\frac{4(n-1)a^2}{(n-2)^2g^2}.
\end{aligned}
\]
Thus
\begin{equation}\label{eq:exp-direct}
 |F(1)|\le e^{-(n-1)g/2},\qquad
 |\mathcal R|\le\frac{4(n-1)a^2}{(n-2)^2g^2}.
\end{equation}

Taking absolute values in \eqref{eq:differential}, and then using
\eqref{eq:origin} and \eqref{eq:core}, gives
\[
\begin{aligned}
1
&\le
|F(1)|
+\frac{n-1}{n}|a\mu J|
+|\mathcal R|\\
&\le
|F(1)|+\psi(a)+|\mathcal R|.
\end{aligned}
\]
Hence every normalized counterexample must satisfy
\begin{equation}\label{eq:large-direct}
 1\le |F(1)|+\psi(a)+|\mathcal R|.
\end{equation}
We now derive a contradiction in four ranges of \(a\).

\step{Step 1. \(0<a\le1/10\).}
Since \(|a-\zeta_j|>1\) for every \(j\),
\[
|P'(a)|
=
n\prod_{j=1}^{n-1}|a-\zeta_j|
>n.
\]
On the other hand,
$
P'(a)=\prod_{j=1}^{n-1}(a-z_j),
$
and, as in the proof of Lemma~\ref{lem:basic-product},
\[
\frac1{n-1}\sum_{j=1}^{n-1}|a-z_j|^2
=
\frac{n(1+a^2)}{n-1}.
\]
The arithmetic--geometric mean inequality therefore yields
\[
n
<
\left(
\frac{n(1+a^2)}{n-1}
\right)^{(n-1)/2}.
\]
Taking logarithms gives
\[
(n-1)\log(1+a^2)
>
2\log n
-(n-1)\log\left(1+\frac1{n-1}\right).
\]
Since
$
(n-1)\log\left(1+\frac1{n-1}\right)<1
$
and \(\log(1+a^2)<a^2\), we obtain
\begin{equation}\label{eq:small-a-n}
 (n-1)a^2>2\log n-1>19.
\end{equation}

For \(a\le1/10\), the middle expression in \eqref{eq:xlower} satisfies
\[
a\left(1-\frac a{\sqrt{1+a^2}}\right)
\ge
\frac{9a}{10}.
\]
Hence
\[
\rho\ge x^2\ge\frac{81}{100}a^2>\frac45a^2,
\]
and therefore
\[
\beta_s(t)\le1-\frac45a^2t.
\]
Applying \eqref{eq:exp-direct} with \(g=\frac45a^2\), and using
\eqref{eq:small-a-n}, gives
\[
|F(1)|
\le
e^{-\frac25(n-1)a^2}
<
\frac1{100}.
\]
Moreover,
\[
|\mathcal R|
\le
\frac{25(n-1)}
{4(n-2)^2a^2}
<
\frac8{(n-1)a^2}
<
\frac8{19}.
\]
Finally, \(\psi\) is increasing on \((0,1)\), and hence
\[
\psi(a)\le\psi(1/10)<\frac16.
\]
Substitution into \eqref{eq:large-direct} gives
\[
1
<
\frac1{100}+\frac16+\frac8{19}
<1,
\]
a contradiction.

\step{Step 2. \(a\in[1/10,3/4]\cup[5/4,2]\).}
The function
$
g(a)=\frac{a^2}{4(1+a^2)^2}
$
is increasing on \((0,1]\) and decreasing on \([1,\infty)\).  Hence,
on the present range,
$
g(a)\ge\frac1{500}.
$
Equation~\eqref{eq:exp-direct} therefore gives
\[
|F(1)|
\le
e^{-(n-1)/1000}
<
\frac1{1000}.
\]

Set
$
H_0(a)=\frac{16(1+a^2)^4}{a^2}.
$
Since \(g=g(a)\), the second estimate in \eqref{eq:exp-direct} becomes
\[
|\mathcal R|
\le
\frac{4H_0(a)(n-1)}{(n-2)^2}
<
\frac{5H_0(a)}{n-1}.
\]
The required bounds for \(\psi\) and \(H_0\) are summarized in
Table~\ref{tab:large}.

\begin{table}[ht]
\centering
\caption{Bounds away from \(a=1\) in the large-degree argument.}
\label{tab:large}
\begin{tabular}{@{}ccc@{}}
\toprule
Range of \(a\) & Upper bound for \(\psi(a)\) & Upper bound for \(H_0(a)\)\\
\midrule
$[1/10,1/2]$ & $3/4$ & $1665$\\
$[1/2,3/4]$ & $19/20$ & $170$\\
$[5/4,3/2]$ & $19/20$ & $794$\\
$[3/2,2]$ & $17/20$ & $2500$\\
\bottomrule
\end{tabular}
\end{table}

To verify the table, recall that \(\psi\) increases on \((0,1]\) and
decreases on \([1,\infty)\).  If \(u=a^2\), then
\[
H_0(a)=16\frac{(1+u)^4}{u},
\]
whose logarithmic derivative has the sign of \(3u-1\).  Thus only the
appropriate endpoints of the four intervals need to be checked.  The
bounds for \(\psi\) follow, for example, from
\[
e^{3/8}<\frac32,\qquad
e^{7/32}<\frac{19}{15},\qquad
e^{9/32}>\frac{25}{19},\qquad
e^{5/8}>\frac{30}{17}.
\]
These elementary exponential comparisons may be verified directly from
the exponential series; an exact rational check is included in the
supplement.

Since \(n-1\ge100000\), the right-hand side of
\eqref{eq:large-direct} is, in each row of the table, strictly less
than one.  The largest of the four resulting upper bounds is
\[
\frac1{1000}
+\frac{19}{20}
+\frac{5\cdot794}{100000}
=
\frac{9907}{10000}
<1.
\]
This again contradicts \eqref{eq:large-direct}.

\step{Step 3. \(3/4\le a\le5/4\).}
From \eqref{eq:xlower},
\[
|\mu|\ge x\ge\frac6{25}.
\]
Set
$
B_*=\frac{589}{625}.
$
Since \(\rho\ge|\mu|^2\), we have
\[
\rho\ge\frac{36}{625},
\qquad
\beta_s(1)=1-\rho\le B_*.
\]
Also, \eqref{eq:moment-b} gives
\[
s(1+a^2)\ge1,
\]
and hence, since \(a\ge3/4\),
\[
a^2s
\ge
\frac{a^2}{1+a^2}
\ge
\frac9{25}.
\]

For \(0\le t\le1/2\),
\[
\begin{aligned}
\beta_s(t)
&=
1-\rho t-a^2s\,t(1-t)\\
&\le
1-t\left(
\frac{36}{625}+\frac9{25}(1-t)
\right)\\
&\le
1-\frac t5.
\end{aligned}
\]
Moreover,
\[
\beta_s(1/2)
\le
1-\frac{18}{625}-\frac9{100}
=
\frac{2203}{2500}
<
\frac9{10}.
\]
Since \(\beta_s\) is convex, its maximum on \([1/2,1]\) is attained at
an endpoint.  Together with \(\beta_s(1)\le B_*\), this gives
\begin{equation}\label{eq:central-beta}
 \beta_s(t)\le1-t/5\quad(0\le t\le1/2),\qquad
 \beta_s(t)\le B_*\quad(1/2\le t\le1).
\end{equation}
Finally, for \(0\le t\le1/2\),
\[
|1-a\mu t|
\ge
1-a|\mu|t
\ge
1-\frac58
=
\frac38.
\]

Let
\[
E
=
n|a\mu|
\int_0^1
|F(t)-(1-a\mu t)^{n-1}|\dd t,
\]
and write \(E=E_1+E_2\), where the two terms correspond to
\([0,1/2]\) and \([1/2,1]\), respectively.

For \(E_1\), use \eqref{eq:interpolation-first},
\eqref{eq:central-beta}, \(c_n<5/3\),
\(a\le5/4\), \(|a\mu|\le5/4\), and
\(|1-a\mu t|\ge3/8\).  Since
\[
\left(1-\frac t5\right)^{(n-2)/2}
\le
e^{-(n-2)t/10},
\]
we obtain
\begin{align}
E_1
&\le
\frac{625}{144}n(n-1)v
\int_0^\infty
t^2e^{-(n-2)t/10}\dd t \notag\\
&=
\frac{78125}{9}
\frac{n(n-1)}{(n-2)^3}v
\le
\frac{10000}{n-1}v
\le
\frac v{10}.
\label{eq:Efirst}
\end{align}

For \(E_2\), we use the nonsingular estimate
\eqref{eq:interpolation-second}, together with \(d_n<3\) and the
second estimate in \eqref{eq:central-beta}.  Since
$
\int_{1/2}^1t^2\dd t=\frac7{24},
$
we obtain
\begin{align}
E_2
&\le
2n(n-1)^2vB_*^{(n-3)/2}\notag\\
&\le
2n(n-1)^2v\,e^{-(n-1)/40}\notag\\
&\le
\frac{6\cdot10^{12}}{(n-1)^3}v
<
\frac v{10},
\label{eq:Esecond}
\end{align}
where for the second inequality, note that
\[
-\log B_*
=
-\log\left(1-\frac{36}{625}\right)
>
\frac{36}{625}
>
\frac1{18},
\]
and hence, for \(n\ge100001\),
\[
B_*^{(n-3)/2}
\le
e^{-(n-1)/40}.
\]
For the third inequality, we use
\[
e^{(n-1)/40}
\ge
\frac{((n-1)/40)^6}{720}.
\]

It remains to bound the endpoint term in
\eqref{eq:center-identity}.  Since
$
v\le1-|\mu|^2\le B_*,
$ 
we have
\[
\begin{aligned}
v^{n/2}
&=
v\,v^{(n-2)/2}\\
&\le
(1-|\mu|^2)B_*^{(n-2)/2}\\
&\le
2(1-|\mu|)B_*^{(n-2)/2}\\
&\le
\frac{1-|\mu|}{5}.
\end{aligned}
\]
Now \eqref{eq:center-identity}, \eqref{eq:moment-b},
\eqref{eq:core}, \eqref{eq:Efirst}, and \eqref{eq:Esecond} give
\[
\begin{aligned}
1
&\le
|\mu|
+v^{n/2}
+E_1+E_2\\
&\le
|\mu|
+\frac{1-|\mu|}{5}
+\frac v5\\
&\le
|\mu|
+\frac35(1-|\mu|)\\
&<1,
\end{aligned}
\]
where in the last two lines we used
\(v\le1-|\mu|^2\le2(1-|\mu|)\) and \(|\mu|<1\).
This is the desired contradiction.

\step{Step 4. \(a\ge2\).}
From \eqref{eq:moment-a}, with \(V=1\),
\[
|\mu-a|^2\le a^2(1-s).
\]
Expanding the left-hand side gives
\[
a^2s
\le
2ax-|\mu|^2
\le
2a|\mu|-|\mu|^2
\le
2a-1.
\]
Since the function \((2a-1)/a^2\) is decreasing for \(a\ge2\), it follows
that
\[
s\le\frac{2a-1}{a^2}\le\frac34.
\]

We also have
\[
\beta_s(1)
=
\frac1{n-1}\sum_{j=1}^{n-1}|1-aq_j|^2
\le1,
\]
because \(\beta_s(1)=1-\rho\) and \(\rho\ge0\).  Minkowski's inequality
therefore gives
\[
\begin{aligned}
a\sqrt{s}
&=
\left(
\frac1{n-1}\sum_{j=1}^{n-1}|aq_j|^2
\right)^{1/2}\\
&\le
1+
\left(
\frac1{n-1}\sum_{j=1}^{n-1}|1-aq_j|^2
\right)^{1/2}\\
&=
1+\sqrt{\beta_s(1)}
\le2.
\end{aligned}
\]
Thus
$
a^2s\le4.
$

Moreover, since \(1-aq_j=-\zeta_jq_j\), the Cauchy--Schwarz inequality gives
\[
\begin{aligned}
\frac1{n-1}\sum_{j=1}^{n-1}|1-aq_j|
&=
\frac1{n-1}\sum_{j=1}^{n-1}|\zeta_jq_j|\\
&\le
\sqrt{V_*s}\\
&\le
\sqrt{s}
\le
\frac{\sqrt3}{2}
<
\frac78.
\end{aligned}
\]
We may therefore take
\[
\ell(t)=1-\frac t8.
\]

Using the linear branch of Lemma~\ref{lem:direct},
\(\widetilde c_n<3\), and \(a^2s\le4\), we obtain
\[
\begin{aligned}
|\mathcal R|
&\le
a^2(n-1)\widetilde c_ns
\int_0^1t\left(1-\frac t8\right)^{n-2}\dd t\\
&\le
12(n-1)
\int_0^8t\left(1-\frac t8\right)^{n-2}\dd t\\
&=
\frac{768}{n}.
\end{aligned}
\]
Similarly,
\[
|F(1)|
\le
\left(\frac78\right)^{n-1}.
\]
Finally, \(\psi\) decreases on \([1,\infty)\), so
\[
\psi(a)
\le
\psi(2)
=
2e^{-3/2}
<
\frac9{20}.
\]
Substituting these bounds into \eqref{eq:large-direct} gives
\[
1
\le
\left(\frac78\right)^{n-1}
+\frac9{20}
+\frac{768}{n}
<1,
\]
since \(n\ge100001\).  This final contradiction completes the proof.
\end{proof}

\section{Product bounds for the small degrees}\label{sec:products}

The direct estimates become less effective when both the reciprocal mean
and the relevant products are close to their extremal values.  To obtain
the additional information needed in the small-degree range, we shall
retain more of the covariance structure of the original root distances
and of the variation in the reciprocal moduli.  The notation introduced
in Section~\ref{sec:notation} remains in force.

\subsection{An elementary symmetric inequality}

Recall the elementary symmetric polynomials \(e_k\) introduced in
Section~\ref{sec:notation}.  The following inequality provides the
refinement of the arithmetic--geometric mean estimate that will be used
in Lemma~\ref{lem:root-product}.

\begin{lem}\label{lem:symmetric}
Let \(n\ge4\), and let
\(\boldsymbol{\xi}=(\xi_1,\ldots,\xi_{n-1})\) satisfy $\xi_j\ge0$ for all $ 1\le j\le n-1$ and $
\sum_{j=1}^{n-1}\xi_j=n-1.$
Then
\begin{equation}\label{eq:symmetric}
e_{n-2}(\boldsymbol{\xi})
\le
3+(n-4)e_{n-1}(\boldsymbol{\xi}).
\end{equation}
\end{lem}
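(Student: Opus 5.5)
The plan is a smoothing argument on the compact simplex. Put $m=n-1\ge3$ and
\[
f(\boldsymbol\xi)=e_{m-1}(\boldsymbol\xi)-(m-3)\,e_m(\boldsymbol\xi),
\]
so that \eqref{eq:symmetric} says $f\le3$ on
$\Delta=\{\boldsymbol\xi\in[0,\infty)^m:\sum_j\xi_j=m\}$. The function $f$ is continuous and $\Delta$ is compact, so maximizers exist. Among all maximizers, choose one, $\boldsymbol\xi^*$, that minimizes $\sum_j\xi_j^2$; this set of maximizers is closed, so such a choice exists. The goal is to show that the positive coordinates of $\boldsymbol\xi^*$ are all equal, and then to evaluate $f$ on such configurations.

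\textbf{Two-variable reduction.} Fix all coordinates except $\xi_1,\xi_2$, and let $E_k'$ denote the elementary symmetric polynomials of the remaining $m-2$ coordinates. Expanding gives
\[
e_{m-1}=\xi_1\xi_2E'_{m-3}+(\xi_1+\xi_2)E'_{m-2},\qquad e_m=\xi_1\xi_2E'_{m-2}.
\]
Hence, with $\sigma=\xi_1+\xi_2$ held fixed, $f$ is an affine function of $p=\xi_1\xi_2\in[0,\sigma^2/4]$, with slope $E'_{m-3}-(m-3)E'_{m-2}$.

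Suppose two coordinates of $\boldsymbol\xi^*$ are positive and unequal, so that $0<p<\sigma^2/4$. There are three cases.
\begin{enumerate}
\item If the slope is positive, replacing both coordinates by $\sigma/2$ strictly increases $f$, contradicting maximality.
\item If the slope is negative, moving to $p=0$, that is to $(\sigma,0)$, strictly increases $f$, again a contradiction.
\item If the slope is zero, replacing both coordinates by $\sigma/2$ keeps $f$ unchanged but strictly lowers $\sum_j\xi_j^2$, contradicting the choice of $\boldsymbol\xi^*$.
\end{enumerate}
So all positive coordinates of $\boldsymbol\xi^*$ are equal. Consequently $\boldsymbol\xi^*$ has exactly $k$ coordinates equal to $m/k$ and the remaining $m-k$ coordinates equal to $0$, for some $1\le k\le m$.

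\textbf{Evaluation.} There are three ranges of $k$.
\begin{enumerate}
\item If $k\le m-2$, every term of $e_{m-1}$ and of $e_m$ contains a zero coordinate, so $f=0$.
\item If $k=m-1$, then $e_m=0$ and $e_{m-1}=(m/(m-1))^{m-1}<e<3$.
\item If $k=m$, then all $\xi_j=1$, so $e_{m-1}=m$, $e_m=1$, and $f=m-(m-3)=3$.
\end{enumerate}
Thus $\max_\Delta f=3$, which is \eqref{eq:symmetric}, with equality at $\xi_j\equiv1$.

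The main point needing care is the degenerate slope-zero case in the smoothing step. The tie-breaking choice of a maximizer with minimal $\sum_j\xi_j^2$ is what resolves it. Otherwise the argument is routine, and it does not matter whether the slope $E'_{m-3}-(m-3)E'_{m-2}$ is positive or negative, since both signs lead to an extreme value of $p$.
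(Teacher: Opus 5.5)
Your argument is correct and complete, but it takes a different route from the paper. The paper argues by calculus, in three stages:
\begin{enumerate}
\item It disposes of the boundary of the simplex directly: two vanishing coordinates give $f=0$, and one vanishing coordinate is handled by AM--GM.
\item At an interior maximizer it uses the Lagrange condition and the identity $\partial_{\xi_i}f-\partial_{\xi_j}f=(\xi_j-\xi_i)\prod_{\ell\ne i,j}\xi_\ell\bigl(\sum_{\ell\ne i,j}\xi_\ell^{-1}-(n-4)\bigr)$ to show that at most two distinct values occur.
\item A second-variation test along $(u+t,u-t)$ forces one value to occur only once, and solving the stationarity equation then gives the impossible coordinate $w=-2/(n-4)$.
\end{enumerate}
For positive coordinates, $\prod_{\ell\ne i,j}\xi_\ell\bigl(\sum_{\ell\ne i,j}\xi_\ell^{-1}-(n-4)\bigr)$ is exactly your slope $E'_{m-3}-(m-3)E'_{m-2}$. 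So the paper's first- and second-order conditions are the infinitesimal versions of your observation that $f$ is affine in $\xi_1\xi_2$ at fixed $\xi_1+\xi_2$.

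You use that affine structure globally, with the $\sum_j\xi_j^2$ tie-break for zero slope. This avoids derivatives, the second-order analysis, and a separate boundary case, since the move to $p=0$ simply lands on the boundary. It also proves more: any real linear combination of elementary symmetric polynomials attains its maximum on the simplex at a point with $k$ equal positive coordinates and the rest zero. The paper's approach, in return, gives structural information about the interior critical points (at most two distinct values), which your extremal argument never needs.
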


\begin{proof}
Consider
$
f(\boldsymbol{\xi})
=
e_{n-2}(\boldsymbol{\xi})
-(n-4)e_{n-1}(\boldsymbol{\xi})
$
on the compact simplex
\[
\left\{
\boldsymbol{\xi}\in[0,\infty)^{n-1}:
\sum_{j=1}^{n-1}\xi_j=n-1
\right\}.
\]
It is enough to show that the maximum of \(f\) is \(3\).

\step{Step 1. Boundary points.}
Suppose first that \(\boldsymbol{\xi}\) lies on the boundary of the
simplex.  If at least two coordinates vanish, then every product
appearing in \(e_{n-2}(\boldsymbol{\xi})\) contains a zero, and hence
\[
e_{n-2}(\boldsymbol{\xi})
=
e_{n-1}(\boldsymbol{\xi})
=
0.
\]
Thus \(f(\boldsymbol{\xi})=0\).

If exactly one coordinate vanishes, say \(\xi_{n-1}=0\), then
\[
e_{n-1}(\boldsymbol{\xi})=0,
\qquad
e_{n-2}(\boldsymbol{\xi})
=
\prod_{j=1}^{n-2}\xi_j.
\]
Since
$
\sum_{j=1}^{n-2}\xi_j=n-1,
$
the arithmetic--geometric mean inequality gives
\[
f(\boldsymbol{\xi})
=
\prod_{j=1}^{n-2}\xi_j
\le
\left(\frac{n-1}{n-2}\right)^{n-2}
<3.
\]
On the other hand, at
$
\boldsymbol{\xi}=(1,\ldots,1)
$
we have
\[
e_{n-2}(\boldsymbol{\xi})=n-1,
\qquad
e_{n-1}(\boldsymbol{\xi})=1,
\]
and therefore
\[
f(1,\ldots,1)=3.
\]
It remains only to exclude a nonconstant interior maximizer with value
larger than \(3\).

\step{Step 2. Interior stationary points.}
Let \(\boldsymbol{\xi}\) be an interior maximizer.  Since the constraint
is
$
\sum_{j=1}^{n-1}\xi_j=n-1,
$
the Lagrange multiplier equations imply
\[
\frac{\partial f}{\partial \xi_i}
=
\frac{\partial f}{\partial \xi_j}
\qquad
(1\le i,j\le n-1).
\]
For \(i\ne j\), a direct calculation gives
\begin{equation}\label{eq:symmetric-gradient}
\frac{\partial f}{\partial \xi_i}
-
\frac{\partial f}{\partial \xi_j}
=
(\xi_j-\xi_i)
\prod_{\substack{1\le \ell\le n-1\\ \ell\ne i,j}}
\xi_\ell
\left(
\sum_{\substack{1\le \ell\le n-1\\ \ell\ne i,j}}
\frac1{\xi_\ell}
-(n-4)
\right).
\end{equation}
Since all coordinates are positive, whenever \(\xi_i\ne\xi_j\),
\eqref{eq:symmetric-gradient} yields
\begin{equation}\label{eq:stationary}
\sum_{\substack{1\le \ell\le n-1\\ \ell\ne i,j}}
\frac1{\xi_\ell}
=
n-4.
\end{equation}

We claim that at most two distinct coordinate values can occur.
Indeed, suppose that \(\xi_i,\xi_j,\xi_k\) are three distinct values.
Applying \eqref{eq:stationary} to the pairs \((i,j)\) and \((i,k)\)
and subtracting the resulting identities gives
\[
\frac1{\xi_k}
=
\frac1{\xi_j},
\]
and hence \(\xi_j=\xi_k\), a contradiction.  Thus every nonconstant
interior stationary point has exactly two distinct coordinate values,
say \(u\ne w\).

\step{Step 3. Exclusion of nonconstant interior maxima.}
We first show that \(u\) and \(w\) cannot both occur at least twice.
Suppose that
$
\xi_i=\xi_j=u.
$
Along the tangent direction in which \(\xi_i\) is replaced by
\(u+t\) and \(\xi_j\) by \(u-t\), while all other coordinates are held
fixed, the second derivative at \(t=0\) is
\begin{equation}\label{eq:symmetric-second-variation}
-2
\prod_{\substack{1\le \ell\le n-1\\ \ell\ne i,j}}
\xi_\ell
\left(
\sum_{\substack{1\le \ell\le n-1\\ \ell\ne i,j}}
\frac1{\xi_\ell}
-(n-4)
\right).
\end{equation}
At a local maximum this quantity must be nonpositive.  Since all
coordinates are positive, \eqref{eq:symmetric-second-variation}
therefore implies
\[
\sum_{\substack{1\le \ell\le n-1\\ \ell\ne i,j}}
\frac1{\xi_\ell}
\ge n-4.
\]

Now choose a mixed pair consisting of one coordinate equal to \(u\)
and one coordinate equal to \(w\).  By \eqref{eq:stationary},
\[
\sum_{\substack{1\le \ell\le n-1\\ \ell\ne i,j}}
\frac1{\xi_\ell}
=
n-4
\]
for such a pair.  Comparing this identity with the preceding inequality
for a pair \(u,u\) gives
\[
\frac1w-\frac1u\ge0,
\]
and hence \(u\ge w\).  If \(w\) also occurs at least twice, the same
argument applied to a pair \(w,w\) gives
\[
\frac1u-\frac1w\ge0,
\]
and hence \(w\ge u\).  This contradicts \(u\ne w\).

Thus one of the two values occurs exactly once.  Relabeling if
necessary, suppose that \(u\) occurs \(n-2\) times and \(w\) occurs
once.  Applying \eqref{eq:stationary} to a mixed pair \(u,w\) leaves
\(n-3\) coordinates, all equal to \(u\), and therefore
\begin{equation}\label{eq:symmetric-two-value}
\frac{n-3}{u}=n-4.
\end{equation}
For \(n=4\), \eqref{eq:symmetric-two-value} is impossible.  For
\(n>4\), it gives
\[
u=\frac{n-3}{n-4}.
\]
Using
$
(n-2)u+w=n-1,
$
we then obtain
\[
w
=
n-1-\frac{(n-2)(n-3)}{n-4}
=
-\frac2{n-4}<0,
\]
contrary to the positivity of the coordinates.

Hence there is no nonconstant interior maximum.  The only remaining
interior maximizer is
\(\boldsymbol{\xi}=(1,\ldots,1)\), where \(f=3\).  Together with
Step~1, this proves \eqref{eq:symmetric}.
\end{proof}

\subsection{The original root distances}

Define
\begin{equation}\label{eq:ABC}
 \begin{gathered}
 A_0=\frac1{n-1}\sum_{j=1}^{n-1}(a-z_j)=\frac{na}{n-1},\qquad
 B_0=\frac1{n-1}\sum_{j=1}^{n-1}|a-z_j|^2=\frac{n(1+a^2)}{n-1},\\
 C_0=B_0-A_0^2=\frac{n(n-1-a^2)}{(n-1)^2}.
 \end{gathered}
\end{equation}
The logarithmic derivative identity of Meir and Sharma \cite[equation~(3.2)]{MS69} relates these root distances to the reciprocal critical-point mean. Tao \cite[Lemma~15]{Tao26} states its simple-zero case explicitly.

\begin{lem}[{\cite[equation~(3.2)]{MS69}}]\label{lem:inverse-root}
For the original root distances $a-z_j$ in \eqref{eq:ABC}, one has
\begin{equation*}
 \frac1{n-1}\sum_{j=1}^{n-1}\frac1{a-z_j}=\frac\mu2.
\end{equation*}
\end{lem}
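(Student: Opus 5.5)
The plan is to compute the logarithmic derivatives of $P'$ and $P$ at the simple zero $a$, and then compare the two expansions. Write $P(z)=(z-a)Q(z)$ with $Q(z)=\prod_{j=1}^{n-1}(z-z_j)$; by the normalization, $Q(a)\ne0$, since the prescribed zero is simple. Differentiating twice gives
\[
P'(z)=Q(z)+(z-a)Q'(z),\qquad P''(z)=2Q'(z)+(z-a)Q''(z).
\]
At $z=a$ this yields $P'(a)=Q(a)$ and $P''(a)=2Q'(a)$. Hence
\[
\frac{P''(a)}{P'(a)}=2\,\frac{Q'(a)}{Q(a)}=2\sum_{j=1}^{n-1}\frac1{a-z_j}.
\]

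Next I use the factorization $P'(z)=n\prod_{j=1}^{n-1}(z-\zeta_j)$. None of the $\zeta_j$ equals $a$, because $|a-\zeta_j|>1$ in a normalized counterexample; equivalently, $P'(a)\ne0$ since $a$ is simple. The logarithmic derivative of $P'$ at $a$ is therefore
\[
\frac{P''(a)}{P'(a)}=\sum_{j=1}^{n-1}\frac1{a-\zeta_j}=\sum_{j=1}^{n-1}q_j=(n-1)\mu .
\]

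Equating the two expressions gives $2\sum_{j}(a-z_j)^{-1}=(n-1)\mu$. Dividing by $2(n-1)$ yields the claimed identity.

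There is no real obstacle here. The only point requiring care is that both logarithmic derivatives are defined at $a$: $Q(a)\ne0$ holds by simplicity of the zero $a$, and $a\ne\zeta_j$ holds by the counterexample hypothesis, which is already encoded in $0<|q_j|<1$.
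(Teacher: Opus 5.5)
Your proof is correct: computing $P''(a)/P'(a)$ once from $P=(z-a)Q$, which gives $2Q'(a)/Q(a)$, and once from $P'=n\prod_j(z-\zeta_j)$, which gives $\sum_j q_j$, is exactly the Meir--Sharma logarithmic-derivative identity that the paper cites without reproducing a proof. Your check that both logarithmic derivatives are defined at $a$ (because $a$ is simple, i.e.\ $P'(a)=Q(a)\neq0$) handles the only point that needs care.
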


\begin{lem}\label{lem:root-product}
If \(C_0>0\), then
\begin{equation}\label{eq:root-product}
 \prod_{j=1}^{n-1}|q_j|^2
 \ge
 \frac{n^2}{B_0^{n-1}}(1+\gamma),
 \qquad \text{ where }
 \gamma
:=
 \frac{n-1}{3}
 \frac{|A_0-B_0\mu/2|^2}{C_0}.
\end{equation}
If \(C_0=0\), then \(A_0-B_0\mu/2=0\), and the same bound holds with
\(\gamma=0\).
\end{lem}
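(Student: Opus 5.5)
The plan is to reduce \eqref{eq:root-product} to an upper bound for the product of the original root distances, and to get that bound from Lemma~\ref{lem:symmetric}. Write $u_j=a-z_j$. Since the prescribed zero is simple, $P'(a)\neq0$ and every $u_j\ne0$. By \eqref{eq:u-q-product},
\[
\prod_{j=1}^{n-1}|q_j|^2=\frac{n^2}{\prod_{j}|u_j|^2}.
\]
Set $\xi_j=|u_j|^2/B_0$. Then $\xi_j>0$, $\sum_j\xi_j=n-1$ by \eqref{eq:ABC}, and $\prod_j|u_j|^2=B_0^{n-1}e_{n-1}(\boldsymbol\xi)$. So the claim is equivalent to
\[
e_{n-1}(\boldsymbol\xi)\,(1+\gamma)\le1 .
\]

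\emph{Step 1: an exact formula for $A_0-B_0\mu/2$.} Lemma~\ref{lem:inverse-root} and $1/u_j=\overline{u_j}/|u_j|^2=\overline{u_j}/(B_0\xi_j)$ give
\[
\frac{B_0\mu}2=\frac1{n-1}\sum_j\frac{\overline{u_j}}{\xi_j}.
\]
Since $A_0$ is real, $A_0=\frac1{n-1}\sum_j\overline{u_j}$. Hence
\[
A_0-\frac{B_0\mu}{2}=\frac1{n-1}\sum_j\Bigl(1-\frac1{\xi_j}\Bigr)\overline{u_j}.
\]
We also have $\sum_j(1-1/\xi_j)\,\xi_j=\sum_j(\xi_j-1)=0$. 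Therefore, for any real $\lambda$, $\overline{u_j}$ may be replaced by $\overline{u_j}-\lambda\xi_j$ without changing the sum. I will take $\lambda=A_0$.

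\emph{Step 2: a weighted Cauchy--Schwarz inequality.} Split each summand as $\bigl(\sqrt{\xi_j}(1-1/\xi_j)\bigr)\cdot\bigl((\overline{u_j}-A_0\xi_j)/\sqrt{\xi_j}\bigr)$ and apply Cauchy--Schwarz. The first factor gives
\[
\frac1{n-1}\sum_j\Bigl(\xi_j-2+\frac1{\xi_j}\Bigr)=\frac{e_{n-2}(\boldsymbol\xi)}{(n-1)e_{n-1}(\boldsymbol\xi)}-1 .
\]
For the second factor, use $|u_j|^2=B_0\xi_j$:
\[
\frac1{n-1}\sum_j\Bigl(B_0-2A_0\Rea u_j+A_0^2\xi_j\Bigr)=B_0-2A_0^2+A_0^2=C_0 .
\]
Thus
\[
|A_0-B_0\mu/2|^2\le C_0\left(\frac{e_{n-2}}{(n-1)e_{n-1}}-1\right).
\]
If $C_0>0$, this becomes
\[
\gamma\le\frac{e_{n-2}-(n-1)e_{n-1}}{3e_{n-1}}.
\]

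\emph{Step 3: apply Lemma~\ref{lem:symmetric}.} Inequality \eqref{eq:symmetric} gives $e_{n-2}-(n-1)e_{n-1}\le 3-3e_{n-1}$. Hence $\gamma\le 1/e_{n-1}-1$, that is, $e_{n-1}(1+\gamma)\le1$, which is the claim.

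\emph{The case $C_0=0$.} If $C_0=0$, then $\frac1{n-1}\sum_j|u_j-A_0|^2=B_0-A_0^2=0$, so every $u_j=A_0$. Then $A_0-B_0\mu/2=A_0-A_0^2\cdot\frac{1}{A_0}=0$. The bound with $\gamma=0$ is just the AM--GM bound $e_{n-1}\le1$.

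The main obstacle is Step 2: one has to find the right centering $\overline{u_j}-A_0\xi_j$ and the matching weights, so that one Cauchy--Schwarz factor is exactly $C_0$ and the other is exactly the ratio $e_{n-2}/e_{n-1}$ that Lemma~\ref{lem:symmetric} controls. The other steps are bookkeeping.
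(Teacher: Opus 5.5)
Your proof is correct and follows the paper's route: reduce to $e_{n-1}(\boldsymbol\xi)(1+\gamma)\le1$ with $\xi_j=|a-z_j|^2/B_0$, prove $B_0T\ge1+|A_0-B_0\mu/2|^2/C_0$ with $T=\frac1{n-1}\sum_j|a-z_j|^{-2}$ by Cauchy--Schwarz (your bracket $\frac{e_{n-2}}{(n-1)e_{n-1}}$ is exactly $B_0T$), and finish with Lemma~\ref{lem:symmetric}. The only difference is the Cauchy--Schwarz pairing: the paper pairs $(a-z_j)-A_0$ with $1/(a-z_j)-\mu/2$ and then needs an expansion and an algebraic identity to reach that inequality, while your $\xi_j$-weighted pairing gives it in one step (both express nonnegativity of the Gram determinant of $\mathbf 1$, $(a-z_j)_j$, $(1/\overline{a-z_j})_j$, via a Schur complement pivoted on $\mathbf 1$ in the paper and on $(a-z_j)_j$ in yours).
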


\begin{proof}
Assume first that \(C_0>0\).  Set
\[
T
=
\frac1{n-1}\sum_{j=1}^{n-1}
\left|\frac1{a-z_j}\right|^2,
\qquad
M
=
\frac1{n-1}\sum_{j=1}^{n-1}\frac1{a-z_j}
=
\frac{\mu}{2},
\]
where the second identity follows from Lemma~\ref{lem:inverse-root}.

By the definitions of \(A_0,B_0,C_0\),
\[
\frac1{n-1}\sum_{j=1}^{n-1}
|(a-z_j)-A_0|^2
=
C_0,
\]
while
\[
\frac1{n-1}\sum_{j=1}^{n-1}
\left|
\frac1{a-z_j}-M
\right|^2
=
T-|M|^2.
\]
Moreover,
\[
\begin{aligned}
\frac1{n-1}\sum_{j=1}^{n-1}
\bigl((a-z_j)-A_0\bigr)
\left(\frac1{a-z_j}-M\right)
&=
1-A_0M.
\end{aligned}
\]
Hence the Cauchy--Schwarz inequality gives
\begin{equation}\label{eq:root-cauchy}
|1-A_0M|^2
\le
C_0(T-|M|^2).
\end{equation}

Expanding \eqref{eq:root-cauchy}, and using
\(C_0=B_0-A_0^2\), we obtain
\[
C_0T
\ge
1-2A_0\Re M+B_0|M|^2.
\]
Multiplying by \(B_0\) and observing that
\[
C_0+|A_0-B_0M|^2
=
B_0\bigl(1-2A_0\Re M+B_0|M|^2\bigr),
\]
we obtain
\begin{equation}\label{eq:root-covariance}
B_0T
\ge
1+\frac{|A_0-B_0M|^2}{C_0}.
\end{equation}

We now apply Lemma~\ref{lem:symmetric} to
\[
\xi_j
=
\frac{|a-z_j|^2}{B_0},
\qquad 1\le j\le n-1.
\]
By the definition of \(B_0\),
\[
\sum_{j=1}^{n-1}\xi_j=n-1.
\]
Since every \(\xi_j>0\),
\[
\begin{aligned}
e_{n-2}(\boldsymbol{\xi})
&=
e_{n-1}(\boldsymbol{\xi})
\sum_{j=1}^{n-1}\frac1{\xi_j}\\
&=
e_{n-1}(\boldsymbol{\xi})
B_0\sum_{j=1}^{n-1}
\frac1{|a-z_j|^2}\\
&=
e_{n-1}(\boldsymbol{\xi})(n-1)B_0T.
\end{aligned}
\]
Therefore \eqref{eq:symmetric} gives
\[
e_{n-1}(\boldsymbol{\xi})
\bigl((n-1)B_0T-(n-4)\bigr)
\le3.
\]
Using \eqref{eq:root-covariance}, we have
\[
\begin{aligned}
(n-1)B_0T-(n-4)
&\ge
(n-1)
\left(
1+\frac{|A_0-B_0M|^2}{C_0}
\right)
-(n-4)\\
&=
3+
(n-1)\frac{|A_0-B_0M|^2}{C_0}\\
&=
3(1+\gamma),
\end{aligned}
\]
because \(M=\mu/2\).  Consequently,
\begin{equation}\label{eq:symmetric-product-consequence}
e_{n-1}(\boldsymbol{\xi})(1+\gamma)\le1.
\end{equation}

Finally,
\[
e_{n-1}(\boldsymbol{\xi})
=
\frac{\prod_{j=1}^{n-1}|a-z_j|^2}{B_0^{n-1}}.
\]
Since
$
P'(a)
=
\prod_{j=1}^{n-1}(a-z_j)
=
n\prod_{j=1}^{n-1}(a-\zeta_j),
$
and \(q_j=(a-\zeta_j)^{-1}\), we obtain
\[
\prod_{j=1}^{n-1}|a-z_j|^2
=
|P'(a)|^2
=
\frac{n^2}{\prod_{j=1}^{n-1}|q_j|^2}.
\]
Thus
\[
e_{n-1}(\boldsymbol{\xi})
=
\frac{n^2}
{B_0^{n-1}\prod_{j=1}^{n-1}|q_j|^2}.
\]
Substituting this identity into
\eqref{eq:symmetric-product-consequence} gives \eqref{eq:root-product}.

It remains to consider \(C_0=0\).  In this case,
\[
\frac1{n-1}\sum_{j=1}^{n-1}
|(a-z_j)-A_0|^2
=
C_0
=
0,
\]
so
\[
a-z_j=A_0
\qquad
(1\le j\le n-1).
\]
Hence Lemma~\ref{lem:inverse-root} gives
\[
M=\frac1{A_0}=\frac{\mu}{2}.
\]
Since \(B_0=A_0^2\), it follows that
\[
A_0-B_0\frac{\mu}{2}
=
A_0-B_0M
=
0.
\]
Moreover,
\[
B_0^{n-1}
=
\prod_{j=1}^{n-1}|a-z_j|^2
=
\frac{n^2}{\prod_{j=1}^{n-1}|q_j|^2},
\]
so \eqref{eq:root-product} holds, in fact with equality, when
\(\gamma=0\).
\end{proof}

\subsection{Variation of the reciprocal moduli}

\begin{lem}\label{lem:radial-loss}
If \(V>0\), then
\begin{equation}\label{eq:radial-loss}
 \prod_{j=1}^{n-1}|q_j|^2
 \le
 s^{n-1}
 \exp\left(
 -\frac{(n-1)|\mu-as|^2}{2sV}
 \right).
\end{equation}
For \(V=0\), the bound
$
\prod_{j=1}^{n-1}|q_j|^2\le s^{n-1}
$
remains valid.
\end{lem}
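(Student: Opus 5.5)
The plan is to combine a variance-refined arithmetic--geometric mean inequality for the numbers \(r_j:=|q_j|^2\) with a lower bound on their variance coming from the same Cauchy--Schwarz device as in the proof of Lemma~\ref{lem:moments}. The case \(V=0\) is simply \eqref{eq:q-product-upper}, so assume \(V>0\).

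\step{Step 1. A variance lower bound.}
Since \(q_j=(a-\overline{\zeta_j})|q_j|^2\) and \(\sum_j\zeta_j=0\), we have \(\mu-as=-\frac1{n-1}\sum_j\overline{\zeta_j}\,r_j\). Again using \(\sum_j\zeta_j=0\), we may subtract \(s\sum_j\overline{\zeta_j}=0\), which gives
\[
\mu-as=-\frac1{n-1}\sum_{j=1}^{n-1}\overline{\zeta_j}\,(r_j-s).
\]
The Cauchy--Schwarz inequality then yields
\[
|\mu-as|^2\le V_*\cdot\frac1{n-1}\sum_{j=1}^{n-1}(r_j-s)^2\le V\,W,
\qquad
W:=\frac1{n-1}\sum_{j=1}^{n-1}(r_j-s)^2 .
\]
Hence \(W\ge|\mu-as|^2/V\).

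\step{Step 2. A refined logarithmic inequality.}
Write \(u_j=r_j/s-1\), so that \(\sum_j u_j=0\). Since \(r_j<1\), we have \(1+u_j<1/s\). I claim that
\[
\log(1+u)\le u-\frac{s}{2}u^2
\qquad\text{for } -1<u\le 1/s-1 .
\]
For \(u<0\), the standard bound \(\log(1+u)\le u-u^2/2\) applies, and \(s<1\) makes the claim weaker than it. For \(u\ge0\), the standard bound is \(\log(1+u)\le u-\frac{u^2}{2(1+u)}\); one checks it by differentiating, since \(\frac{u^2+2u}{2(1+u)}-\log(1+u)\) vanishes at \(0\) and has derivative \(\frac{u^2}{2(1+u)^2}\ge0\). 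Then \(\frac1{1+u}\ge s\) gives the claim.

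\step{Step 3. Summation.}
Summing Step 2 over \(j\) and using \(\sum_j u_j=0\) gives
\[
\sum_{j=1}^{n-1}\log\frac{r_j}{s}\le-\frac s2\sum_{j=1}^{n-1}u_j^2=-\frac{(n-1)W}{2s}.
\]
Inserting the bound \(W\ge|\mu-as|^2/V\) from Step 1 and exponentiating yields \eqref{eq:radial-loss}.

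The only delicate point is to obtain the denominator \(sV\) rather than \(s^2V\). This is handled in Step 2 by exploiting the upper bound \(r_j<1\), which caps \(1+u_j\) at \(1/s\) and converts the usual \(u^2/(2(1+u))\) loss into \(s u^2/2\).
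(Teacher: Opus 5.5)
Your proof is correct and follows the paper's argument. It uses the same Cauchy--Schwarz lower bound $\sum_j(|q_j|^2-s)^2\ge (n-1)|\mu-as|^2/V$ and the same logarithmic inequality: after writing $r_j=s(1+u_j)$, your Step~2 bound is exactly \eqref{eq:log-loss} evaluated at $|q_j|^2=r_j$. The only cosmetic difference is how that inequality is verified. You split at $u=0$ and use two standard bounds for $\log(1+u)$, whereas the paper differentiates its auxiliary function once and reads off the sign of $(u-1)(u-s)/(su)$.
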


\begin{proof}
We begin with the elementary inequality
\begin{equation}\label{eq:log-loss}
 \log u
 \le
 \log s+\frac{u-s}{s}-\frac{(u-s)^2}{2s},
 \qquad 0<u,s\le1.
\end{equation}
Indeed, let
\[
h(u)
=
\log u-\log s-\frac{u-s}{s}
+\frac{(u-s)^2}{2s}.
\]
Then \(h(s)=0\), while
\[
h'(u)
=
\frac{(u-1)(u-s)}{su}.
\]
If \(0<u<s\), then \(h'(u)\ge0\) on \((0,s)\), and hence
\(h(u)\le h(s)=0\).  If \(s<u\le1\), then \(h'(u)\le0\) on
\((s,1]\), and again \(h(u)\le h(s)=0\).  This proves
\eqref{eq:log-loss}.

We now apply \eqref{eq:log-loss} with \(u=|q_j|^2\) and sum over
\(1\le j\le n-1\).  Since
$
\sum_{j=1}^{n-1}(|q_j|^2-s)=0
$
by the definition of \(s\), the linear terms cancel, and we obtain
\begin{equation}\label{eq:radial-log-product}
\log\prod_{j=1}^{n-1}|q_j|^2
\le
(n-1)\log s
-\frac1{2s}
\sum_{j=1}^{n-1}(|q_j|^2-s)^2.
\end{equation}

It remains to estimate the last sum from below.  Since
\[
\mu-as
=
-\frac1{n-1}
\sum_{j=1}^{n-1}
\overline{\zeta_j}(|q_j|^2-s),
\]
the Cauchy--Schwarz inequality gives
\[
\begin{aligned}
|\mu-as|^2
&\le
\left(
\frac1{n-1}\sum_{j=1}^{n-1}|\zeta_j|^2
\right)
\left(
\frac1{n-1}\sum_{j=1}^{n-1}(|q_j|^2-s)^2
\right)\\
&=
V_*
\left(
\frac1{n-1}\sum_{j=1}^{n-1}(|q_j|^2-s)^2
\right)\\
&\le
V
\left(
\frac1{n-1}\sum_{j=1}^{n-1}(|q_j|^2-s)^2
\right).
\end{aligned}
\]
Thus, when \(V>0\),
\begin{equation}\label{eq:radial-variance-lower}
\sum_{j=1}^{n-1}(|q_j|^2-s)^2
\ge
\frac{(n-1)|\mu-as|^2}{V}.
\end{equation}
Substituting \eqref{eq:radial-variance-lower} into
\eqref{eq:radial-log-product} yields
\[
\log\prod_{j=1}^{n-1}|q_j|^2
\le
(n-1)\log s
-
\frac{(n-1)|\mu-as|^2}{2sV}.
\]
Exponentiating gives \eqref{eq:radial-loss}.

If \(V=0\), no division by \(V\) is needed.  The arithmetic--geometric
mean inequality and the definition of \(s\) give directly
\[
\prod_{j=1}^{n-1}|q_j|^2
\le
\left(
\frac1{n-1}\sum_{j=1}^{n-1}|q_j|^2
\right)^{n-1}
=
s^{n-1},
\]
as claimed.
\end{proof}

We next use the following product inequality from Tao's exposition. Its nonvanishing hypothesis is automatic for reciprocal critical-point coordinates.

\begin{lem}[{\cite[Lemma~13]{Tao26}}]\label{lem:tao-product}
If $0<|\xi_j|\le1$ for $1\le j\le N$, then
\[
 \left(\prod_{j=1}^N|\xi_j|\right)
 \sum_{j=1}^N\left|\frac1{\xi_j}-\overline{\xi_j}\right|
 \le1-\prod_{j=1}^N|\xi_j|^2.
\]
\end{lem}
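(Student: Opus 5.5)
The plan is to reduce the statement to an inequality between the moduli alone, and then to prove that inequality by induction on \(N\). Write \(r_j=|\xi_j|\in(0,1]\) and \(P_N=\prod_{j=1}^N r_j\). Since
\[
\frac1{\xi_j}-\overline{\xi_j}=\frac{1-|\xi_j|^2}{\xi_j},
\qquad\text{we have}\qquad
\left|\frac1{\xi_j}-\overline{\xi_j}\right|=\frac{1-r_j^2}{r_j}.
\]
So the lemma is equivalent to
\[
P_N\sum_{j=1}^N\left(\frac1{r_j}-r_j\right)\le 1-P_N^2 .
\]
In particular, the arguments of the \(\xi_j\) play no role.

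For \(N=1\) the left-hand side is \(r_1(1/r_1-r_1)=1-r_1^2\), so the inequality holds with equality. For the inductive step, write \(P'=P_{N-1}\), \(r=r_N\), and \(S'=\sum_{j<N}(1/r_j-r_j)\), and assume \(P'S'\le 1-P'^2\). Then
\[
P_N\sum_{j=1}^N\left(\frac1{r_j}-r_j\right)
= r\,(P'S')+P'(1-r^2)
\le r(1-P'^2)+P'(1-r^2),
\]
where the inequality uses \(r\ge0\).

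It therefore suffices to show that
\[
E:=1-P'^2r^2-r(1-P'^2)-P'(1-r^2)\ge0 .
\]
The key observation is that \(E\) vanishes when \(r=1\) and when \(P'=1\). Matching coefficients then gives the factorization
\[
E=(1-r)(1-P')(1-rP').
\]
All three factors are nonnegative because \(r,P'\in(0,1]\), so \(E\ge0\) and the induction closes.

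The only step that needs care is finding this factorization of \(E\); the rest is immediate. We do not even need the sharper form of the hypothesis: \(0<|\xi_j|\le1\) is used only to ensure \(1/\xi_j\) is defined and that \(r\) and \(P'\) lie in \((0,1]\).
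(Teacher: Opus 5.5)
Your proof is correct. The reduction to moduli uses $|\xi_j|\le1$ exactly where it is needed. The recursion $P_N\sum_{j\le N}(1/r_j-r_j)=r\,(P'S')+P'(1-r^2)$ is right, and expanding $(1-r)(1-P')(1-rP')$ does reproduce $E$.

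The paper gives no proof of this lemma; it quotes it from \cite[Lemma~13]{Tao26}. So your induction is a self-contained argument, not a rerun of one in the text.

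A more conceptual route, closer to the machinery the paper uses in Lemma~\ref{lem:packing}, is to set $u_j=-\log|\xi_j|\ge0$. Then $|1/\xi_j-\overline{\xi_j}|=2\sinh u_j$ and $\bigl(1-\prod_j|\xi_j|^2\bigr)/\prod_j|\xi_j|=2\sinh\bigl(\sum_j u_j\bigr)$. The lemma is therefore exactly the superadditivity $\sum_j\sinh u_j\le\sinh\bigl(\sum_j u_j\bigr)$ on $[0,\infty)$. This follows at once from convexity of $\sinh$ there and $\sinh 0=0$, with no induction.

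Your factorization is the two-term case of this in disguise. With $r=e^{-b}$ and $P'=e^{-a}$, dividing $E$ by $2rP'$ gives the identity $\sinh(a+b)-\sinh a-\sinh b=4\sinh(a/2)\sinh(b/2)\sinh\bigl((a+b)/2\bigr)$.

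The $\sinh$ form also explains where \eqref{eq:product-radial} sits relative to Lemma~\ref{lem:packing}. There the same functional $\sum_j|1/q_j-\overline{q_j}|=\sum_j 2\sinh(t_j/2)$, with $t_j=-\log|q_j|^2$, is maximized under the extra cap $t_j<\lambda$. Dropping the cap puts all of $-\log\prod_j|q_j|^2$ into one coordinate and yields \eqref{eq:product-radial} directly.

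Your version, by contrast, is purely polynomial and needs neither logarithms nor convexity. That makes it the easier of the two to check line by line or to formalize.
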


This inequality and the reciprocal second moment give two bounds on $|a-\mu|$.

\begin{lem}\label{lem:radial-bounds}
If $(n-1)(1-s)<1$, then
\begin{equation}\label{eq:finite-radial}
 |a-\mu|^2\le\frac{(1-s)^2}{1-(n-1)(1-s)},\qquad
 V_*\le1+\frac{1-s}{1-(n-1)(1-s)}-a^2.
\end{equation}
Without this additional restriction,
\begin{equation}\label{eq:product-radial}
 (n-1)|a-\mu|
 \le
 \left(\prod_{j=1}^{n-1}|q_j|^2\right)^{-1/2}
 -
 \left(\prod_{j=1}^{n-1}|q_j|^2\right)^{1/2}.
\end{equation}
\end{lem}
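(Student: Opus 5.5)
The plan is to write everything in terms of the reciprocal moduli \(|q_j|^2\), using the identity \(1/\overline{q_j}=a-\overline{\zeta_j}\). Since \(a\) is real and \(\sum_{j}\zeta_j=0\), averaging gives
\[
\frac1{n-1}\sum_{j=1}^{n-1}\frac1{\overline{q_j}}=a,
\qquad\text{hence}\qquad
a-\mu=\frac1{n-1}\sum_{j=1}^{n-1}\Bigl(\frac1{\overline{q_j}}-q_j\Bigr).
\]
Each summand has modulus
\[
\Bigl|\frac1{\overline{q_j}}-q_j\Bigr|=\frac{1-|q_j|^2}{|q_j|},
\]
because \(q_j\) and \(1/\overline{q_j}\) lie on the same ray from the origin. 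Both assertions of Lemma~\ref{lem:radial-bounds} start from this representation.

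\emph{Proof of \eqref{eq:product-radial}.} Apply Lemma~\ref{lem:tao-product} with \(N=n-1\) and \(\xi_j=\overline{q_j}\), which is allowed since \(0<|q_j|<1\). Then
\[
\Bigl|\frac1{\xi_j}-\overline{\xi_j}\Bigr|=\Bigl|\frac1{\overline{q_j}}-q_j\Bigr|.
\]
The triangle inequality applied to the representation above gives
\[
(n-1)|a-\mu|\le\sum_{j=1}^{n-1}\Bigl|\frac1{\overline{q_j}}-q_j\Bigr|\le\frac{1-\prod_{j}|q_j|^2}{\prod_{j}|q_j|}.
\]
This is exactly \eqref{eq:product-radial}.

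\emph{Proof of \eqref{eq:finite-radial}, first inequality.} Instead of Tao's lemma, apply Cauchy--Schwarz to the average of \((1-|q_j|^2)/|q_j|\), splitting it as \(\sqrt{1-|q_j|^2}\cdot\sqrt{1-|q_j|^2}/|q_j|\). This yields
\[
|a-\mu|^2\le(1-s)\left(\frac1{n-1}\sum_{j=1}^{n-1}\frac1{|q_j|^2}-1\right).
\]
It remains to bound the reciprocal second moment. Set \(\varepsilon_j=1-|q_j|^2\in[0,1)\) and \(E=\sum_j\varepsilon_j=(n-1)(1-s)\), which is less than \(1\) by hypothesis. Then \(\varepsilon_j\le E\) for each \(j\), so
\[
\frac1{|q_j|^2}-1=\frac{\varepsilon_j}{1-\varepsilon_j}\le\frac{\varepsilon_j}{1-E}.
\]
Averaging over \(j\) gives
\[
\frac1{n-1}\sum_{j=1}^{n-1}\frac1{|q_j|^2}-1\le\frac{1-s}{1-E}.
\]
Substituting this into the Cauchy--Schwarz bound gives the first inequality in \eqref{eq:finite-radial}.

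\emph{Proof of \eqref{eq:finite-radial}, second inequality.} Since \(\sum_j\zeta_j=0\) and \(a\) is real, expanding \(|a-\zeta_j|^2\) and averaging gives
\[
\frac1{n-1}\sum_{j=1}^{n-1}\frac1{|q_j|^2}=\frac1{n-1}\sum_{j=1}^{n-1}|a-\zeta_j|^2=a^2+V_*.
\]
Combining this with the same bound on the reciprocal second moment gives
\[
V_*\le1+\frac{1-s}{1-(n-1)(1-s)}-a^2.
\]

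The only nonroutine point is the starting identity \(a=\frac1{n-1}\sum_j 1/\overline{q_j}\), together with choosing \(\xi_j=\overline{q_j}\) rather than \(q_j\) so that Lemma~\ref{lem:tao-product} applies directly. The \(\varepsilon_j\)-estimate is where the hypothesis \((n-1)(1-s)<1\) is needed, and it is elementary.
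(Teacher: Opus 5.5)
Your proof is correct. For \eqref{eq:product-radial} and for the bound on $V_*$ it is the paper's argument. The paper applies Lemma~\ref{lem:tao-product} with $\xi_j=q_j$ and uses $|a-\mu|=|a-\overline{\mu}|$, so your choice $\xi_j=\overline{q_j}$ is only cosmetic: $|1/\xi-\overline{\xi}|=(1-|\xi|^2)/|\xi|$ is invariant under conjugation. The $V_*$ estimate rests on the same pointwise bound $|q_j|^2\ge 1-(n-1)(1-s)$ in both treatments.

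The one genuine difference is the first inequality in \eqref{eq:finite-radial}. The paper pulls the pointwise lower bound $|q_j|\ge\sqrt{1-(n-1)(1-s)}$ out of the average $\frac1{n-1}\sum_j(1-|q_j|^2)/|q_j|$. This gives $|a-\mu|\le(1-s)/\sqrt{1-(n-1)(1-s)}$ in one line. You instead apply Cauchy--Schwarz first and obtain the unconditional inequality $|a-\mu|^2\le(1-s)(a^2+V_*-1)$. This is exactly the second inequality of \eqref{eq:moment-a} with $V=V_*$. You then insert the same reciprocal second-moment bound used for $V_*$.

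The final constants coincide, so nothing is gained numerically. The paper's intermediate quantity is even slightly smaller, by Cauchy--Schwarz. What your route adds is structural: it shows the first inequality of \eqref{eq:finite-radial} is a formal consequence of the second one combined with Lemma~\ref{lem:moments}. The paper's route is shorter and needs nothing beyond the pointwise bound.
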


\begin{proof}
Assume first that $(n-1)(1-s)<1$.  Since
$
\sum_{j=1}^{n-1}(1-|q_j|^2)
=
(n-1)(1-s)
$
and every summand is nonnegative, we have, for each \(j\),
\begin{equation}\label{eq:q-pointwise-lower}
|q_j|^2
\ge
1-(n-1)(1-s)
>0.
\end{equation}

Moreover, since
$
1/{q_j}=a-\zeta_j
$
and
$
\sum_{j=1}^{n-1}\zeta_j=0,
$
we have
\[
\frac1{n-1}\sum_{j=1}^{n-1}\frac1{q_j}=a.
\]
Therefore
\[
\begin{aligned}
|a-\mu|
&=
|a-\overline{\mu}|\\
&=
\left|
\frac1{n-1}\sum_{j=1}^{n-1}
\left(\frac1{q_j}-\overline{q_j}\right)
\right|\\
&\le
\frac1{n-1}\sum_{j=1}^{n-1}
\left|\frac1{q_j}-\overline{q_j}\right|\\
&=
\frac1{n-1}\sum_{j=1}^{n-1}
\frac{1-|q_j|^2}{|q_j|}.
\end{aligned}
\]
Using \eqref{eq:q-pointwise-lower}, we obtain
\[
\begin{aligned}
|a-\mu|
&\le
\frac{1}
{\sqrt{1-(n-1)(1-s)}}
\frac1{n-1}\sum_{j=1}^{n-1}(1-|q_j|^2)\\
&=
\frac{1-s}
{\sqrt{1-(n-1)(1-s)}}.
\end{aligned}
\]
Squaring gives the first inequality in \eqref{eq:finite-radial}.

To estimate \(V_*\), observe that
\[
\begin{aligned}
V_*
&=
\frac1{n-1}\sum_{j=1}^{n-1}|\zeta_j|^2\\
&=
\frac1{n-1}\sum_{j=1}^{n-1}
\left|a-\frac1{q_j}\right|^2\\
&=
\frac1{n-1}\sum_{j=1}^{n-1}|q_j|^{-2}-a^2,
\end{aligned}
\]
where we again used
$
(n-1)^{-1}\sum_j q_j^{-1}=a.
$
Furthermore,
\[
\frac1{|q_j|^2}
=
1+\frac{1-|q_j|^2}{|q_j|^2}
\le
1+
\frac{1-|q_j|^2}{1-(n-1)(1-s)}
\]
by \eqref{eq:q-pointwise-lower}.  Averaging over \(j\) therefore gives
\[
\frac1{n-1}\sum_{j=1}^{n-1}|q_j|^{-2}
\le
1+
\frac{1-s}{1-(n-1)(1-s)}.
\]
Consequently,
\[
V_*
\le
1+
\frac{1-s}{1-(n-1)(1-s)}
-a^2,
\]
which proves the second inequality in \eqref{eq:finite-radial}.

We now prove \eqref{eq:product-radial}, for which no assumption on
\((n-1)(1-s)\) is needed.  Applying Lemma~\ref{lem:tao-product} with
\(N=n-1\) and \(\xi_j=q_j\), we obtain
\begin{equation}\label{eq:tao-product-applied}
\left(\prod_{j=1}^{n-1}|q_j|\right)
\sum_{j=1}^{n-1}
\left|
\frac1{q_j}-\overline{q_j}
\right|
\le
1-\prod_{j=1}^{n-1}|q_j|^2.
\end{equation}
On the other hand,
\[
\begin{aligned}
\sum_{j=1}^{n-1}
\left(\frac1{q_j}-\overline{q_j}\right)
&=
(n-1)a-(n-1)\overline{\mu}\\
&=
(n-1)(a-\overline{\mu}),
\end{aligned}
\]
and hence, by the triangle inequality,
\begin{equation}\label{eq:radial-triangle}
(n-1)|a-\mu|
\le
\sum_{j=1}^{n-1}
\left|
\frac1{q_j}-\overline{q_j}
\right|.
\end{equation}
Combining \eqref{eq:tao-product-applied} and
\eqref{eq:radial-triangle} yields
\[
(n-1)|a-\mu|
\prod_{j=1}^{n-1}|q_j|
\le
1-\prod_{j=1}^{n-1}|q_j|^2.
\]
Since every \(q_j\ne0\), division by
\(\prod_{j=1}^{n-1}|q_j|\) gives
\[
\begin{aligned}
(n-1)|a-\mu|
&\le
\left(\prod_{j=1}^{n-1}|q_j|\right)^{-1}
-
\prod_{j=1}^{n-1}|q_j|\\
&=
\left(\prod_{j=1}^{n-1}|q_j|^2\right)^{-1/2}
-
\left(\prod_{j=1}^{n-1}|q_j|^2\right)^{1/2},
\end{aligned}
\]
which is \eqref{eq:product-radial}.
\end{proof}

Meir and Sharma \cite[Lemma~2]{MS69} used the same extremal principle to
maximize an inverse-power sum under pointwise and product constraints.
We shall need it here for two convex functions, so we include the short
argument.

\begin{lem}\label{lem:packing}
Suppose $
0<L\le\prod_{j=1}^{n-1}|q_j|^2,
s\le s_+,
$
and let \(0<\eta<s_+\le1\) satisfy
\begin{equation}\label{eq:eta}
 \eta\left(\frac{(n-1)s_+-\eta}{n-2}\right)^{n-2}<L.
\end{equation}
Then
\[
|q_j|^2>\eta,
\qquad 1\le j\le n-1.
\]
Assume in addition that \(L\le1\), and choose
\(k\in\{0,\ldots,n-2\}\) such that
$
\eta^{k+1}\le L\le\eta^k.
$
Set
$
z={L}/{\eta^k}.
$
Then
\begin{align}
 (n-1)|a-\mu|
 &\le
 M:=
 \frac{k(1-\eta)}{\sqrt\eta}
 +\frac{1-z}{\sqrt z},
 \label{eq:packing-distance}\\
 V_*
 &\le
 T-a^2,\qquad
 T:=
 \frac{k/\eta+1/z+n-k-2}{n-1}.
 \label{eq:packing-variance}
\end{align}
\end{lem}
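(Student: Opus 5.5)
Write \(p_j=|q_j|^2\in(0,1)\), so that \(\prod_j p_j\ge L\) and \(\sum_j p_j=(n-1)s\le(n-1)s_+\).

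\textbf{Lower bound \(p_j>\eta\).} Fix \(j\) and suppose \(p_j\le\eta\). The arithmetic--geometric mean inequality applied to the other \(n-2\) factors gives
\[
\prod_k p_k\le p_j\left(\frac{(n-1)s_+-p_j}{n-2}\right)^{n-2}.
\]
The function \(h(u)=u\bigl(((n-1)s_+-u)/(n-2)\bigr)^{n-2}\) has logarithmic derivative \(1/u-(n-2)/((n-1)s_+-u)\). This is positive when \(u<s_+\), so \(h\) is increasing on \((0,s_+]\). Since \(p_j\le\eta<s_+\), we get \(L\le\prod_k p_k\le h(p_j)\le h(\eta)<L\) by \eqref{eq:eta}, a contradiction. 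Hence \(p_j>\eta\) for every \(j\).

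\textbf{Reduction for \eqref{eq:packing-distance} and \eqref{eq:packing-variance}.} As in the proof of Lemma~\ref{lem:radial-bounds},
\[
(n-1)|a-\mu|\le\sum_j\frac{1-p_j}{\sqrt{p_j}},\qquad V_*=\frac1{n-1}\sum_j\frac1{p_j}-a^2.
\]
It therefore suffices to maximize \(\sum_j\phi(p_j)\) for the two functions \(\phi_1(p)=(1-p)/\sqrt p\) and \(\phi_2(p)=1/p\). The constraints are \(p_j\in(\eta,1]\) and \(\prod_j p_j\ge L\); the sum constraint is dropped, which only enlarges the feasible set.

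\textbf{Change of variables.} Put \(y_j=-\log p_j\in[0,Y]\) with \(Y=-\log\eta\), subject to \(\sum_j y_j\le\Lambda:=-\log L\). Then \(\phi(p)=g(y)\) with \(g_1(y)=e^{y/2}-e^{-y/2}\) and \(g_2(y)=e^{y}\). Both are increasing and convex on \([0,\infty)\) (\(g_1''=g_1/4\ge0\)), and \(g_1(0)=0\), \(g_2(0)=1\). Monotonicity lets us raise coordinates, so we may assume \(\sum_j y_j=\Lambda\). On the compact polytope \([0,Y]^{n-1}\cap\{\sum_j y_j=\Lambda\}\) the convex function \(\sum_j g(y_j)\) attains its maximum at a vertex. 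A vertex has at most one coordinate strictly between \(0\) and \(Y\), and the others equal to \(0\) or \(Y\).

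\textbf{Counting at the vertex.} Because \(\eta^{k+1}\le L\le\eta^k\), we have \(kY\le\Lambda\le(k+1)Y\). So the vertex has exactly \(k\) coordinates equal to \(Y\) and one equal to \(\Lambda-kY\), i.e.\ \(p=L/\eta^k=z\in[\eta,1]\); the remaining \(n-k-2\) coordinates are \(0\). The boundary cases \(z=1\) or \(z=\eta\) just merge with the other coordinate types and give the same formula, and \(k\le n-2\) guarantees there are enough coordinates.

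\textbf{Evaluation.} For \(\phi_1\) the vertex value is \(k(1-\eta)/\sqrt\eta+(1-z)/\sqrt z=M\), which gives \eqref{eq:packing-distance}. For \(\phi_2\) it is \(k/\eta+1/z+(n-k-2)\); dividing by \(n-1\) gives \(T\), and subtracting \(a^2\) gives \eqref{eq:packing-variance}.

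\textbf{Main obstacle.} The delicate point is the extremal step. Convexity in the \(y\)-variables, together with monotonicity to saturate the product constraint, gives the vertex structure. One must also check that the open bound \(p_j>\eta\) can be relaxed to the closed bound \(p_j\ge\eta\) without harm, which holds because the maximum over the closure is still an upper bound.
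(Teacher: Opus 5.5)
Your proof is correct and follows essentially the same route as the paper. You use the same AM--GM contradiction for $|q_j|^2>\eta$, the same substitution $t_j=-\log|q_j|^2$ with the convex nondecreasing functions $2\sinh(t/2)$ and $e^t$, and the same extremal configuration; the only cosmetic difference is that you reach it via ``a convex function on a polytope is maximized at a vertex,'' where the paper uses a pairwise mass-transfer step. The one point you leave implicit is that a $k$ with $\eta^{k+1}\le L\le\eta^k$ actually exists: the paper derives this from \eqref{eq:eta}, since $((n-1)s_+-\eta)/(n-2)>\eta$ forces $L>\eta^{n-1}$.
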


\begin{proof}
We first prove the pointwise lower bound.  Suppose, to the contrary,
that
$
|q_j|^2=x\le\eta
$
for some \(j\).  Since
\[
\sum_{\ell=1}^{n-1}|q_\ell|^2
=(n-1)s
\le(n-1)s_+,
\]
the arithmetic--geometric mean inequality applied to the remaining
\(n-2\) terms gives
\begin{equation}\label{eq:packing-product-upper}
\prod_{\ell=1}^{n-1}|q_\ell|^2
\le
x\left(
\frac{(n-1)s_+-x}{n-2}
\right)^{n-2}.
\end{equation}
The function
\[
x\longmapsto
x\left(
\frac{(n-1)s_+-x}{n-2}
\right)^{n-2}
\]
is increasing on \(0\le x\le s_+\).  Indeed, apart from a positive
constant, its derivative has the sign of
$
(n-1)s_+-(n-1)x.
$
Hence \eqref{eq:packing-product-upper} and \(x\le\eta\) imply
\[
\prod_{\ell=1}^{n-1}|q_\ell|^2
\le
\eta\left(
\frac{(n-1)s_+-\eta}{n-2}
\right)^{n-2}
<L,
\]
contrary to the assumption
\(L\le\prod_{\ell=1}^{n-1}|q_\ell|^2\).
Thus
\begin{equation}\label{eq:packing-pointwise}
|q_j|^2>\eta
\qquad (1\le j\le n-1).
\end{equation}

Notice also that \(s_+>\eta\) implies
\[
\frac{(n-1)s_+-\eta}{n-2}>\eta.
\]
Therefore \eqref{eq:eta} gives
$
L>\eta^{n-1}.
$
Together with \(L\le1\), this guarantees the existence of
\(k\in\{0,\ldots,n-2\}\) satisfying
\[
\eta^{k+1}\le L\le\eta^k.
\]
In particular,
\[
\eta\le z=\frac{L}{\eta^k}\le1.
\]

We now turn to the extremal argument.  Set
$
t_j=-\log|q_j|^2,
$ and $
\lambda=-\log\eta.
$
By \eqref{eq:packing-pointwise},
\[
0\le t_j<\lambda.
\]
Moreover,
\[
\sum_{j=1}^{n-1}t_j
=
-\log\prod_{j=1}^{n-1}|q_j|^2
\le
-\log L.
\]
Since \(L>\eta^{n-1}\),
\[
-\log L<(n-1)\lambda.
\]

Let \(f\) be any convex nondecreasing function on \([0,\lambda]\).
Because \(f\) is nondecreasing, enlarging
\(\sum_jt_j\) up to \(-\log L\) cannot decrease
\(\sum_jf(t_j)\).  We may therefore impose
$
\sum_{j=1}^{n-1}t_j=-\log L.
$
If two coordinates \(t_i,t_j\) both lie strictly between \(0\) and
\(\lambda\), move an amount
$
\delta=\min\{t_i,\lambda-t_j\}
$
from the smaller coordinate to the larger one.  Thus
\[
(t_i,t_j)
\longmapsto
(t_i-\delta,t_j+\delta).
\]
The sum is unchanged, while convexity of \(f\) shows that
\[
f(t_i-\delta)+f(t_j+\delta)
\ge
f(t_i)+f(t_j).
\]
Repeating this operation leaves at most one coordinate strictly between
\(0\) and \(\lambda\).

Since
$
\eta^{k+1}\le L\le\eta^k,
$
taking logarithms gives
\[
k\lambda
\le
-\log L
\le
(k+1)\lambda.
\]
Furthermore,
\[
-\log L
=
k\lambda-\log z.
\]
Thus the extremal vector has
\[
k\ \text{coordinates equal to }\lambda,\qquad
1\ \text{coordinate equal to }-\log z,
\]
and the remaining \(n-k-2\) coordinates equal to \(0\).  Consequently,
for every convex nondecreasing \(f\),
\begin{equation}\label{eq:packing-convex}
\sum_{j=1}^{n-1}f(t_j)
\le
k f(\lambda)
+f(-\log z)
+(n-k-2)f(0).
\end{equation}

We first apply \eqref{eq:packing-convex} to
$
f(t)=2\sinh(t/2).
$
Since
$
1/{q_j}-\overline{q_j}
=
{(1-|q_j|^2)}/{q_j},
$
we have
\[
\left|
\frac1{q_j}-\overline{q_j}
\right|
=
\frac{1-|q_j|^2}{|q_j|}
=
2\sinh(t_j/2).
\]
Also,
\[
\sum_{j=1}^{n-1}
\left(
\frac1{q_j}-\overline{q_j}
\right)
=
(n-1)(a-\overline{\mu}).
\]
Hence
\begin{equation}\label{eq:packing-distance-preliminary}
(n-1)|a-\mu|
\le
\sum_{j=1}^{n-1}2\sinh(t_j/2).
\end{equation}
Now
\[
2\sinh(\lambda/2)
=
\frac{1-\eta}{\sqrt\eta},
\qquad
2\sinh\!\left(\frac{-\log z}{2}\right)
=
\frac{1-z}{\sqrt z}.
\]
Applying \eqref{eq:packing-convex} to
\eqref{eq:packing-distance-preliminary} therefore gives
\[
(n-1)|a-\mu|
\le
\frac{k(1-\eta)}{\sqrt\eta}
+\frac{1-z}{\sqrt z},
\]
which proves \eqref{eq:packing-distance}.

For the variance estimate, apply \eqref{eq:packing-convex} to
$
f(t)=e^t.
$
Since
$
e^{t_j}=|q_j|^{-2},
$
and
$
\frac1{n-1}\sum_{j=1}^{n-1}|q_j|^{-2}
=
a^2+V_*,
$
we obtain
\[
\begin{aligned}
(n-1)(a^2+V_*)
&=
\sum_{j=1}^{n-1}e^{t_j}\\
&\le
\frac{k}{\eta}
+\frac1z
+n-k-2.
\end{aligned}
\]
Dividing by \(n-1\) and subtracting \(a^2\) gives
\[
V_*
\le
\frac{k/\eta+1/z+n-k-2}{n-1}
-a^2
=
T-a^2,
\]
which is \eqref{eq:packing-variance}.
\end{proof}

\section{Centered coefficients and configurations near equality}\label{sec:coefficients}

The centered interpolation estimates control the full product in terms of
the variance \(v\).  For the small degrees, however, it is useful to retain
the individual coefficients of the centered product.  Write
\[
\bm w=(w_1,\ldots,w_{n-1})\in\C^{n-1}.
\]
Repeated application of Schoenberg's
inequality gives the coefficient estimate needed below.

\begin{lem}\label{lem:coefficients}
For \(2\le k\le n-1\),
\begin{equation}\label{eq:coefficients}
 |e_k(\bm w)|
 \le
 \binom{n-1}{k}
 \left(\frac{k-1}{n-2}v\right)^{k/2}.
\end{equation}
\end{lem}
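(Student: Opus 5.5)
The plan is to read \(e_k(\bm w)\) as a coefficient of the centered polynomial built from the \(w_j\), and then control it by iterating Schoenberg's inequality \eqref{eq:schoenberg} along successive derivatives. Set \(m=n-1\) and
\[
Q(z)=\prod_{j=1}^{m}(z-w_j)=\sum_{r=0}^{m}(-1)^r e_r(\bm w)\,z^{m-r}.
\]
Since \(\sum_j w_j=0\), the zeros of \(Q\) have centroid \(0\). Their mean square is \(\frac1m\sum_j|w_j|^2=v\).

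\emph{Step 1: derivatives preserve the centroid and shrink the mean square.} For a monic-up-to-scaling polynomial of degree \(d\) whose zeros have centroid \(0\), the coefficient of \(z^{d-1}\) vanishes. Hence the derivative also has zero coefficient of \(z^{d-2}\), so its critical points again have centroid \(0\). For \(d\ge3\), Schoenberg's inequality \eqref{eq:schoenberg}, divided by \(d-1\), says that the mean square of the \(d-1\) critical points is at most \(\frac{d-2}{d-1}\) times the mean square of the \(d\) zeros. Iterating from degree \(m\) down to degree \(k\), which requires \(k\ge2\) so that every intermediate degree \(d\) is at least \(3\), the zeros \(\eta_1,\dots,\eta_k\) of \(Q^{(m-k)}\) satisfy a telescoping bound:
\[
\frac1k\sum_{i=1}^k|\eta_i|^2\le v\prod_{d=k+1}^{m}\frac{d-2}{d-1}=\frac{k-1}{m-1}\,v=\frac{k-1}{n-2}\,v .
\]
When \(k=m\) no derivative is taken, and the bound reads \(v\le v\).

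\emph{Step 2: identify \(e_k\) as a product of these zeros.} Differentiate the coefficient expansion \(m-k\) times. The leading coefficient of \(Q^{(m-k)}\) is \(m!/k!\), and its constant term is \((-1)^k e_k(\bm w)\,(m-k)!\). Comparing the constant term with \((m!/k!)\prod_i(-\eta_i)\) gives
\[
e_k(\bm w)=\binom{m}{k}\prod_{i=1}^k\eta_i .
\]

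\emph{Step 3: conclude.} By the arithmetic--geometric mean inequality,
\[
\prod_i|\eta_i|\le\Bigl(\frac1k\sum_i|\eta_i|^2\Bigr)^{k/2}.
\]
Combining this with Steps 1 and 2 gives \eqref{eq:coefficients}.

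The main care point is the bookkeeping in Step 1. One must check that the centroid stays at \(0\) at every stage, since Schoenberg's inequality needs it. One must also check that Schoenberg is only ever applied to polynomials of degree at least \(3\), i.e.\ that \(k\ge2\). Degenerate cases, such as repeated zeros or \(v=0\), cause no difficulty, since \eqref{eq:schoenberg} holds with multiplicity.
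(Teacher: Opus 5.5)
Your proposal is correct and follows the paper's proof essentially step for step. You iterate Schoenberg's inequality along the derivatives of $\prod_j(z-w_j)$ to bound the mean square of the zeros of the $(n-1-k)$-th derivative by $\frac{k-1}{n-2}v$, identify $e_k(\bm w)/\binom{n-1}{k}$ as the product of those zeros by comparing leading and constant coefficients, and finish with AM--GM. The only cosmetic difference is that you absorb $k=n-1$ into the telescoping argument as an empty product, whereas the paper treats that case separately with a direct AM--GM bound.
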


\begin{proof}
Let
$
R(z)=\prod_{j=1}^{n-1}(z-w_j).
$
Since
$
\sum_{j=1}^{n-1}w_j=0,
$
the centroid of the zeros of \(R\) is zero.  The same is true for every
derivative of \(R\).

Fix \(2\le k\le n-2\), and let
\(\lambda_1,\ldots,\lambda_k\) be the zeros of
\(R^{(n-1-k)}\).  Applying Schoenberg's inequality \eqref{eq:schoenberg} successively to
$
R,R',\ldots,R^{(n-k-2)},
$
whose degrees are
$
n-1,n-2,\ldots,k+1,
$
gives
\[
\begin{aligned}
\sum_{j=1}^k|\lambda_j|^2
&\le
\left(
\prod_{m=k+1}^{n-1}\frac{m-2}{m}
\right)
\sum_{j=1}^{n-1}|w_j|^2\\
&=
\frac{k(k-1)}{(n-1)(n-2)}
\,(n-1)v.
\end{aligned}
\]
Thus
\begin{equation}\label{eq:coefficient-second-moment}
\sum_{j=1}^k|\lambda_j|^2
\le
\frac{k(k-1)}{n-2}v.
\end{equation}

Next, write
\[
R(z)
=
z^{n-1}
+\sum_{r=1}^{n-1}(-1)^r e_r(\bm w)z^{n-1-r}.
\]
The leading coefficient of \(R^{(n-1-k)}\) is
$
{(n-1)!}/{k!},
$
whereas its constant term is
$
(-1)^k(n-1-k)!e_k(\bm w).
$
Hence the constant term after making \(R^{(n-1-k)}\) monic is
\begin{equation}\label{eq:coefficient-constant-term}
(-1)^k\frac{e_k(\bm w)}{\binom{n-1}{k}}.
\end{equation}
Since the product of the zeros of this monic polynomial is
\((-1)^k\) times its constant term,
\eqref{eq:coefficient-constant-term} gives
\[
\frac{|e_k(\bm w)|}{\binom{n-1}{k}}
=
\prod_{j=1}^k|\lambda_j|.
\]
The arithmetic--geometric mean inequality, followed by
\eqref{eq:coefficient-second-moment}, now yields
\[
\begin{aligned}
\frac{|e_k(\bm w)|}{\binom{n-1}{k}}
&\le
\left(
\frac1k\sum_{j=1}^k|\lambda_j|^2
\right)^{k/2}\\
&\le
\left(
\frac{k-1}{n-2}v
\right)^{k/2}.
\end{aligned}
\]
This proves \eqref{eq:coefficients} for \(2\le k\le n-2\).

For \(k=n-1\), no differentiation is required.  Since
$
e_{n-1}(\bm w)=\prod_{j=1}^{n-1}w_j,
$
the arithmetic--geometric mean inequality gives
\[
|e_{n-1}(\bm w)|
\le
\left(
\frac1{n-1}\sum_{j=1}^{n-1}|w_j|^2
\right)^{(n-1)/2}
=
v^{(n-1)/2},
\]
which is precisely \eqref{eq:coefficients} in this case.
\end{proof}

For nonnegative integers \(k,p\) and \(d\ge0\), define
\begin{equation}\label{eq:integral-polynomial}
 S_{k,p}(d)
 =
 \sum_{j=0}^p\binom{k+j}{k}d^j,
 \qquad
 I_{k,p}(d)
 =
 \frac{S_{k,p}(d)}
 {(k+p+1)\binom{k+p}{k}}
 =
 \int_0^1t^k(1-t+dt)^p\dd t.
\end{equation}
To verify the integral identity, expand with respect to \(d\):
\[
(1-t+dt)^p
=
\sum_{j=0}^p
\binom pj d^jt^j(1-t)^{p-j}.
\]
Using the beta integral,
\[
\begin{aligned}
\int_0^1t^k(1-t+dt)^p\dd t
&=
\sum_{j=0}^p
\binom pj d^j
\int_0^1t^{k+j}(1-t)^{p-j}\dd t\\
&=
\frac1{(k+p+1)\binom{k+p}{k}}
\sum_{j=0}^p\binom{k+j}{k}d^j,
\end{aligned}
\]
which is the second identity in \eqref{eq:integral-polynomial}.

\begin{lem}\label{lem:coefficient-test}
If \(\mathcal C\ge|a\mu J|\) and \(d\ge|1-a\mu|\), then every normalized
counterexample satisfies
\begin{equation}\label{eq:coefficient-test}
 1\le\mathcal C+d^n
 +a|\mu|\sum_{k=2}^{n-1}
 \left(\frac{a^2v(k-1)}{n-2}\right)^{k/2}
 S_{k,n-1-k}(d).
\end{equation}
\end{lem}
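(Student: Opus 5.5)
We start from the centered identity \eqref{eq:center-identity} and take absolute values:
\[
1\le |a\mu J|+|1-a\mu|^n+na|\mu|\left|\int_0^1\bigl(F(t)-(1-a\mu t)^{n-1}\bigr)\dd t\right|.
\]
The first two terms are bounded by \(\mathcal C\) and \(d^n\) directly from the hypotheses. The work lies in the integral term. Here, unlike in Lemma~\ref{lem:interpolation}, we expand the centered product exactly in the elementary symmetric polynomials of \(\bm w\) instead of estimating it by an interpolation.

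\step{Step 1: exact expansion.}
Since \(q_j=\mu+w_j\), we have \(1-atq_j=(1-a\mu t)-atw_j\). Expanding the product gives
\[
F(t)=\sum_{k=0}^{n-1}(-at)^k(1-a\mu t)^{n-1-k}e_k(\bm w).
\]
The \(k=0\) term is \((1-a\mu t)^{n-1}\). The \(k=1\) term vanishes because \(e_1(\bm w)=\sum_j w_j=0\). Therefore
\[
F(t)-(1-a\mu t)^{n-1}=\sum_{k=2}^{n-1}(-a)^k t^k(1-a\mu t)^{n-1-k}e_k(\bm w).
\]

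\step{Step 2: termwise bounds.}
We next estimate \(|1-a\mu t|\) linearly in \(t\). Writing \(1-a\mu t=(1-t)+t(1-a\mu)\), the triangle inequality gives
\[
|1-a\mu t|\le 1-t+dt \qquad (0\le t\le 1).
\]
Combining this with Lemma~\ref{lem:coefficients} yields
\[
\left|\int_0^1\bigl(F-(1-a\mu t)^{n-1}\bigr)\dd t\right|
\le\sum_{k=2}^{n-1}a^k\binom{n-1}{k}\Bigl(\tfrac{k-1}{n-2}v\Bigr)^{k/2} I_{k,n-1-k}(d).
\]
Here \(I_{k,p}(d)\) is the integral defined in \eqref{eq:integral-polynomial}.

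\step{Step 3: bookkeeping.}
By \eqref{eq:integral-polynomial} with \(p=n-1-k\), we have \((k+p+1)\binom{k+p}{k}=n\binom{n-1}{k}\). Hence
\[
\binom{n-1}{k}I_{k,n-1-k}(d)=\frac{S_{k,n-1-k}(d)}{n}.
\]
Multiplying the bound from Step~2 by \(na|\mu|\) therefore cancels both the factor \(n\) and the binomial coefficient. Since \(a^k(v(k-1)/(n-2))^{k/2}=(a^2v(k-1)/(n-2))^{k/2}\), the result is exactly the sum appearing in \eqref{eq:coefficient-test}.

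The only step that needs care is the constant check \(n\binom{n-1}{k}=(k+p+1)\binom{k+p}{k}\) and confirming that the \(k=1\) term vanishes. Both are routine, so no real obstacle is expected.
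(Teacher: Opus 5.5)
Your proposal is correct and follows the paper's proof essentially step for step. Both arguments take absolute values in \eqref{eq:center-identity}, expand $F$ exactly in the $e_k(\bm w)$ with the $k=1$ term vanishing, and combine the linear bound $|1-a\mu t|\le 1-t+dt$ with Lemma~\ref{lem:coefficients}. Both then use the identity $n\binom{n-1}{k}I_{k,n-1-k}(d)=S_{k,n-1-k}(d)$ to convert the integrals into the stated sum.
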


\begin{proof}
Since \(q_j=\mu+w_j\) and \(e_1(\bm w)=0\), expansion of the product gives
\begin{equation}\label{eq:product-expansion}
 F(t)
 =
 (1-a\mu t)^{n-1}
 +
 \sum_{k=2}^{n-1}
 (-at)^k e_k(\bm w)(1-a\mu t)^{n-1-k}.
\end{equation}

We first establish a convenient bound for the remaining linear factor.
Since
\[
1-a\mu t
=
(1-t)+t(1-a\mu),
\]
the triangle inequality and the assumption
\(d\ge|1-a\mu|\) give
\begin{equation}\label{eq:center-linear-bound}
|1-a\mu t|
\le
1-t+dt,
\qquad 0\le t\le1.
\end{equation}

Taking absolute values in \eqref{eq:center-identity} and using
\(|a\mu J|\le\mathcal C\) and
\(|1-a\mu|\le d\), we obtain
\begin{equation}\label{eq:coefficient-center-bound}
1
\le
\mathcal C+d^n
+
na|\mu|
\int_0^1
|F(t)-(1-a\mu t)^{n-1}|\dd t.
\end{equation}

By \eqref{eq:product-expansion} and
\eqref{eq:center-linear-bound},
\[
\begin{aligned}
|F(t)-(1-a\mu t)^{n-1}|
&\le
\sum_{k=2}^{n-1}
a^kt^k|e_k(\bm w)|
(1-t+dt)^{n-1-k}.
\end{aligned}
\]
Using Lemma~\ref{lem:coefficients} and integrating term by term therefore
gives
\[
\begin{aligned}
&na|\mu|
\int_0^1
|F(t)-(1-a\mu t)^{n-1}|\dd t\\
&\qquad\le
na|\mu|
\sum_{k=2}^{n-1}
a^k
\binom{n-1}{k}
\left(\frac{k-1}{n-2}v\right)^{k/2}
I_{k,n-1-k}(d).
\end{aligned}
\]
Since \(k+(n-1-k)+1=n\), \eqref{eq:integral-polynomial} gives
\begin{equation}\label{eq:S-I-relation}
n\binom{n-1}{k}
I_{k,n-1-k}(d)
=
S_{k,n-1-k}(d).
\end{equation}
Consequently,
\[
\begin{aligned}
&na|\mu|
\int_0^1
|F(t)-(1-a\mu t)^{n-1}|\dd t\\
&\qquad\le
a|\mu|
\sum_{k=2}^{n-1}
\left(
\frac{a^2v(k-1)}{n-2}
\right)^{k/2}
S_{k,n-1-k}(d).
\end{aligned}
\]
Substituting this estimate into
\eqref{eq:coefficient-center-bound} proves
\eqref{eq:coefficient-test}.
\end{proof}
\subsection{Retaining the quadratic term}

The coefficient estimate in Lemma~\ref{lem:coefficient-test} treats all
centered coefficients by absolute value.  Near the limiting configuration,
however, the quadratic term carries a favorable sign.  We now retain this
information.

Assume that \(\mu\ne0\), and write
$
\mu=|\mu|e^{i\theta}.
$
Set
\[
q'_j=e^{-i\theta}q_j,
\qquad
w'_j=q'_j-|\mu|,
\qquad
\bm w'=(w'_1,\ldots,w'_{n-1}).
\]
Since
$
\frac1{n-1}\sum_{j=1}^{n-1}q_j=\mu,
$
we have
\[
\frac1{n-1}\sum_{j=1}^{n-1}q'_j=|\mu|,
\qquad
\sum_{j=1}^{n-1}w'_j=0.
\]
Moreover,
$
w'_j=e^{-i\theta}w_j,
$
so that
\[
\frac1{n-1}\sum_{j=1}^{n-1}|w'_j|^2=v.
\]

Define
\[
W
=
\frac1{n-1}
\sum_{j=1}^{n-1}
(\Rea q'_j-|\mu|)^2
=
\frac1{n-1}
\sum_{j=1}^{n-1}
(\Rea w'_j)^2.
\]
Since \(e_1(\bm w')=0\),
\[
e_2(\bm w')
=
-\frac12\sum_{j=1}^{n-1}(w'_j)^2.
\]
Writing \(w'_j=\Rea w'_j+i\Ima w'_j\), we therefore obtain
\[
\begin{aligned}
\Rea e_2(\bm w')
&=
-\frac12
\sum_{j=1}^{n-1}
\left((\Rea w'_j)^2-(\Ima w'_j)^2\right)\\
&=
\frac12\sum_{j=1}^{n-1}|w'_j|^2
-
\sum_{j=1}^{n-1}(\Rea w'_j)^2.
\end{aligned}
\]
Hence
\begin{equation}\label{eq:quadratic-real}
 \Rea e_2(\bm w')
 =
 \frac{n-1}{2}v-(n-1)W.
\end{equation}

To estimate \(W\), we shall use the following classical variance bound.

\begin{lem}[{\cite[p.~353]{BD00}}]\label{lem:bd}
Let \(\xi_1,\ldots,\xi_N\in[L,U]\), and let
\(\omega_1,\ldots,\omega_N\ge0\) satisfy
\(\sum_{j=1}^N\omega_j=1\).  If
$
b=\sum_{j=1}^N\omega_j\xi_j,
$
then
\[
\sum_{j=1}^N\omega_j(\xi_j-b)^2
\le
(U-b)(b-L).
\]
\end{lem}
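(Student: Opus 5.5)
The plan is to prove the Bhatia--Davis bound by a pointwise quadratic inequality followed by averaging, exploiting that the weighted mean is exactly $b$. For every $j$, since $L\le\xi_j\le U$, the two factors $U-\xi_j$ and $\xi_j-L$ are nonnegative, so
\[
(U-\xi_j)(\xi_j-L)\ge0 .
\]
Multiplying by $\omega_j\ge0$ and summing over $j$ preserves the inequality, and this single averaging step is where all hypotheses are used: $\omega_j\ge0$ and $\xi_j\in[L,U]$ for the sign, and $\sum_j\omega_j=1$ for normalization.

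Next I will expand both sides in terms of the first two weighted moments. The pointwise product expands as $(U-\xi_j)(\xi_j-L)=-\xi_j^2+(U+L)\xi_j-UL$. Averaging with $\sum_j\omega_j=1$ and $\sum_j\omega_j\xi_j=b$ gives
\[
0\le -\sum_{j=1}^N\omega_j\xi_j^2+(U+L)b-UL .
\]
The weighted variance identity $\sum_j\omega_j(\xi_j-b)^2=\sum_j\omega_j\xi_j^2-b^2$ also follows from the same two normalizations.

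Substituting the second-moment expression then yields
\[
\sum_j\omega_j(\xi_j-b)^2\le (U+L)b-UL-b^2=(U-b)(b-L),
\]
which is the claim. There is no real obstacle here. The only point needing care is to keep track that the mean $b$ in the variance is the same weighted mean appearing in the averaged linear term, so the cross terms cancel exactly. Degenerate cases such as $L=U$ or zero weights are covered automatically, since no division occurs.
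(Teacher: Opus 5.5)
Your proof is correct: averaging the pointwise inequality $(U-\xi_j)(\xi_j-L)\ge0$ against the weights, and using $\sum_j\omega_j=1$ and $\sum_j\omega_j\xi_j=b$, gives exactly $(U-b)(b-L)-\sum_j\omega_j(\xi_j-b)^2\ge0$. The paper gives no proof of its own and simply cites Bhatia--Davis, whose argument is essentially this one.
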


For the reciprocal critical-point coordinates, the relevant interval can
be bounded explicitly.

\begin{lem}\label{lem:real-variance}
Suppose that \(\mu\ne0\) and
$
|q_j|^2\ge t_0\ge0,
 1\le j\le n-1.
$
Set
\begin{equation}\label{eq:real-lower}
 L_\mu
 =
 \max\left\{
 -1,\,
 (n-1)|\mu|-(n-2),\,
 \frac{(n-1)^2|\mu|^2+t_0-(n-2)^2}
 {2(n-1)|\mu|}
 \right\}.
\end{equation}
Then
\begin{equation}\label{eq:real-variance}
 \Rea q'_j\ge L_\mu,
 \qquad
 W
 \le
 \min\left\{
 (n-2)(1-|\mu|)^2,\,
 (1-|\mu|)(|\mu|-L_\mu)
 \right\}.
\end{equation}
\end{lem}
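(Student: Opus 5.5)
We prove the pointwise lower bound $\Rea q'_j\ge L_\mu$ first, one term of the maximum at a time, and then feed it into Lemma~\ref{lem:bd}. Write $r_j=\Rea q'_j$. Since $|q'_j|=|q_j|<1$, each $r_j\in(-1,1)$, which gives the term $-1$.

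For the second term, fix $j$ and use $\sum_k q'_k=(n-1)|\mu|$. Taking real parts and bounding the other $n-2$ terms by $r_k<1$ gives
\[
r_j=(n-1)|\mu|-\sum_{k\ne j}r_k\ge(n-1)|\mu|-(n-2).
\]

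For the third term, write $q'_j=(n-1)|\mu|-\sum_{k\ne j}q'_k$. The triangle inequality gives $|(n-1)|\mu|-q'_j|\le n-2$. Squaring yields
\[
(n-1)^2|\mu|^2-2(n-1)|\mu|\,r_j+|q'_j|^2\le(n-2)^2 .
\]
Using $|q'_j|^2\ge t_0$ and dividing by $2(n-1)|\mu|>0$ gives $r_j\ge\bigl((n-1)^2|\mu|^2+t_0-(n-2)^2\bigr)/\bigl(2(n-1)|\mu|\bigr)$. Taking the maximum of the three bounds gives $r_j\ge L_\mu$.

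For the variance, apply Lemma~\ref{lem:bd} with $N=n-1$, uniform weights $\omega_j=1/(n-1)$, $\xi_j=r_j$, mean $b=|\mu|$, $U=1$ and $L=L_\mu$. This gives $W\le(1-|\mu|)(|\mu|-L_\mu)$. We need $L_\mu\le|\mu|$, which holds automatically because $L_\mu\le\min_j r_j\le$ the average $=|\mu|$.

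For the other bound, $W\le(n-2)(1-|\mu|)^2$, set $d_j=1-r_j>0$. Then $\sum_j d_j=(n-1)(1-|\mu|)$, and
\[
\sum_j (r_j-|\mu|)^2=\sum_j d_j^2-(n-1)(1-|\mu|)^2 .
\]
Since $d_j\ge0$, we have $\sum_j d_j^2\le(\sum_j d_j)^2=(n-1)^2(1-|\mu|)^2$. Dividing by $n-1$ gives $W\le(n-1)(1-|\mu|)^2-(1-|\mu|)^2=(n-2)(1-|\mu|)^2$.

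The only delicate point is the third term of $L_\mu$: the square must be formed from the modulus identity, not from real parts alone. The rest is bookkeeping.
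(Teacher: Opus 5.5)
Your proposal is correct and follows the paper's proof step for step. You obtain the three pointwise lower bounds from $|q'_j|<1$, from the real-part sum identity, and from the squared triangle inequality with $|q'_j|^2\ge t_0$. You then apply Lemma~\ref{lem:bd} with $U=1$ and $b=|\mu|$, and finally bound $\sum_j d_j^2\le(\sum_j d_j)^2$ for $d_j=1-\Rea q'_j$.
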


\begin{proof}
We first establish the lower bound for \(\Rea q'_j\).  Since
\(|q'_j|=|q_j|<1\),
$
\Rea q'_j\ge-1.
$
Also,
$
\sum_{j=1}^{n-1}q'_j=(n-1)|\mu|.
$
Since \(\Rea q'_k\le1\) for every \(k\),
\begin{equation}\label{eq:real-lower-second}
\Rea q'_j
=
(n-1)|\mu|
-
\sum_{k\ne j}\Rea q'_k\ge
(n-1)|\mu|-(n-2).
\end{equation}
Finally,
$
(n-1)|\mu|-q'_j
=
\sum_{k\ne j}q'_k,
$
and hence
\begin{equation*}
\left|(n-1)|\mu|-q'_j\right|
\le
\sum_{k\ne j}|q'_k|
\le
n-2.
\end{equation*}
Squaring gives
\[
(n-1)^2|\mu|^2
+|q'_j|^2
-2(n-1)|\mu|\Rea q'_j
\le
(n-2)^2.
\]
Since \(|q'_j|^2=|q_j|^2\ge t_0\), it follows that
\begin{equation}\label{eq:real-lower-third}
\Rea q'_j
\ge
\frac{(n-1)^2|\mu|^2+t_0-(n-2)^2}
{2(n-1)|\mu|}.
\end{equation}
Combining
$
\Rea q'_j\ge-1,
$
\eqref{eq:real-lower-second}, and
\eqref{eq:real-lower-third}
proves
\[
\Rea q'_j\ge L_\mu.
\]

We next bound \(W\).  Apply Lemma~\ref{lem:bd} with
\[
N=n-1,\qquad
Y_j=\Rea q'_j,\qquad
\omega_j=\frac1{n-1},
\]
and
\[
L=L_\mu,\qquad
U=1,\qquad
b=|\mu|.
\]
Since
$
\frac1{n-1}\sum_{j=1}^{n-1}\Rea q'_j=|\mu|,
$
we obtain
\begin{equation}\label{eq:W-bhatia-davis}
W
\le
(1-|\mu|)(|\mu|-L_\mu).
\end{equation}

For the second estimate, put
$
d_j=1-\Rea q'_j\ge0.
$
Then
\[
\sum_{j=1}^{n-1}d_j
=
(n-1)(1-|\mu|),
\]
and
\[
\begin{aligned}
W
&=
\frac1{n-1}\sum_{j=1}^{n-1}
\bigl(d_j-(1-|\mu|)\bigr)^2\\
&=
\frac1{n-1}\sum_{j=1}^{n-1}d_j^2
-(1-|\mu|)^2.
\end{aligned}
\]
Since \(d_j\ge0\),
\[
\sum_{j=1}^{n-1}d_j^2
\le
\left(\sum_{j=1}^{n-1}d_j\right)^2,
\]
and therefore
\begin{equation}\label{eq:W-simple}
W
\le
(n-2)(1-|\mu|)^2.
\end{equation}
Combining \eqref{eq:W-bhatia-davis} and \eqref{eq:W-simple} proves
\eqref{eq:real-variance}.
\end{proof}

Define
\begin{equation}\label{eq:K2}
 K_2(z)
 =
 nz\int_0^1t^2(1-zt)^{n-3}\dd t.
\end{equation}
The quadratic term in \eqref{eq:product-expansion} contributes
$
-a^2e_2(\bm w)K_2(a\mu)
$
to the last term of \eqref{eq:center-identity}.  Instead of estimating
this contribution by its absolute value, we shall retain its real part.

\begin{lem}\label{lem:phase}
Suppose
$
0<h_0:=a|\mu|\le1,
\varepsilon\ge|e^{i\theta}-1|,
$ and $
d\ge|1-a\mu|.
$
Then
\begin{equation}\label{eq:phase}
 |e^{2i\theta}K_2(a\mu)-K_2(h_0)|
 \le
 E_2,
\end{equation}
where
\begin{equation*}
 E_2
 :=
 nh_0\varepsilon
 \left\{
 3I_{2,n-3}(d)
 +(n-3)h_0I_{3,n-4}(d)
 \right\}.
\end{equation*}
If, in addition,
$
0<L_2\le\Rea e_2(\bm w'),
0\le K_-\le K_2(h_0),
$ and $
\mathcal C\ge|a\mu J|,
$
then
\begin{equation}\label{eq:signed-test}
 \begin{split}
 1\le{}&
 \mathcal C+d^n-a^2K_-L_2
 +\frac{(n-1)a^2v}{2}E_2\\
 &\quad+
 a|\mu|\sum_{k=3}^{n-1}
 \left(\frac{a^2v(k-1)}{n-2}\right)^{k/2}
 S_{k,n-1-k}(d).
 \end{split}
\end{equation}
\end{lem}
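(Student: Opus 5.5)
The argument splits into two parts. Part one is the phase estimate \eqref{eq:phase}. Part two inserts the quadratic term with its sign into the centered identity \eqref{eq:center-identity}.

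\textbf{Phase estimate.} Write $a\mu=h_0e^{i\theta}$. Then
\[
e^{2i\theta}K_2(a\mu)=nh_0e^{3i\theta}\int_0^1t^2(1-h_0e^{i\theta}t)^{n-3}\dd t .
\]
Define
\[
\Psi(\phi)=nh_0e^{3i\phi}\int_0^1t^2(1-h_0e^{i\phi}t)^{n-3}\dd t .
\]
Then $\Psi(0)=K_2(h_0)$ and $\Psi(\theta)=e^{2i\theta}K_2(a\mu)$.

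We do not want to integrate in $\phi$ along the arc, since the linear factor would then need a bound along the whole arc. Instead, compare directly:
\[
e^{3i\theta}(1-h_0e^{i\theta}t)^{n-3}-(1-h_0t)^{n-3}
=(e^{3i\theta}-1)(1-h_0e^{i\theta}t)^{n-3}
+\bigl[(1-h_0e^{i\theta}t)^{n-3}-(1-h_0t)^{n-3}\bigr].
\]
For the first piece, note that $e^{3i\theta}-1=(e^{i\theta}-1)(e^{2i\theta}+e^{i\theta}+1)$, so $|e^{3i\theta}-1|\le3\varepsilon$.

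For the second piece, factor the difference of $(n-3)$-th powers. The difference of the two bases is $h_0t(1-e^{i\theta})$, which has modulus at most $h_0t\varepsilon$. This gives at most $n-3$ terms, each a product of $n-4$ factors drawn from the two bases.

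Both bases are bounded by $1-t+dt$ on $[0,1]$:
\begin{itemize}
\item $|1-a\mu t|\le1-t+dt$ by \eqref{eq:center-linear-bound};
\item $1-h_0t\le1-t+dt$, since $h_0\le1$ makes it nonnegative and $d\ge|1-a\mu|\ge1-h_0$.
\end{itemize}

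The pointwise bound is therefore
\[
3\varepsilon\, t^2(1-t+dt)^{n-3}+(n-3)h_0\varepsilon\, t^3(1-t+dt)^{n-4}.
\]
Multiplying by $nh_0$ and integrating using the formula for $I_{k,p}$ in \eqref{eq:integral-polynomial} gives exactly $E_2$.

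\textbf{Signed test.} Start from \eqref{eq:center-identity} and insert the expansion \eqref{eq:product-expansion}. The $k=2$ term contributes
\[
-na\mu\int_0^1a^2t^2e_2(\bm w)(1-a\mu t)^{n-3}\dd t=-a^2e_2(\bm w)K_2(a\mu).
\]
Since $w_j=e^{i\theta}w'_j$, we have $e_2(\bm w)=e^{2i\theta}e_2(\bm w')$. The identity then reads
\[
1=(-1)^{n-1}a\mu J+(1-a\mu)^n-a^2e_2(\bm w')e^{2i\theta}K_2(a\mu)+\mathcal T_{\ge3},
\]
where $\mathcal T_{\ge3}$ collects the terms with $k\ge3$.

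Take real parts, bounding every term except the quadratic one by its modulus. Then split
\[
e^{2i\theta}K_2(a\mu)=K_2(h_0)+\bigl(e^{2i\theta}K_2(a\mu)-K_2(h_0)\bigr).
\]
\begin{itemize}
\item The main part: $K_2(h_0)$ is real and nonnegative, because $h_0\le1$. Hence $-\Rea\bigl(a^2e_2(\bm w')K_2(h_0)\bigr)\le-a^2K_-L_2$.
\item The error part is at most $a^2|e_2(\bm w')|E_2$. Since $e_1(\bm w')=0$, we have $|e_2(\bm w')|\le\frac12\sum_j|w'_j|^2=\frac{(n-1)v}{2}$. (Lemma~\ref{lem:coefficients} would give a weaker bound here.)
\end{itemize}

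The remaining pieces are handled as in Lemma~\ref{lem:coefficient-test}:
\begin{itemize}
\item $|a\mu J|\le\mathcal C$;
\item $|1-a\mu|^n\le d^n$;
\item $\mathcal T_{\ge3}$ is bounded by Lemma~\ref{lem:coefficients} and \eqref{eq:S-I-relation}, giving the sum over $k\ge3$.
\end{itemize}
This yields \eqref{eq:signed-test}.

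The main obstacle is keeping the phase estimate at exactly the constants in $E_2$. That requires the direct comparison above rather than a derivative in $\theta$, and it requires checking that both bases are dominated by $1-t+dt$.
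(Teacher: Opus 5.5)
Your proof is correct and follows the paper's argument essentially step for step. It uses the same splitting $(e^{3i\theta}-1)A^{n-3}+(A^{n-3}-B^{n-3})$ with telescoping, dominates both bases by $1-t+dt$, and takes real parts of \eqref{eq:center-identity} around the quadratic term exactly as the paper does; the only variation is that you get $1-h_0t\le 1-t+dt$ from $d\ge 1-h_0$, whereas the paper uses $0\le 1-h_0t\le|1-a\mu t|$.

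One small correction: your parenthetical is wrong, since Lemma~\ref{lem:coefficients} with $k=2$ gives exactly $\binom{n-1}{2}\frac{v}{n-2}=\frac{(n-1)v}{2}$. That is the bound the paper uses, so your direct estimate is equivalent to it, not sharper.
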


\begin{proof}
Since \(a\mu=h_0e^{i\theta}\), the definition \eqref{eq:K2} gives
\[
e^{2i\theta}K_2(a\mu)
=
nh_0e^{3i\theta}
\int_0^1
t^2(1-h_0e^{i\theta}t)^{n-3}\dd t.
\]
Moreover,
\[
K_2(h_0)
=
nh_0
\int_0^1
t^2(1-h_0t)^{n-3}\dd t.
\]
Since \(0<h_0\le1\),
\[
0\le1-h_0t
\le
|1-h_0e^{i\theta}t|
=
|1-a\mu t|.
\]
Also,
$
1-a\mu t=(1-t)+t(1-a\mu),
$
so the assumption \(d\ge|1-a\mu|\) gives
\begin{equation}\label{eq:phase-linear-bound}
|1-a\mu t|
\le
1-t+dt.
\end{equation}

Set
$
A=1-h_0e^{i\theta}t,
$ and $
B=1-h_0t.
$
Then
\begin{equation}\label{eq:phase-decomposition}
|e^{3i\theta}A^{n-3}-B^{n-3}|
\le
|e^{3i\theta}-1|\,|A|^{n-3}
+
|A^{n-3}-B^{n-3}|.
\end{equation}
For the first term on the right-hand side of
\eqref{eq:phase-decomposition}, we have
\[
|e^{3i\theta}-1|
\le
3|e^{i\theta}-1|
\le
3\varepsilon.
\]
Moreover, by \eqref{eq:phase-linear-bound},
\[
|A|=|1-a\mu t|
\le
1-t+dt.
\]
Hence
\begin{equation}\label{eq:phase-first-term}
|e^{3i\theta}-1|\,|A|^{n-3}
\le
3\varepsilon(1-t+dt)^{n-3}.
\end{equation}

For the second term on the right-hand side of
\eqref{eq:phase-decomposition}, telescoping the difference of powers gives
\begin{equation}\label{eq:phase-second-term}
\begin{aligned}
|A^{n-3}-B^{n-3}|
&\le
(n-3)|A-B|
\max\{|A|,|B|\}^{n-4}\\
&\le
(n-3)h_0t\varepsilon
(1-t+dt)^{n-4}.
\end{aligned}
\end{equation}
Substituting \eqref{eq:phase-first-term} and
\eqref{eq:phase-second-term} into \eqref{eq:phase-decomposition} yields
\[
\begin{aligned}
|e^{3i\theta}A^{n-3}-B^{n-3}|
&\le
3\varepsilon(1-t+dt)^{n-3}\\
&\quad+
(n-3)h_0t\varepsilon(1-t+dt)^{n-4}.
\end{aligned}
\]
Therefore
\[
\begin{aligned}
&|e^{2i\theta}K_2(a\mu)-K_2(h_0)|\\
&\qquad\le
nh_0\varepsilon
\left\{
3\int_0^1t^2(1-t+dt)^{n-3}\dd t
+
(n-3)h_0
\int_0^1t^3(1-t+dt)^{n-4}\dd t
\right\}.
\end{aligned}
\]
By \eqref{eq:integral-polynomial}, the right-hand side is precisely
\(E_2\), proving \eqref{eq:phase}.

We now retain the sign of the quadratic contribution.  Since
$
w_j=e^{i\theta}w'_j,
$
we have
\[
e_2(\bm w)
=
e^{2i\theta}e_2(\bm w').
\]
Consequently,
\[
\begin{aligned}
\Rea\!\left(e_2(\bm w)K_2(a\mu)\right)
&=
\Rea\!\left(
e_2(\bm w')
e^{2i\theta}K_2(a\mu)
\right)\\
&\ge
K_2(h_0)\Rea e_2(\bm w')
-
|e_2(\bm w')|E_2.
\end{aligned}
\]
Using
$
K_2(h_0)\ge K_-,
\Rea e_2(\bm w')\ge L_2,
$
and Lemma~\ref{lem:coefficients} with \(k=2\),
\[
|e_2(\bm w')|
\le
\frac{n-1}{2}v,
\]
we obtain
\begin{equation}\label{eq:quadratic-signed-lower}
\Rea\!\left(e_2(\bm w)K_2(a\mu)\right)
\ge
K_-L_2-\frac{n-1}{2}vE_2.
\end{equation}

Finally, take real parts in \eqref{eq:center-identity}.  Separating the
quadratic term in \eqref{eq:product-expansion} gives
\[
\begin{aligned}
1
\le{}&
|a\mu J|
+|1-a\mu|^n
-a^2\Rea\!\left(e_2(\bm w)K_2(a\mu)\right)\\
&\quad+
na|\mu|
\sum_{k=3}^{n-1}
a^k|e_k(\bm w)|
I_{k,n-1-k}(d).
\end{aligned}
\]
Using
$
|a\mu J|\le\mathcal C,
|1-a\mu|\le d,
$
the lower bound \eqref{eq:quadratic-signed-lower}, and
Lemma~\ref{lem:coefficients}, we obtain
\[
\begin{aligned}
1\le{}&
\mathcal C+d^n
-a^2K_-L_2
+\frac{(n-1)a^2v}{2}E_2\\
&\quad+
na|\mu|
\sum_{k=3}^{n-1}
a^k
\binom{n-1}{k}
\left(\frac{k-1}{n-2}v\right)^{k/2}
I_{k,n-1-k}(d).
\end{aligned}
\]
Finally, \eqref{eq:S-I-relation} transforms the last sum into
\[
a|\mu|
\sum_{k=3}^{n-1}
\left(
\frac{a^2v(k-1)}{n-2}
\right)^{k/2}
S_{k,n-1-k}(d),
\]
which proves \eqref{eq:signed-test}.
\end{proof}
\subsection{An analytic neighborhood of \texorpdfstring{$|\mu|=1$}{|mu|=1}}

The finite verification must not leave an unchecked neighborhood of the
limiting equality configuration \(|\mu|=1\).  The next lemma excludes
such a neighborhood uniformly for the small degrees, without any
computer-assisted verification.

\begin{lem}\label{lem:near}
There is no normalized counterexample with
$
4\le n\le13$ and $
|\mu|\ge\frac{99999}{100000}.
$
\end{lem}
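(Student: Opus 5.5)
The plan is to exploit the fact that $|\mu|\ge 1-10^{-5}$, together with $|q_j|<1$, forces the configuration to be a small perturbation of the extremal one $P(z)=z^n-1$, $a=1$. At that configuration all $q_j=1$ and all $\zeta_j=0$, so equality holds. Near it we compare the center identity \eqref{eq:center-identity} to second order. Write $\varepsilon:=1-|\mu|\le 10^{-5}$. Every constant below has to be explicit and uniform in $4\le n\le 13$, which is what makes a purely analytic treatment of this neighbourhood feasible.

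\step{Step 1. Localization.}
First I would record the a priori smallness of all parameters in terms of $\varepsilon$.
\begin{itemize}
\item Since $|\mu|^2<s<1$, we get $v=s-|\mu|^2\le 1-|\mu|^2\le 2\varepsilon$ and $1-s\le 2\varepsilon$. Because $(n-1)\cdot 2\varepsilon<1$, Lemma~\ref{lem:radial-bounds} applies.
\item Lemma~\ref{lem:radial-bounds} gives $|q_j|^2\ge 1-2(n-1)\varepsilon$ and $|a-\mu|\le (1+O(\varepsilon))\,2\varepsilon$.
\item Since $a>0$ is real, this forces $|\Ima\mu|=O(\varepsilon)$ and $|a-1|=O(\varepsilon)$. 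Hence the phase satisfies $|e^{i\theta}-1|=O(\varepsilon)$, and $d:=|1-a\mu|=O(\varepsilon)$ (alternatively use $d^2\le Vv$ from \eqref{eq:moment-b}).
\item The second bound in \eqref{eq:finite-radial} gives a sharpened $V\le 1-a^2+O(\varepsilon)=O(\varepsilon)$.
\end{itemize}
All these $O(\cdot)$'s would be made into explicit rational constants, say with a factor $100n$.

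\step{Step 2. The main terms.}
Next I would take real parts in \eqref{eq:center-identity}, exactly as in the proof of Lemma~\ref{lem:phase}, which leads to \eqref{eq:signed-test}.
\begin{itemize}
\item For $\mathcal C$ I would use \eqref{eq:core} with $\Phi_{n-1}(a)\le1$, together with $\prod|q_j|^2\le s^{n-1}$ and $s=|\mu|^2+v$. This gives $\mathcal C\le |\mu|(|\mu|^2+v)^{(n-1)/2}$, so $1-\mathcal C\ge n\varepsilon-\tfrac{n-1}{2}v-O(\varepsilon^2)$.
\item The term $d^n$ is $O(\varepsilon^{2})$ since $n\ge4$.
\item The cubic and higher coefficient terms are $O(v^{3/2})=O(\varepsilon^{3/2})$, by Lemma~\ref{lem:coefficients} with $S_{k,n-1-k}(d)\le S_{k,n-1-k}(1)$.
\item $E_2=O(\varepsilon)$ by Lemma~\ref{lem:phase}, so $\tfrac{(n-1)a^2v}{2}E_2=O(\varepsilon v)$.
\item The decisive favourable term is $-a^2K_-L_2$. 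By \eqref{eq:quadratic-real} and Lemma~\ref{lem:real-variance}, $\Rea e_2(\bm w')\ge \tfrac{n-1}{2}v-(n-1)(n-2)\varepsilon^2$. Moreover $K_2(h_0)$ with $h_0=a|\mu|=1+O(\varepsilon)$ is close to $K_2(1)=n\int_0^1t^2(1-t)^{n-3}\dd t$, which is an explicit rational number.
\end{itemize}

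\step{Step 3. Comparison, split into two cases.}
The conclusion of \eqref{eq:signed-test} becomes roughly
\[
n\varepsilon \le \tfrac{n-1}{2}\bigl(1-a^2K_2(1)\bigr)v+(n-1)(n-2)a^2K_2(1)\varepsilon^2+O(\varepsilon^{3/2}).
\]
Since $v\le2\varepsilon$, the coefficient of $v$ yields at most $(n-1)(1-K_2(1))\varepsilon$. The task is to check, uniformly for $4\le n\le13$, that $(n-1)(1-K_2(1))<n$ with enough margin to absorb the $O(\varepsilon^{3/2})$ and $O(\varepsilon^2)$ errors at $\varepsilon\le10^{-5}$.

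If this margin is insufficient, I would sharpen the $\mathcal C$ bound in the regime where $v$ is not small compared with $\varepsilon$. There I would use Lemma~\ref{lem:radial-loss}, with $|\mu-as|\ge |\mu|(1-s)-|a-1|s$, or Lemma~\ref{lem:root-product}, to gain a factor below $s^{(n-1)/2}$. In the complementary regime $v\le c\varepsilon$ the crude bound on $\mathcal C$ already gives $1-\mathcal C\ge (n-c(n-1)/2)\varepsilon$.

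\step{Main obstacle.}
I expect the main obstacle to be exactly this first-order bookkeeping in Step~3. Every $O(\varepsilon)$ quantity from Step~1, namely $|a-1|$, $d$, $|e^{i\theta}-1|$ and $h_0-1$, feeds into the linear-in-$\varepsilon$ balance through $K_-$, $E_2$ and $\mathcal C$. One therefore needs explicit constants sharp enough that the leading coefficients compare strictly, with the neglected terms genuinely of higher order. I would first try the crude constants above for each $n$ separately, as a finite list of rational inequalities for $n=4,\dots,13$. Only where that fails would I introduce the refined product bounds.
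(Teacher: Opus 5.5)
Your plan is correct and follows the same skeleton as the paper's proof. Both localize through Lemma~\ref{lem:radial-bounds}; the paper gets $1-4\delta\le a\le1+2\delta$, $|e^{i\theta}-1|<5\delta$ and $|1-a\mu|<12\delta$. Both take real parts in \eqref{eq:center-identity}, treat the favourable quadratic term via \eqref{eq:quadratic-real} with $W\le(n-2)\delta^2$, and bound the orders $k\ge3$ by Lemma~\ref{lem:coefficients} as $O(\delta^{3/2})$.

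The one substantive difference is your treatment of $|a\mu J|$, and here the paper is simpler.
\begin{itemize}
\item The paper uses only $|a\mu J|\le|\mu|=1-\delta$.
\item It then shows $\Rea\bigl(e_2(\bm w)K_2(a\mu)\bigr)\ge\frac{v}{n-2}-2\delta^2-225(n-1)\delta v\ge-2\delta^2$. The quadratic term is discarded rather than balanced, so no multiple of $v$ survives at first order.
\item The contradiction is then just $1\le1-\delta+4\delta^2+128\delta^{3/2}+(12\delta)^4$.
\end{itemize}
Your sharper bound $|\mu|s^{(n-1)/2}$ creates a loss $\frac{n-1}{2}v$ that you then offset against the quadratic gain. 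The balance you flag as the main obstacle is in fact automatic: $n-(n-1)\bigl(1-K_2(1)\bigr)=1+\frac{2}{n-2}$, so the margin is at least $\varepsilon$. The fallback case split in Step~3 is therefore never needed. This had to be so, since $|\mu|s^{(n-1)/2}\le|\mu|$.

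Two technical cautions.
\begin{itemize}
\item Lemma~\ref{lem:phase} and \eqref{eq:signed-test} assume $h_0=a|\mu|\le1$ and $L_2>0$, and neither is guaranteed here. Since $a$ can be as large as $1+2\delta$, $h_0$ can exceed $1$; and $v$ can be smaller than $2W$. This is repairable. The proof of \eqref{eq:phase} only uses $|1-h_0t|\le|1-h_0e^{i\theta}t|$, which holds for every $h_0\ge0$. The sign condition on $L_2$ can be dropped by keeping $-(n-1)W$ with an upper bound on $K_2$. The paper avoids both issues by comparing $e^{2i\theta}K_2(a\mu)$ directly with $K_2(1)=\frac{2}{(n-1)(n-2)}$, telescoping with $|1-a\mu t|\le1$.
\item Bounding $S_{k,n-1-k}(d)$ by $S_{k,n-1-k}(1)=\binom{n}{k+1}$ is wasteful. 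At $n=13$, $k=3$ it puts a constant of about $157$ in front of $\delta^{3/2}$. This still works, since $157\sqrt\delta<\frac12$ for $\delta\le10^{-5}$. The paper instead takes $d=12\delta$ and gets $S_{k,n-1-k}(12\delta)\le(1-12\delta)^{-k-1}<2$.
\end{itemize}
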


\begin{proof}
Put
$
\delta=1-|\mu|.
$
Then
$
0<\delta\le10^{-5}.
$
Since \(s\ge|\mu|^2\),
\[
1-s
\le
1-|\mu|^2
=
2\delta-\delta^2
\le
2\delta.
\]

\step{Step 1. Control of \(a\) and the phase.}
Since
$
\sum_{j=1}^{n-1}(1-|q_j|^2)
=
(n-1)(1-s),
$
each summand satisfies
$
1-|q_j|^2
\le
(n-1)(1-s).
$
As \(n\le13\), we obtain
\begin{equation*}
|q_j|^2
\ge
1-(n-1)(1-s)
\ge
1-24\delta
>
\left(\frac{99}{100}\right)^2.
\end{equation*}

Moreover,
\[
(n-1)(1-s)\le24\delta<1,
\]
so the first estimate in Lemma~\ref{lem:radial-bounds} applies.  Using
\(1-s\le2\delta\), we obtain
\begin{equation}\label{eq:near-a-mu}
|a-\mu|
\le
\frac{1-s}{\sqrt{1-(n-1)(1-s)}}
\le
\frac{2\delta}{\sqrt{1-24\delta}}
<
\frac{21}{10}\delta.
\end{equation}
Since
$
\bigl|a-|\mu|\bigr|
\le
|a-\mu|,
$
and \(|\mu|=1-\delta\), \eqref{eq:near-a-mu} gives
\[
1-4\delta
\le
a
\le
1+2\delta.
\]
In particular,
$
a^2<2.
$

We next control the phase of \(\mu\).  Since
$
\bigl|\mu-|\mu|\bigr|
\le
|\mu-a|+\bigl|a-|\mu|\bigr|,
$
equation \eqref{eq:near-a-mu} yields
\[
\bigl|\mu-|\mu|\bigr|
<
\frac{21}{5}\delta.
\]
Writing \(\mu=|\mu|e^{i\theta}\), we therefore have
\[
|e^{i\theta}-1|
=
\frac{|\mu-|\mu||}{|\mu|}
<
\frac{(21/5)\delta}{1-\delta}
<
5\delta.
\]

Finally,
\[
|a|\mu|-1|
\le
|\mu|\bigl|a-|\mu|\bigr|
+
(1-|\mu|^2)
<
\frac{21}{10}\delta+2\delta,
\]
and hence
\[
|a\mu-1|
\le
a\bigl|\mu-|\mu|\bigr|
+
|a|\mu|-1|<
12\delta.
\]
We have thus proved
\begin{equation}\label{eq:near-controls}
 1-4\delta\le a\le1+2\delta,\qquad
 a^2<2,\qquad
 |e^{i\theta}-1|<5\delta,\qquad
 |a\mu-1|<12\delta.
\end{equation}

For later use, observe also that
$
1-a\mu t
=
(1-t)+t(1-a\mu).
$
Hence, by \eqref{eq:near-controls},
\begin{equation}\label{eq:near-linear-factor}
|1-a\mu t|
\le
1-t+12\delta t
\le1,
\qquad 0\le t\le1.
\end{equation}

\step{Step 2. The quadratic contribution.}
From \eqref{eq:K2},
\[
K_2(1)
=
n\int_0^1t^2(1-t)^{n-3}\dd t
=
\frac{2}{(n-1)(n-2)}.
\]
We first compare \(K_2(a\mu)\) with \(K_2(1)\).  By
\eqref{eq:near-linear-factor} and telescoping the difference of powers,
\[
\begin{aligned}
|K_2(a\mu)-K_2(1)|
&\le
n|a\mu-1|
\left(
\int_0^1t^2\dd t
+
(n-3)\int_0^1t^3\dd t
\right)\\
&=
n|a\mu-1|
\left(
\frac13+\frac{n-3}{4}
\right).
\end{aligned}
\]
Using \(n\le13\) and \eqref{eq:near-controls}, we obtain
\begin{equation}\label{eq:near-K2}
|K_2(a\mu)-K_2(1)|
\le
442\delta.
\end{equation}

Moreover,
\[
|e^{2i\theta}-1|
\le
2|e^{i\theta}-1|
<
10\delta,
\]
while
\[
K_2(1)
=
\frac{2}{(n-1)(n-2)}
\le
\frac13.
\]
Therefore \eqref{eq:near-K2} gives
\begin{equation}\label{eq:near-K2-phase}
|e^{2i\theta}K_2(a\mu)-K_2(1)|
<
450\delta.
\end{equation}

Recall that
$
e_2(\bm w)
=
e^{2i\theta}e_2(\bm w').
$
Hence
\[
\begin{aligned}
\Rea\!\left(e_2(\bm w)K_2(a\mu)\right)
&\ge
K_2(1)\Rea e_2(\bm w')
-
450\delta\,|e_2(\bm w')|.
\end{aligned}
\]
By \eqref{eq:quadratic-real},
\[
\Rea e_2(\bm w')
=
\frac{n-1}{2}v-(n-1)W,
\]
and the first bound in \eqref{eq:real-variance} gives
\[
W\le(n-2)\delta^2.
\]
Consequently,
\[
\begin{aligned}
K_2(1)\Rea e_2(\bm w')
&=
\frac{v}{n-2}
-\frac{2W}{n-2}\\
&\ge
\frac{v}{n-2}-2\delta^2.
\end{aligned}
\]
Also, Lemma~\ref{lem:coefficients} with \(k=2\) gives
\[
|e_2(\bm w')|
\le
\frac{n-1}{2}v.
\]
Combining these estimates with \eqref{eq:near-K2-phase}, we obtain
\begin{equation}\label{eq:near-quadratic}
 \Rea\!\left(e_2(\bm w)K_2(a\mu)\right)
 \ge
 \frac{v}{n-2}
 -2\delta^2
 -225(n-1)\delta v
 \ge
 -2\delta^2.
\end{equation}
For the last inequality, it is enough to note that, for
\(4\le n\le13\) and \(\delta\le10^{-5}\),
\[
\frac1{n-2}
\ge
\frac1{11}
>
225\cdot12\cdot10^{-5}
\ge
225(n-1)\delta.
\]

\step{Step 3. The higher-order coefficients.}
Set
$
\tau=a\sqrt v.
$
Since
$
v\le1-|\mu|^2\le2\delta
$
and \(a^2<2\), we have
\begin{equation}\label{eq:near-tau}
\tau
\le
2\sqrt{2\delta}
<
\frac1{100}.
\end{equation}

For the terms of order \(k\ge3\), we use the coefficient bounds from
the proof of Lemma~\ref{lem:coefficient-test}, with
\(d=12\delta\).
For \(3\le k\le n-1\), the binomial series gives
\[
\begin{aligned}
S_{k,n-1-k}(12\delta)
&\le
\sum_{j=0}^{\infty}
\binom{k+j}{k}(12\delta)^j\\
&=
(1-12\delta)^{-k-1}\\
&<2,
\end{aligned}
\]
because \(k\le n-1\le12\) and \(\delta\le10^{-5}\).

Furthermore, \eqref{eq:near-controls} gives
$
|a\mu|<2,
$
and for \(3\le k\le n-1\),
\[
\left(\frac{k-1}{n-2}\right)^{k/2}\le1.
\]
Hence the total contribution of the terms with \(k\ge3\) is bounded by
\[
\begin{aligned}
a|\mu|
\sum_{k=3}^{n-1}
\left(
\frac{a^2v(k-1)}{n-2}
\right)^{k/2}
S_{k,n-1-k}(12\delta)&\le
4\sum_{k=3}^{n-1}\tau^k<
4\sum_{k=3}^{\infty}\tau^k\\
&<
5\tau^3<
128\delta^{3/2},
\end{aligned}
\]
where the last two inequalities follow from \eqref{eq:near-tau}.  Thus
\begin{equation}\label{eq:near-higher}
a|\mu|
\sum_{k=3}^{n-1}
\left(
\frac{a^2v(k-1)}{n-2}
\right)^{k/2}
S_{k,n-1-k}(12\delta)
<
128\delta^{3/2}.
\end{equation}

We now return to the centered identity.  By \eqref{eq:core},
$
|a\mu J|
\le
|\mu|
=
1-\delta.
$
Also, by \eqref{eq:near-controls},
\[
|1-a\mu|^n
\le
(12\delta)^n
\le
(12\delta)^4.
\]
The quadratic contribution satisfies, by \eqref{eq:near-quadratic} and
\(a^2<2\),
\[
-a^2
\Rea\!\left(e_2(\bm w)K_2(a\mu)\right)
\le
4\delta^2.
\]
Combining these estimates with \eqref{eq:near-higher} in
\eqref{eq:center-identity}, we obtain
\begin{equation}\label{eq:near-final}
1
\le
1-\delta
+4\delta^2
+128\delta^{3/2}
+(12\delta)^4.
\end{equation}
But \eqref{eq:near-final} would require
\[
1
\le
4\delta
+128\sqrt\delta
+20736\delta^3.
\]
This is impossible, since
\[
4\delta
+128\sqrt\delta
+20736\delta^3
<
\frac4{10^5}
+\frac{128}{300}
+\frac{20736}{10^{15}}
<
\frac12.
\]
The contradiction proves the lemma.
\end{proof}

\section{Finite verification and completion of the proof}\label{sec:finite}

Before carrying out the finite verification, we reduce the location of the
distinguished zero to a fixed bounded interval.  This reduction is
independent of the large-degree cutoff in Proposition~\ref{prop:large}.

\begin{lem}\label{lem:compact}
Every normalized counterexample satisfies
$0<a<5.$
\end{lem}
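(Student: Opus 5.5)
The plan has two regimes in the degree. The bound \(a^2\le n-1\) from Section~\ref{sec:notation} handles small degrees outright, and a version of Step~4 of Proposition~\ref{prop:large} handles the rest. Positivity \(a>0\) is part of the definition of a normalized counterexample, so only \(a<5\) needs proof.

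\textbf{Small degrees.} Cauchy--Schwarz applied to \(\sum_j z_j=-a\) and \(\sum_j|z_j|^2=n-a^2\) gives \(a^2\le (n-1)(n-a^2)/(n-1)^{\,}\cdot\frac{1}{1}\), i.e.\ \(a^2\le n-1\). Hence \(a\le 5\) whenever \(n\le 26\), and \(a<5\) for \(n\le 25\). For \(n=26\) the value \(a=5\) forces equality in Cauchy--Schwarz, so all \(z_j=-a/(n-1)\). In that case \(P(z)=(z-a)(z+a/(n-1))^{n-1}\), and \(-a/(n-1)\) is itself a critical point. Its distance to \(a\) is \(an/(n-1)\), so I would compute the remaining simple critical point explicitly and check that it lies within distance \(1\) of \(a\). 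That contradicts \(|a-\zeta_j|>1\) and excludes \(a=5\). Proposition~\ref{prop:large} removes \(n\ge100001\). It therefore remains to treat \(27\le n\le 100000\) with \(a\ge5\).

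\textbf{Moderate degrees, \(a\ge5\).} I would rerun Step~4 of the proof of Proposition~\ref{prop:large} with \(V=1\), now using the hypothesis \(a\ge5\) in place of \(n\ge100001\).
\begin{enumerate}
\item From \eqref{eq:moment-a}, \(a^2s\le 2a-1\), so \(s\le 9/25\).
\item Minkowski's inequality gives \(a\sqrt s\le 1+\sqrt{\beta_s(1)}\). I would sharpen the crude bound \(\beta_s(1)\le1\) via \(\beta_s(1)=1-\rho\le 1-|\mu|^2\) and the lower bound \(|\mu|\ge x\ge a(1-\sqrt{1-s})\) from \eqref{eq:xlower}, which is large when \(a\ge5\). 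The aim is a bound on \(a^2s\) noticeably below \(4\).
\item By \eqref{eq:endpoint-first-moment} we may take \(\kappa=\sqrt s\le 3/5\) and \(\ell(t)=1-\tfrac25 t\).
\end{enumerate}
Feed these into \eqref{eq:large-direct}, i.e.\ \(1\le |F(1)|+\psi(a)+|\mathcal R|\), term by term:
\begin{itemize}
\item \(\psi(a)\le\psi(5)=5e^{-12}\), which is negligible;
\item \(|F(1)|\le(3/5)^{n-1}\le (3/5)^{26}\);
\item \(|\mathcal R|\) is bounded by the linear branch of Lemma~\ref{lem:direct}, giving \(|\mathcal R|\le (a^2s)\,\widetilde c_n(n-1)\int_0^1 t\,\ell(t)^{n-2}\dd t\).
\end{itemize}
The last integral equals roughly \(\tfrac{25}{4n^2}\), so \(|\mathcal R|\lesssim 6.25\,\widetilde c_n\,(a^2s)/n\). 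Since \(\widetilde c_n<e\), this is below \(1/2\) for \(n\ge 27\) as soon as \(a^2s\) is bounded by a modest constant.

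\textbf{Main obstacle.} The danger is the constant in the \(|\mathcal R|\) estimate near \(n=27\). With the crude bound \(a^2s\le 4\), the estimate is borderline (about \(4e\cdot 25/(4\cdot 27)\approx 2.5\) before using the exact integral). I would first compute \(\int_0^1 t(1-\tfrac25t)^{n-2}\dd t\) exactly instead of extending to \([0,\infty)\), and use the sharpened bound on \(a^2s\) from the list above. If that still fails at the lowest degrees, I would raise the small-degree threshold by checking a few more \(n\) individually. One option is the retained bound \eqref{eq:retained}, which keeps the factor \(s\le 9/25\) rather than \(\min\{1,\widetilde c_ns\}\). Another is the root-product bound \eqref{eq:product-basic} combined with \eqref{eq:product-radial}, which forces \(a-1\le|a-\mu|\) to be at most \(B_0^{(n-1)/2}/(n(n-1))\); together with \(a^2\le n-1\) this rules out \(a\ge5\) directly for small \(n\). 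The case analysis only has to close with a uniform margin at the endpoint \(a=5\), because each bound above is monotone in \(a\) on \([5,\sqrt{n-1}]\).
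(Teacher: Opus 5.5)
There is a genuine gap in the range $27\le n\le 100000$. Your reduction for $n\le 26$ is fine: in the equality case $n=26$, $a=5$ the simple critical point is $a(n-2)/(n-1)$, at distance $a/(n-1)=1/5$ from $a$. Invoking Proposition~\ref{prop:large} is also harmless, since it does not depend on this lemma.

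The trouble is the moderate-degree estimate. With the parameters you actually fix, $\kappa=3/5$ and $a^2s\le 4$, your bound $|\mathcal R|\lesssim 25\,\widetilde c_n/n$ exceeds $1$ for every $27\le n\le 67$. None of the proposed repairs closes this:
\begin{enumerate}
\item Computing $\int_0^1 t(1-\tfrac25 t)^{n-2}\dd t$ exactly changes nothing, because the tail you discard is exponentially small.
\item The sharpening in your step (ii) is modest. The lower bound $a(1-\sqrt{1-s})$ is only of order $as/2$, not large; the final bound in \eqref{eq:xlower}, $a/(2(1+a^2))$, even decreases for $a\ge1$. At $a=5$ the Minkowski iteration improves $a^2s\le4$ only to about $3.7$, whereas with $\kappa=3/5$ you would need $a^2s\lesssim 1.6$ at $n=27$.
\item Noticing that $a^2s\le4$ already gives $s\le4/25$, i.e.\ $\kappa=2/5$, still leaves $|\mathcal R|$ bounded only by about $1.1$ at $n=27$.
\item The retained bound \eqref{eq:retained} gains only for $t<1/a_+$, and you give no estimate showing that this suffices.
\item The root-product fallback is simply wrong. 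Combining \eqref{eq:product-radial} with $\prod_j|q_j|^2\ge n^2/B_0^{n-1}$ gives $|a-\mu|\le B_0^{(n-1)/2}/(n(n-1))$. Since $B_0=n(1+a^2)/(n-1)\ge 26$ for $a\ge5$, the right-hand side is astronomically large (already $27^{12}/26$ at $n=27$, $a=5$), so it excludes nothing.
\end{enumerate}

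The missing idea is to bound $a|\mu|$ using the first inequality in \eqref{eq:moment-b}, rather than $|\mu|\le 1$ as in your step (i).
\begin{enumerate}
\item With $V=1$ that inequality gives $|1-a\mu|^2\le v\le s$, so $a|\mu|\le 1+\sqrt s$.
\item Combined with $a^2s\le 2ax-|\mu|^2\le 2a|\mu|$ from \eqref{eq:moment-a}, this yields $a^2s\le 2+2\sqrt s$.
\item For $a\ge5$ this forces $\sqrt s<1/3$. Hence you may take $\kappa=1/3$ and $\ell(t)=1-\tfrac23 t$, and also $a^2s<8/3$.
\item The linear branch of Lemma~\ref{lem:direct} then gives $|\mathcal R|<8(n-1)\int_0^{3/2}t(1-\tfrac23 t)^{n-2}\dd t=18/n$.
\item In addition $|F(1)|\le 3^{-(n-1)}$ and $|a\mu J|\le|\mu|\prod_j|q_j|\le s^{n/2}<3^{-n}$.
\item Since $a^2\le n-1$ forces $n\ge26$, you get $1\le 3^{-25}+3^{-26}+9/13<1$, uniformly in $n$.
\end{enumerate}
This closes all degrees at once, with no equality-case analysis, no appeal to Proposition~\ref{prop:large}, and no degree-by-degree checks.
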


\begin{proof}
We already know that \(a>0\).  Suppose, for contradiction, that
\(a\ge5\).  Taking \(V=1\) in the second inequality of
\eqref{eq:moment-a} gives
\[
|\mu-a|^2\le a^2(1-s).
\]
Expanding the left-hand side, we obtain
\[
a^2s
\le
2ax-|\mu|^2
\le
2a|\mu|.
\]
On the other hand, the first inequality in \eqref{eq:moment-b} gives
\[
|1-a\mu|^2\le v\le s,
\]
and hence
\[
a|\mu|
\le
1+|1-a\mu|
\le
1+\sqrt{s}.
\]
Writing \(s_0=\sqrt{s}\), we therefore have
\[
a^2s_0^2
\le
2+2s_0.
\]
Since \(a\ge5\),
\[
25s_0^2\le2+2s_0,
\]
which implies
$
s_0<1/3.
$
Consequently,
\begin{equation}\label{eq:compact-bounds}
s<\frac19,
\qquad
a^2s
\le
2a|\mu|
\le
2+2\sqrt{s}
<
\frac83.
\end{equation}

By \eqref{eq:endpoint-first-moment}, with \(V=1\),
\[
\frac1{n-1}\sum_{j=1}^{n-1}|1-aq_j|
\le
\sqrt{s}
<
\frac13.
\]
We may therefore take
\[
\kappa=\frac13,
\qquad
\ell(t)=1-\frac23t.
\]
The arithmetic--geometric mean inequality then gives
\begin{equation}\label{eq:compact-F}
|F(1)|
=
\prod_{j=1}^{n-1}|1-aq_j|
\le
3^{-(n-1)}.
\end{equation}

For the remainder term, use the linear branch of
\eqref{eq:direct-bounds}.  Since
\(\widetilde c_n<3\), \eqref{eq:compact-bounds} gives
\[
\begin{aligned}
|\mathcal R|
&\le
a^2(n-1)\widetilde c_ns
\int_0^1
t\left(1-\frac23t\right)^{n-2}\dd t\\
&<
8(n-1)
\int_0^{3/2}
t\left(1-\frac23t\right)^{n-2}\dd t\\
&=
\frac{18}{n}.
\end{aligned}
\]
Thus
\begin{equation}\label{eq:compact-R}
|\mathcal R|\le\frac{18}{n}.
\end{equation}

Finally, \eqref{eq:core}, together with
\(\Phi_{n-1}(a)\le1\), gives
$
|a\mu J|
\le
|\mu|
\prod_{j=1}^{n-1}|q_j|.
$
Since
$
|\mu|\le\sqrt{s}
$ and $
\prod_{j=1}^{n-1}|q_j|
\le
s^{(n-1)/2},
$
we obtain
\begin{equation}\label{eq:compact-J}
|a\mu J|
\le
s^{n/2}
<
3^{-n}.
\end{equation}

The normalization \eqref{eq:normalization} gives \(a^2\le n-1\).
Thus \(a\ge5\) forces \(n\ge26\).  Taking absolute values in
\eqref{eq:differential} and using \eqref{eq:origin},
\eqref{eq:compact-F}, \eqref{eq:compact-R}, and
\eqref{eq:compact-J}, we find
\[
\begin{aligned}
1
&\le
|F(1)|
+\frac{n-1}{n}|a\mu J|
+|\mathcal R|\\
&\le
3^{-(n-1)}
+3^{-n}
+\frac{18}{n}\\
&\le
3^{-25}+3^{-26}+\frac9{13}
<1,
\end{aligned}
\]
a contradiction.  Hence \(a<5\).
\end{proof}

It follows that every normalized counterexample lies in the closed
parameter box
\begin{equation}\label{eq:parameter-box}
(a,|\mu|,s)
\in
[0,5]\times[0,1]\times[0,1].
\end{equation}
The inclusion of the endpoints is deliberate.  Although admissible data
satisfy stricter inequalities, the closed box is more convenient for the
finite subdivision, and ensures that common boundaries of adjacent boxes
are covered automatically.

\Needspace{7\baselineskip}
\begin{prop}\label{prop:finite}
There is no normalized counterexample with
$
4\le n\le100000.
$
\end{prop}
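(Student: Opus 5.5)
We argue degree by degree. For each $n$ with $4\le n\le 100000$, we cover the closed box \eqref{eq:parameter-box} by finitely many rational boxes $Q=[a_-,a_+]\times[m_-,m_+]\times[s_-,s_+]$ and certify that every normalized counterexample with $(a,|\mu|,s)\in Q$ violates at least one of the necessary conditions derived above. Points with $|\mu|\ge 99999/100000$ and $n\le 13$ are already excluded by Lemma~\ref{lem:near}, so the subdivision only has to cover $|\mu|\le 99999/100000$ there. For larger $n$ no singular neighbourhood remains: the ratio test \eqref{eq:ratio-test} and the large-$n$ exponential decay supply a margin near $|\mu|=1$.

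On each box we apply the tests in increasing order of cost, stopping at the first one that fails uniformly on $Q$.

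\emph{Moment tests.} These are the inequalities of Lemma~\ref{lem:moments}, the product bound \eqref{eq:product-basic}, and the radial bounds of Lemma~\ref{lem:radial-bounds}. They use $V=V_n$, possibly sharpened by \eqref{eq:finite-radial} or \eqref{eq:packing-variance}, and are evaluated as interval inequalities in $(a,|\mu|,s)$. Here $x$ is bounded below through \eqref{eq:moment-a}, and $v=s-|\mu|^2$.

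\emph{Product bounds.} We obtain a lower bound $L$ for $\prod_j|q_j|^2$ from Lemma~\ref{lem:root-product}, using $\gamma$ minimized over the box, and an upper bound from Lemma~\ref{lem:radial-loss}. We then check that $L$ does not exceed this upper bound, together with compatibility with \eqref{eq:product-radial} and with the packing bound $M$ of Lemma~\ref{lem:packing}. The same product upper bound, inserted in \eqref{eq:core}, produces the constant $\mathcal C$.

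\emph{Integral tests.} With $b=\beta_s$ (convex, computed from interval bounds on $\rho$ and $a^2s$) and $\ell$ given by $\kappa$ from \eqref{eq:kappa}, we evaluate \eqref{eq:direct-test} using Lemma~\ref{lem:direct}, including the retained form \eqref{eq:retained}. We also evaluate \eqref{eq:absolute-test} and \eqref{eq:ratio-test} using Lemma~\ref{lem:interpolation}. For $n\ge5$ all integrals are bounded by the chord formulas \eqref{eq:chord-one} and \eqref{eq:chord-two} on a rational $t$-mesh, taking minima only after integration. For $n=4$ we use only the direct and coefficient tests.

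\emph{Coefficient tests.} Finally we apply \eqref{eq:coefficient-test}, with $d$ an upper bound for $\sqrt{Vv}$, and the signed test \eqref{eq:signed-test}, with $L_2$ obtained from \eqref{eq:quadratic-real} and Lemma~\ref{lem:real-variance}.

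When no test succeeds on $Q$, we bisect $Q$ along its widest normalized side and recurse. The certificate records the leaf boxes and, for each leaf, the index of the successful test. All quantities are rational, or are enclosed by rational bounds via directed rounding, so that each check is an exact inequality, as specified in Appendices~\ref{app:boxes} and~\ref{app:arithmetic}.

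Two points require justification. The first is monotonicity: every test bound must be a valid enclosure over the whole box. To ensure this, we write each bound in terms of monotone pieces in $a$, $|\mu|$ and $s$, with the exponents $(n-2)/2$ and similar ones treated through the monotone functions $\Phi_{n-1}$, $c_n$ and so on. The second is running time for degrees up to $10^5$. For large $n$ we expect a single coarse box, or very few boxes, to succeed, since the exponential terms are tiny. We also exploit monotonicity in $n$, for example of $\Phi_{n-1}$, to handle ranges of $n$ at once where possible. Combined with Proposition~\ref{prop:large} and the known cases $n=2,3$, this gives Theorem~\ref{thm:main}.

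The main obstacle is the region near the extremal configurations, namely $a\approx1$ with $|\mu|$ close to $1$, and moderately small $n$ just above the cutoff $99999/100000$. There all tests are nearly tight and the subdivision could blow up. We would first try the signed quadratic test \eqref{eq:signed-test}, which captures the favourable sign of the second-order term. If it is insufficient, we would extend the analytic argument of Lemma~\ref{lem:near} to a larger neighbourhood with $n$-dependent constants.
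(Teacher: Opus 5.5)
Your plan matches the paper's proof: a finite bisection partition of the box $[0,5]\times[0,1]\times[0,1]$ supplied by Lemma~\ref{lem:compact}, with every leaf excluded by one of the elementary, product, direct, centered, coefficient or signed tests under exact directed rounding (no centered integral tests in degree $4$, minima taken only after chord integration), and Lemma~\ref{lem:near} disposing of $|\mu|\ge 99999/100000$ for $n\le 13$. What makes this a proof in the paper is the actual pair of checked certificates---degrees $4$ to $13$ treated one at a time, and $14\le n\le 100000$ in $62$ consecutive blocks with uniform-in-$n$ bounds, where retained powers whose bases may exceed $1$ are bounded by a maximum over the two extreme exponents---and that check shows your anticipated obstacle does not arise: Lemma~\ref{lem:near} is invoked on only ten leaves (in degrees $4$ and $5$), so no enlarged analytic neighbourhood with $n$-dependent constants is needed.
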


\begin{proof}
The finite verification is encoded by the two certificate files
\href{https://github.com/zhangteng2000/borcea-variance/blob/83ee46a7c9711629745290ea8896805fc0239fcd/supplement/certificate_n4_to_n13.json}
{\file{certificate_n4_to_n13.json}}
and
\href{https://github.com/zhangteng2000/borcea-variance/blob/83ee46a7c9711629745290ea8896805fc0239fcd/supplement/certificate_n14_to_n100000.json}
{\file{certificate_n14_to_n100000.json}}.
The first treats the fixed degrees \(4\le n\le13\), while the second
treats \(14\le n\le100000\) in consecutive degree blocks.

Each certificate describes a finite binary partition of
\eqref{eq:parameter-box}.  Every internal node bisects one of the three
coordinate intervals at its exact midpoint, and every leaf records a
single exclusion test from Appendix~\ref{app:boxes}.  The corresponding
exact-arithmetic checkers are
\href{https://github.com/zhangteng2000/borcea-variance/blob/83ee46a7c9711629745290ea8896805fc0239fcd/supplement/verify_low.py}
{\file{verify_low.py}},
\href{https://github.com/zhangteng2000/borcea-variance/blob/83ee46a7c9711629745290ea8896805fc0239fcd/supplement/verify_mid.py}
{\file{verify_mid.py}},
and
\href{https://github.com/zhangteng2000/borcea-variance/blob/83ee46a7c9711629745290ea8896805fc0239fcd/supplement/verify_all.py}
{\file{verify_all.py}}.

The checkers reconstruct every box from its path using exact rational
arithmetic.  They reject malformed trees, verify that each internal node
has both children, and hence certify that the leaves cover the entire
parameter box.  They also verify that the degree blocks are consecutive,
with no omissions or overlaps, from degree \(4\) through degree \(100000\).

For every leaf, one of the following occurs.  Either a necessary
condition from Lemmas~\ref{lem:moments}, \ref{lem:basic-product}, and
\ref{lem:root-product}--\ref{lem:packing} fails strictly; or
Lemma~\ref{lem:near} applies; or one of the necessary estimates
\eqref{eq:direct-test}, \eqref{eq:absolute-test}, \eqref{eq:ratio-test},
\eqref{eq:coefficient-test}, and \eqref{eq:signed-test} has a right-hand
side strictly smaller than \(1\).  The interval bounds used in these
tests, including the direction of every degree-dependent estimate, are
specified in Appendix~\ref{app:boxes}.  Appendix~\ref{app:arithmetic}
shows that all numerical evaluations are performed with directed
exact-integer arithmetic, so every accepted leaf excludes the entire
box represented by that leaf.

At precision \(S=2^{100}\), the complete verification gives the
following counts.
\begin{center}
\begin{tabular}{@{}rrrr@{}}
\toprule
Degrees \(n\) & Degree blocks & Leaf boxes & Maximum depth\\
\midrule
$4$--$13$ & $10$ & $12821$ & $53$\\
$14$--$100000$ & $62$ & $20850$ & $33$\\
\midrule
Total & $72$ & $33671$ & \\
\bottomrule
\end{tabular}
\end{center}
The reported margins below concern the tests whose final rigorous upper
bound is compared with \(1\); the elementary rational contradictions
and applications of Lemma~\ref{lem:near} are verified separately.

If such an upper bound is represented as \(U/S\), the smallest positive
values of \(S-U\) in the low-degree and block verifications are,
respectively,
\begin{equation}\label{eq:certificate-margins}
 859210771742754798767913
 \quad\hbox{and}\quad
 3409997860291474113416360.
\end{equation}
Thus every accepted numerical test has strictly positive separation from
its threshold.

The same certificates also pass at precision \(2^{160}\).  The exact
commands, file hashes, and verification details are recorded in
Appendix~\ref{app:arithmetic} and in the
\href{https://github.com/zhangteng2000/borcea-variance/tree/83ee46a7c9711629745290ea8896805fc0239fcd}
{supplementary repository at commit \texttt{83ee46a7c971}}.
Since the verified partitions cover \eqref{eq:parameter-box},
Lemma~\ref{lem:compact} completes the exclusion of all normalized
counterexamples with \(4\le n\le100000\).
\end{proof}

We can now complete the proof of the main theorem.

\begin{proof}[Proof of Theorem~\ref{thm:main}]
If \(\sigma_2(P)=0\), then all zeros of \(P\) coincide, and
\eqref{eq:main} is immediate.  If the prescribed zero is multiple, then
it is itself a critical point, so there is again nothing to prove.

The cases \(n=2,3\) follow from
\cite[Theorem~3.2]{KPPSS11}.
Suppose therefore that \(n\ge4\), that \(\sigma_2(P)>0\), and that the
prescribed zero is simple.  If \eqref{eq:main} failed, the polynomial
could be normalized as in \eqref{eq:normalization}.  In the normalized
setting, \(a=0\) is impossible: otherwise
\(|\zeta_j|>1\) for every critical point, contradicting
\[
V_*
=
\frac1{n-1}\sum_{j=1}^{n-1}|\zeta_j|^2
\le
\frac{n-2}{n-1}
<1.
\]
Thus \(a>0\).

Proposition~\ref{prop:finite} excludes
$
4\le n\le100000,
$
while Proposition~\ref{prop:large} excludes
$
n\ge100001.
$
Hence no counterexample exists, and \eqref{eq:main} follows for every
degree \(n\ge2\).

The constant in \eqref{eq:main} is sharp.  Indeed, for
$
P(z)=z^n-1,
$
the centroid of the zeros is \(0\), and
$
\sigma_2(P)^2
=
\frac1n\sum_{j=1}^n|z_j|^2
=
1.
$
The only critical point is \(0\), while every zero of \(P\) has modulus
\(1\).  Thus every zero is at distance exactly
\(\sigma_2(P)=1\) from the unique critical point.
\end{proof}

\section*{Data availability}
The certificates, exact checkers, optional generators, and verification logs are available in the supplementary repository \cite{ZhaS}. This paper uses the files at
\href{https://github.com/zhangteng2000/borcea-variance/tree/83ee46a7c9711629745290ea8896805fc0239fcd}{commit \texttt{83ee46a7c971}}; the certificate digests are recorded in Appendix~\ref{app:arithmetic}. These certificates are an essential part of Proposition~\ref{prop:finite}. Reproduction requires only the Python standard library.

\appendix
\section{Bounds on a parameter box}\label{app:boxes}

This appendix gives the mathematical specification of the finite
verification used in Proposition~\ref{prop:finite}.  The labels attached
to the tests in the certificate files are program identifiers only; they
do not introduce additional mathematical assertions.  Throughout the
appendix, an \emph{admissible point} means a triple of parameters arising
from a normalized counterexample.

For convenience, write
$
r:=|\mu|.
$
A parameter box will be denoted by
\[
\mathcal B
=
[a_-,a_+]\times[r_-,r_+]\times[s_-,s_+],
\]
so that every admissible point in \(\mathcal B\) satisfies
\[
a_-\le a\le a_+,\qquad
r_-\le r=|\mu|\le r_+,\qquad
s_-\le s\le s_+.
\]
Define
\[
\begin{gathered}
v_+=s_+-r_-^2,\qquad
v_-=(s_--r_+^2)_+,\\
d_a=\max\{0,a_--r_+,r_--a_+\},\\
d_h=\max\{0,1-a_+r_+,a_-r_--1\},\qquad
\Delta_+=1-s_-.
\end{gathered}
\]
Since
$
v=s-r^2,
$
we have
\[
v_-\le v\le v_+.
\]
Moreover,
$
|a-\mu|
\ge
|a-r|
\ge
d_a
$
and
$
|1-a\mu|
\ge
|1-ar|
\ge
d_h.
$
Finally,
$
1-s\le\Delta_+.
$

All interval estimates below are interpreted under the assumption that
the box under consideration contains an admissible point.

\subsection{Refinements for a fixed small degree}\label{app:low}

In this subsection \(4\le n\le13\) is fixed.  We begin with the
Schoenberg bound in \eqref{eq:critical-moments}
$
V_0:=\frac{n-2}{n-1}.
$
If
$
t_*:=1-(n-1)\Delta_+>0,
$
define
\[
D_+^2
=
\frac{\Delta_+^2}{t_*},
\qquad
V_0
\leftarrow
\min\left\{
\frac{n-2}{n-1},
1+\frac{\Delta_+}{t_*}-a_-^2
\right\}.
\]
Indeed, Lemma~\ref{lem:radial-bounds} gives
\[
|a-\mu|^2
\le
\frac{(1-s)^2}{1-(n-1)(1-s)}
\le
D_+^2,
\]
and
\[
V_*
\le
1+
\frac{1-s}{1-(n-1)(1-s)}
-a^2
\le
1+\frac{\Delta_+}{t_*}-a_-^2.
\]
If \(t_*\le0\), the quantity \(D_+^2\) is left unspecified and we retain
only the original bound \(V_0=(n-2)/(n-1)\).  A negative value of
\(V_0\) excludes the box immediately.

For the root-product estimates, put
\[
\begin{gathered}
B_+
=
\frac{n}{n-1}(1+a_+^2),\qquad
C_+
=
\frac{n}{(n-1)^2}(n-1-a_-^2)_+,\\
g_-
=
\max\left\{
0,\,
a_--\frac{1+a_+^2}{2}r_+,\,
\frac{1+a_-^2}{2}r_--a_+
\right\},
\end{gathered}
\]
and define
\[
\gamma_-=
\begin{cases}
\displaystyle
\frac{n-1}{3}
\left(\frac{n}{n-1}\right)^2
\frac{g_-^2}{C_+},
& C_+>0,\\[8pt]
0,
& C_+=0.
\end{cases}
\]
Finally, set
\[
L
=
\frac{n^2(1+\gamma_-)}{B_+^{n-1}}.
\]
By \eqref{eq:ABC},
\[
B_0\le B_+,\qquad C_0\le C_+,
\]
and
\[
\left|A_0-\frac{B_0\mu}{2}\right|
=
\frac{n}{n-1}
\left|
a-\frac{1+a^2}{2}\mu
\right|
\ge
\frac{n}{n-1}g_-.
\]
Lemma~\ref{lem:root-product} therefore gives
\[
L
\le
\prod_{j=1}^{n-1}|q_j|^2.
\]
If \(C_0=0\), the covariance gap in
Lemma~\ref{lem:root-product} vanishes.  Thus no division by zero occurs:
a box containing admissible data with \(C_0=0\) cannot simultaneously
produce a strictly positive lower bound \(g_-\).

Next, define
\[
d_s
=
\max\{0,a_-s_--r_+,r_--a_+s_+\}.
\]
Since
$
|\mu-as|
\ge
|r-as|
\ge
d_s,
$
Lemma~\ref{lem:radial-loss} gives the following refinement.  If
\(s_+V_0>0\), set
\[
\lambda_-
=
\frac{(n-1)d_s^2}{2s_+V_0};
\]
otherwise set \(\lambda_-=0\).  Then
\begin{equation}\label{eq:box-loss}
\prod_{j=1}^{n-1}|q_j|^2
\le
s_+^{n-1}e^{-\lambda_-}.
\end{equation}
When \(V_0=0\), the choice \(\lambda_-=0\) uses only the
arithmetic--geometric mean bound and therefore involves no division by
\(V_0\).

Suppose now that
$
0<L\le s_+^{n-1}.
$
The checker chooses \(\eta\) by \(30\) exact bisections on
\([0,s_+]\), retaining a lower endpoint for which \eqref{eq:eta} holds
with \(s_+\) in place of \(s\).  Whenever this lower endpoint is
positive, Lemma~\ref{lem:packing} supplies \(k,z,M,T\).  Let
\(M_+\ge M\) be an upward-rounded upper bound.  We then refine
\begin{equation}\label{eq:box-packed}
D_+^2
\leftarrow
\min\left\{
D_+^2,
\left(\frac{M_+}{n-1}\right)^2
\right\},
\qquad
V
=
\min\{V_0,T-a_-^2\}.
\end{equation}
If \(D_+^2\) was previously unspecified, the first minimum in
\eqref{eq:box-packed} is interpreted as the newly available bound
\((M_+/(n-1))^2\).  If the packing refinement is unavailable, we simply
set \(V=V_0\).

The product loss \eqref{eq:box-loss} continues to use the original
quantity \(V_0\), rather than the subsequently refined \(V\).  Thus the
order of the preceding refinements introduces no circularity.

For later use, define the positive Taylor polynomial
\begin{equation}\label{eq:T32}
T_{32}(x)
:=
\sum_{j=0}^{32}\frac{x^j}{j!},
\qquad x\ge0.
\end{equation}
Then
\[
T_{32}(x)\le e^x.
\]

For clarity, Table~\ref{tab:primitive} lists the elementary exclusion
tests used for a fixed degree.  The tests involving only the original
Schoenberg bound may use \(V_0\), whereas
\file{moment_A_packed} and \file{moment_B_packed} use the refined value
\(V\).  Every strict inequality displayed in the table excludes the
entire box.

\begin{table}[htbp]
\centering\small
\caption{Elementary exclusion tests for a fixed degree.}
\label{tab:primitive}
\begin{tabular}{@{}p{.34\textwidth}p{.62\textwidth}@{}}
\toprule
Identifier & Sufficient condition for exclusion\\
\midrule
\file{degree_bound}
& $a_-^2>n-1$\\
\file{variance_nonnegative}
& $v_+<0$\\
\file{inverse_moment}
& $(a_+^2+V_0)s_+<1$\\
\file{moment_A}
& $d_a^2>(a_+^2+V_0-1)_+(1-s_-)$\\
\file{moment_B}
& $d_h^2>V_0v_+$\\
\file{product}
& $(B_+s_+)^{n-1}<n^2$\\
\file{finite_radial}
& $d_a^2(1-(n-1)\Delta_+)>\Delta_+^2$\\
\file{product_stable}
& $(B_+s_+)^{n-1}<n^2T_{32}(\lambda_-)$\\
\file{product_root_stable}
& $(B_+s_+)^{n-1}<n^2(1+\gamma_-)T_{32}(\lambda_-)$\\
\file{product_radial}
& $L>1$, or $0<L\le1$ and
  $(n-1)^2d_a^2L>(1-L)^2$\\
\file{product_logcap}
& $(n-1)d_a>M_+$ when \(M_+\) is available\\
\file{critical_variance_negative}
& $V_0<0$ or $V<0$\\
\file{moment_A_packed}
& $d_a^2>(a_+^2+V-1)_+(1-s_-)$\\
\file{moment_B_packed}
& $d_h^2>Vv_+$\\
\file{near_uniform}
& $r_-\ge99999/100000$\\
\bottomrule
\end{tabular}
\end{table}

The test \file{product_radial} follows from
\eqref{eq:product-radial}.  Indeed, the function
$
x\longmapsto\frac{1-x}{\sqrt{x}}
$
is decreasing on \((0,1]\), and
$
L\le\prod_{j=1}^{n-1}|q_j|^2.
$
The test \file{finite_radial} is relevant only when
\[
1-(n-1)\Delta_+>0.
\]
Finally, \file{near_uniform} is exactly Lemma~\ref{lem:near}; it is not
a numerical approximation to that lemma.

For the remaining tests assume
$
V\ge0,  v_+\ge0,
$
and define
\[
H
=
\max\left\{
r_-^2+(1-V)(1-s_+),\,
1-(a_+^2+V)v_+
\right\},
\qquad
A=a_-^2s_-.
\]
If \(D_+^2\) is available, include
\[
a_-^2(1-s_+)+r_-^2-D_+^2
\]
as a third entry in the maximum defining \(H\).  Lemma~\ref{lem:moments}
then gives
\[
\rho\ge H.
\]
The third entry follows from the identity
\[
\rho
=
a^2(1-s)+r^2-|a-\mu|^2.
\]
Since
$
\beta_s(t)
=
1-\rho t-a^2s\,t(1-t),
$
we obtain the box-wise majorant
\[
\beta_s(t)
\le
b(t)
:=
1-Ht-At(1-t).
\]
If \(H>1\), or if \(b\) has a negative minimum on \([0,1]\), the box is
excluded; this is the test \file{beta_negative}.  Otherwise the checker
requires
\[
0\le H\le1,
\qquad
b(t)\ge0
\quad(0\le t\le1).
\]
When \(A>0\), the only possible interior minimum occurs at
$
t=\frac{H+A}{2A},
$
and its value is
$
1-\frac{(H+A)^2}{4A}.
$
Thus this check is entirely rational.

Define
\begin{equation}\label{eq:box-endpoint}
\kappa^2
=
\min\{1-H,Vs_+\},
\qquad
d^2
=
\max\left\{
0,\,
\min\{Vv_+,1-H-a_-^2v_-\}
\right\},
\qquad
\ell(t)=1-t+\kappa t.
\end{equation}
The first definition ensures \eqref{eq:kappa}.  For the second, recall
that
$
|1-a\mu|^2\le Vv
$
and
$
|1-a\mu|^2
=
\beta_s(1)-a^2v.
$
Hence, on an admissible box,
\[
|1-a\mu|^2
\le
\min\{Vv_+,1-H-a_-^2v_-\}.
\]
Both quantities inside this minimum are then nonnegative, so the outer
maximum with \(0\) does not weaken the asserted upper bound.

Let \(\Phi_+\) denote the maximum of \(\Phi_{n-1}\) over
$
[a_-,a_+]\cap[0,\sqrt{n-1}].
$
A box for which this intersection is empty has already been excluded by
the degree bound.  By Lemma~\ref{lem:basic-product}, the maximum is
attained at \(1\) if \(1\in[a_-,a_+]\), at \(a_+\) if \(a_+<1\), and
at \(a_-\) if \(a_->1\).  Thus no irrational interval endpoint is
needed.

Combining \eqref{eq:core} and \eqref{eq:box-loss}, and using
$
e^{\lambda_-/2}\ge T_{32}(\lambda_-/2),
$
gives the interval upper bound
\[
\mathcal C_+
=
\frac{
\Phi_+r_+s_+^{(n-1)/2}
}{
T_{32}(\lambda_-/2)
}
\ge
|a\mu J|.
\]
A box entirely outside \(a^2\le n-1\) has already been excluded, so
every evaluation of \(\Phi_+\) has a nonnegative base.

The tests \file{direct} and \file{direct_retained} evaluate
\eqref{eq:direct-test}, using \eqref{eq:direct-bounds} alone or together
with \eqref{eq:retained}.  The tests \file{center_absolute} and
\file{center_ratio} evaluate \eqref{eq:absolute-test} and
\eqref{eq:ratio-test}; these tests are used only for \(n\ge5\).
All integrals are bounded by \eqref{eq:chord-one} and
\eqref{eq:chord-two}, with the constants in \eqref{eq:constants}.

For a mesh interval \(I=[l,u]\), define
\begin{equation*}
d_I
:=
\max\{0,1-a_+r_+u,a_-r_-l-1\}.
\end{equation*}
Then
\[
|1-a\mu t|
\ge d_I
\qquad(t\in I)
\]
for every admissible point in the box.  The nonsingular estimate
\eqref{eq:interpolation-second} is always retained; the estimate
\eqref{eq:interpolation-first} is used only when \(d_I>0\).

Whenever a centered integral test is evaluated, let
$
\mathcal I_{2,+}
$
denote the rigorous upper bound, obtained from the chord estimates, for
the corresponding integral
$
\int_0^1t^2\mathcal U(t)\dd t.
$
We then write
\begin{equation*}
K_+
:=
na_+^3r_+\mathcal I_{2,+}.
\end{equation*}
By \eqref{eq:K-definition},
\[
K\le K_+.
\]
Thus \(K_+\) is simply the box-wise upper bound for the quantity \(K\)
appearing in Lemma~\ref{lem:integral-tests}.

The test \file{center_coeff} uses \eqref{eq:coefficient-test} with
\(\mathcal C_+\), the value \(d\) from \eqref{eq:box-endpoint}, and
upper endpoints in every nonnegative coefficient.

The final test \file{center_positive} uses Lemma~\ref{lem:phase} in the
form \eqref{eq:signed-test}.  It is used only when
\[
r_->0,
\qquad
h_+:=a_+r_+\le1.
\]
Set
$
h_-:=a_-r_-.
$
If the packing refinement above is unavailable, put \(\eta=0\).
Otherwise \(\eta\) denotes the positive lower endpoint produced by the
packing bisection.  Define
\[
\begin{gathered}
t_0
=
\max\{0,\eta,1-(n-1)(1-s_-)\},\\
R_-
=
\max\left\{
-1,\,
(n-1)r_--(n-2),\,
\frac{(n-1)^2r_-^2+t_0-(n-2)^2}
{2(n-1)r_-}
\right\},\\
W_+
=
\min\left\{
(n-2)(1-r_-)^2,\,
(1-r_-)(r_+-R_-)
\right\},\\
L_2
=
\frac{n-1}{2}(v_--2W_+).
\end{gathered}
\]
Each of the three lower bounds in \eqref{eq:real-lower} is nondecreasing
in \(r=|\mu|>0\) when \(0\le t_0\le1\) and \(n\ge4\).
Lemma~\ref{lem:real-variance} therefore gives
\[
W\le W_+.
\]
The checker invokes this test only when
$
L_2>0.
$

Finally set
\[
x_-
=
\frac{H+a_-^2s_-}{2a_+},
\qquad
\varepsilon
=
\sqrt{\max\{0,2(1-x_-/r_+)\}},
\qquad
K_-
=
nh_-I_{2,n-3}(1-h_+).
\]
The identity
$
2ax=\rho+a^2s
$
gives
$
x\ge x_-.
$
Since
$
|e^{i\theta}-1|^2
=
2\left(1-\frac{x}{r}\right),
$
we obtain
$
|e^{i\theta}-1|
\le\varepsilon.
$
Moreover,
\[
K_2(a|\mu|)
=
na|\mu|
I_{2,n-3}(1-a|\mu|)
\ge
nh_-I_{2,n-3}(1-h_+)
=
K_-,
\]
so that
\[
0\le K_-\le K_2(a|\mu|).
\]
In \eqref{eq:phase}, \(h_0\) is replaced by the box upper bound \(h_+\)
when estimating \(E_2\) from above.  In \eqref{eq:signed-test}, the
favorable quadratic term is bounded from above by
$
-a_-^2K_-L_2.
$
Because this quantity is subtracted, it is rounded in the direction
appropriate for a lower bound on the positive factor
\(a^2K_-L_2\), as explained in Appendix~\ref{app:arithmetic}.

\subsection{Several consecutive degrees in one block}\label{app:mid}

We now consider a degree block
\[
n-1\in[M,N],
\qquad
13\le M\le N.
\]
The block checker uses only Lemma~\ref{lem:moments},
Lemma~\ref{lem:basic-product}, and the integral estimates of
Section~\ref{sec:integral}.  Set
$
V
=
1-\frac1N,
v_+=s_+-r_-^2,
$
and
\[
H
=
\max\left\{
r_-^2+\frac{1-s_+}{N},\,
1-(a_+^2+V)v_+
\right\},
\qquad
A=a_-^2s_-.
\]
Since
\[
V_*
\le
1-\frac1{n-1}
\le
1-\frac1N
=
V,
\]
these are valid uniform bounds throughout the degree block.

Use
$
b(t)=1-Ht-At(1-t),
\kappa^2=\min\{1-H,Vs_+\},
$
together with the same nonnegativity checks as in
Appendix~\ref{app:low}.  The elementary tests are the degree bound, the
nonnegative-variance test, the two covariance tests, the inverse-moment
test, the negative-\(b\) test, and the product test described below.

Lemma~\ref{lem:basic-product} gives the uniform bound
\[
|a\mu J|
\le
\mathcal C_+
:=
\max_{\substack{a\in[a_-,a_+]\\a^2\le N}}
\Phi_N(a)\,
r_+s_+^{M/2}.
\]
The maximum of \(\Phi_N\) is evaluated by the same endpoint rule used in
Appendix~\ref{app:low}.

For the elementary product test, put
\[
B
=
\frac{M+1}{M}(1+a_+^2)s_+.
\]
If \(B\le1\), then the actual base in \eqref{eq:product-basic} is at
most \(1\), whereas \eqref{eq:product-basic} requires its positive power
to be at least \(n^2>1\).  Thus \(B\le1\) already excludes the box.

If \(B>1\), then increasing the exponent gives an upper bound, and the
condition
\[
B^N<(M+1)^2
\]
again contradicts \eqref{eq:product-basic}.  This is the block product
test.

For the direct integral estimate, we use the coefficients
\[
\min\{1,(5/3)s_+\},
\qquad
\min\{1,3s_+\},
\]
the powers
\[
b^{(M-1)/2},
\qquad
\ell^{M-1},
\]
and the prefactor \(Na_+^2\).  The two endpoint terms in
\eqref{eq:direct-test} are bounded by
\[
\kappa^M,
\qquad
\frac{N}{N+1}\mathcal C_+.
\]
These are valid upper bounds because
\[
0\le b,\ell,\kappa\le1.
\]

For \file{direct_retained}, let \(I=[l,u]\) be a mesh interval and set
$
 \delta_I:=(1-a_+u)_+.
$
For every admissible point and every \(t\in I\),
\[
 |1-atq_j|
 \ge (1-at|q_j|)_+
 \ge \delta_I,
 \qquad 1\le j\le n-1.
\]
Consequently,
\[
 b(t)\ge\delta_I^2,
 \qquad
 \ell(t)\ge\delta_I.
\]
For fixed \(t\), the functions
\[
 \frac{(n-1)b(t)-\delta_I^2}{n-2},
 \qquad
 \frac{(n-1)\ell(t)-\delta_I}{n-2}
\]
are nonincreasing in \(n-1\). We therefore enlarge their bases by
setting \(n-1=M\), and write
\[
 B_I(t):=
 \left(\frac{Mb(t)-\delta_I^2}{M-1}\right)_+,
 \qquad
 L_I(t):=
 \left(\frac{M\ell(t)-\delta_I}{M-1}\right)_+.
\]

These enlarged bases need not be at most \(1\). The corresponding
powers are therefore bounded by
\begin{equation}\label{eq:block-retained-powers}
 \max\{B_I(t)^{(M-1)/2},B_I(t)^{(N-1)/2}\},
 \qquad
 \max\{L_I(t)^{M-1},L_I(t)^{N-1}\}.
\end{equation}
Both expressions are convex functions of \(t\): the bases are
nonnegative and convex, all exponents are at least one, and taking
the maximum preserves convexity. We apply \eqref{eq:chord-one}
to these two majorants separately.

For the centered integral estimates, the checker uses the four uniform
coefficients
\[
\frac{5N}{6d_I},
\qquad
\frac{3N}{2d_I},
\qquad
\frac{3N(N-1)}2,
\qquad
\frac{9N(N-1)}2.
\]
The first two are omitted when \(d_I=0\).  The corresponding powers are
\[
b^{(M-1)/2},
\qquad
\ell^{M-1},
\qquad
b^{(M-2)/2},
\qquad
\ell^{M-2}.
\]

In the definition of \(K\), the prefactor satisfies
\[
na^3r
\le
(N+1)a_+^3r_+.
\]
Thus \(K_+\) denotes the upper bound obtained by multiplying
\((N+1)a_+^3r_+\) by the rigorous chord bound for the relevant integral
\(\int t^2\mathcal U(t)\dd t\).

For \eqref{eq:absolute-test}, we use
\[
(Vv)^{n/2}
\le
(Vv_+)^{(M+1)/2}.
\]
For \eqref{eq:ratio-test}, we use
\[
(1+r_+)
\left\{
V^{(M+1)/2}
(1-r_-^2)^{(M-1)/2}
+
K_+
\right\}.
\]
All bases in these centered powers lie in \([0,1]\), so decreasing the
exponent enlarges the expression.  The powers in
\eqref{eq:block-retained-powers} are the deliberate exception, because
their bases are not known a priori to be at most \(1\).

\section{Exact arithmetic and reproducibility}\label{app:arithmetic}

\subsection{Certificates and coverage}

The two certificate files
\href{https://github.com/zhangteng2000/borcea-variance/blob/83ee46a7c9711629745290ea8896805fc0239fcd/supplement/certificate_n4_to_n13.json}
{\file{certificate_n4_to_n13.json}}
and
\href{https://github.com/zhangteng2000/borcea-variance/blob/83ee46a7c9711629745290ea8896805fc0239fcd/supplement/certificate_n14_to_n100000.json}
{\file{certificate_n14_to_n100000.json}}
encode the partitions used in Proposition~\ref{prop:finite}.

A degree block records an inclusive range \([M,N]\) for \(n-1\), a
positive mesh-refinement parameter, and a finite list of leaf paths
together with their test identifiers.  The initial parameter box is
\eqref{eq:parameter-box}.

Each pair of characters in a path records a coordinate index in
\(\{0,1,2\}\) and a child index in \(\{0,1\}\).  The corresponding
coordinate interval is bisected at its exact midpoint.  Both children
are represented by closed intervals, so adjacent leaves may share a
boundary but have disjoint interiors.

The checker inserts all paths into a tree.  It rejects repeated leaves,
a leaf with descendants, conflicting split coordinates, or an internal
node missing either child.  All endpoints are reconstructed as exact
rational numbers.  Induction on this finite tree proves that the leaves
cover the initial box.  The checker also verifies that consecutive
degree blocks have neither gaps nor overlaps.  Thus no information from
the floating-point certificate generator is accepted merely because it
appears in a certificate.

For a block beginning at \(n-1=M\), set
\[
L_{\mathrm{mesh}}
=
\max\{4,\lceil\log_2M\rceil+2\}.
\]
The mesh first divides
$
[0,2^{-L_{\mathrm{mesh}}}]
$
into \(q\) equal subintervals, and then divides each interval
\[
[2^{-j},2^{-j+1}],
\qquad
j=L_{\mathrm{mesh}},\ldots,1,
\]
into \(q\) equal subintervals.  The small-degree certificates use
\(q=32\), the degree-\(14\) block uses \(q=16\), and all remaining
blocks use \(q=4\).  The checker verifies directly that the resulting
rational mesh points increase from \(0\) to \(1\).

The integral bounds are exactly \eqref{eq:chord-one} and
\eqref{eq:chord-two}; no floating-point quadrature is used in the
verification.

\subsection{Arithmetic operations}

Fix
$
S=2^\nu,
$
where the primary verification uses \(\nu=100\).  Here \(\nu\) denotes
the bit precision and is unrelated to the majorant \(b(t)\) used
earlier.

A nonnegative integer \(U\) represents the rigorous upper bound \(U/S\).
For a nonnegative rational number \(p/q\), its upward enclosure is
$
\left\lceil\frac{Sp}{q}\right\rceil.
$
Multiplication of upper bounds is implemented by
\begin{equation}\label{eq:rounding}
(U,V)
\longmapsto
\left\lceil\frac{UV}{S}\right\rceil,
\qquad
\left(\frac pq,U\right)
\longmapsto
\left\lceil\frac{pU}{q}\right\rceil.
\end{equation}

For a square root, if \(U/S\) is the current upper bound, the checker
chooses the smallest nonnegative integer \(W\) satisfying
$
W^2\ge US.
$
Then
\[
\frac WS\ge\sqrt{\frac US}.
\]
The integer \(W\) is obtained from the integer square root of \(US\),
with one added unless \(US\) is a perfect square.

Integer powers are evaluated by repeated squaring using
\eqref{eq:rounding}.
For a half-integer power \(x^{k+1/2}\), the checker multiplies
an upward enclosure of \(x^k\) by an upward enclosure of
\(\sqrt{x}\), again rounding upward.

The favorable negative term in \eqref{eq:signed-test} requires the
opposite rounding direction.  If \(c\ge0\) is a rational lower bound
for the positive quantity \(a^2K_-L_2\), the checker subtracts
$
\lfloor Sc\rfloor,
$
rather than \(\lceil Sc\rceil\), from the upper-bound numerator.  Thus
directed rounding can never make the final upper bound artificially
small.

Exponential factors are handled using the positive Taylor polynomial
\(T_{32}\) from \eqref{eq:T32}; no floating-point exponential or
logarithm is used in an acceptance decision.

The interpretation of every stored integer as a rigorous upper or lower
bound is preserved inductively by these operations.  A main numerical
test is accepted only when its final upper-bound numerator \(U\)
satisfies
$
U<S.
$
Thus the positive margins in \eqref{eq:certificate-margins} represent
strict mathematical inequalities, not numerical tolerances.  The
elementary exclusions are comparisons of exact rational numbers.
Timing information in the verification logs plays no role in
acceptance.

\subsection{Reproduction}

The essential supplementary files are
\href{https://github.com/zhangteng2000/borcea-variance/blob/83ee46a7c9711629745290ea8896805fc0239fcd/supplement/certificate_n4_to_n13.json}
{\file{certificate_n4_to_n13.json}},
\href{https://github.com/zhangteng2000/borcea-variance/blob/83ee46a7c9711629745290ea8896805fc0239fcd/supplement/certificate_n14_to_n100000.json}
{\file{certificate_n14_to_n100000.json}},
\href{https://github.com/zhangteng2000/borcea-variance/blob/83ee46a7c9711629745290ea8896805fc0239fcd/supplement/verify_low.py}
{\file{verify_low.py}},
\href{https://github.com/zhangteng2000/borcea-variance/blob/83ee46a7c9711629745290ea8896805fc0239fcd/supplement/verify_mid.py}
{\file{verify_mid.py}},
and
\href{https://github.com/zhangteng2000/borcea-variance/blob/83ee46a7c9711629745290ea8896805fc0239fcd/supplement/verify_all.py}
{\file{verify_all.py}}.

The checker
\href{https://github.com/zhangteng2000/borcea-variance/blob/83ee46a7c9711629745290ea8896805fc0239fcd/supplement/verify_low.py}
{\file{verify_low.py}}
treats \(4\le n\le13\) using Appendix~\ref{app:low}, while
\href{https://github.com/zhangteng2000/borcea-variance/blob/83ee46a7c9711629745290ea8896805fc0239fcd/supplement/verify_mid.py}
{\file{verify_mid.py}}
treats \(14\le n\le100000\) using Appendix~\ref{app:mid}.
The former imports the integer-arithmetic routines and mesh construction
from the latter, but neither checker imports a certificate generator.

The wrapper
\href{https://github.com/zhangteng2000/borcea-variance/blob/83ee46a7c9711629745290ea8896805fc0239fcd/supplement/verify_all.py}
{\file{verify_all.py}}
invokes both checkers, verifies their degree endpoints, and records the
combined result.  The arithmetic self-tests are contained in
\href{https://github.com/zhangteng2000/borcea-variance/blob/83ee46a7c9711629745290ea8896805fc0239fcd/supplement/self_test.py}
{\file{self_test.py}}.

Use Python~3.10 or later, without optimization flags.
From the root of the repository at the pinned commit, run
\begin{verbatim}
cd supplement
python3 self_test.py
python3 verify_all.py --bits 100
python3 verify_all.py --bits 160
\end{verbatim}
Only the Python standard library is required.
The optional certificate generators are not used by these commands.  The
optional certificate generators require NumPy, but they are not part of
the acceptance procedure.

The two certificate files are identified by the following SHA--256
digests; line breaks below are not part of the digests.
\begin{center}\small
\begin{tabular}{@{}l@{}}
\href{https://github.com/zhangteng2000/borcea-variance/blob/83ee46a7c9711629745290ea8896805fc0239fcd/supplement/certificate_n4_to_n13.json}
{\file{certificate_n4_to_n13.json}}\\
\texttt{4334dd70fe5317040124115239f92ecc}\\
\texttt{4e220cbf6c3576407396c3ea9df19f8b}\\[4pt]
\href{https://github.com/zhangteng2000/borcea-variance/blob/83ee46a7c9711629745290ea8896805fc0239fcd/supplement/certificate_n14_to_n100000.json}
{\file{certificate_n14_to_n100000.json}}\\
\texttt{f9131604a764777836afed040bd73b1f}\\
\texttt{8ca24c5696022460b5547cb509af766d}\\
\end{tabular}
\end{center}

For the first certificate, the numbers of leaves in degrees
\(4,\ldots,13\) are
\[
532,\ 1103,\ 1594,\ 2540,\ 1753,\ 1420,\ 1164,\ 1002,\ 904,\ 809,
\]
respectively.  Ten leaves, five in degree \(4\) and five in degree \(5\),
invoke Lemma~\ref{lem:near}.  Every other leaf is checked by one of the
exact rational or integer tests specified in Appendix~\ref{app:boxes}.

The complete runs at \(100\) and \(160\) bits verify the same
partitions.  They are checks at two different precisions, rather than
two independent implementations of the full argument.  Neither run is
a proof-assistant formalization of the analytic lemmas.  The arithmetic
self-tests verify elementary identities, selected analytic constants,
and rejection of malformed or deliberately false certificates; these
tests supplement, but do not replace, any mathematical argument.

\subsection{Elementary analytic constants}

For completeness, the exponential comparisons used in
Proposition~\ref{prop:large} can also be verified entirely by rational
arithmetic.  If \(x\ge0\) and \(x<k+2\), then
\begin{equation}\label{eq:exp-series-bounds}
 \sum_{j=0}^k\frac{x^j}{j!}
 \le
 e^x
 \le
 \sum_{j=0}^k\frac{x^j}{j!}
 +
 \frac{x^{k+1}}{(k+1)!}
 \frac1{1-x/(k+2)}.
\end{equation}
The upper bound follows because, beginning with the
\((k+1)\)-st term, the ratio of successive terms in the exponential
series is at most \(x/(k+2)<1\).  Hence the remaining tail is bounded
by a geometric series.

Formula~\eqref{eq:exp-series-bounds} with \(k=20\) verifies the four
exponential comparisons used in Step~2 of
Proposition~\ref{prop:large}.  It also yields
\[
e<\frac{11}{4}<\frac{25}{9},
\]
which implies the uniform bounds following \eqref{eq:constants}.
The remaining estimates in Proposition~\ref{prop:large} use only the
displayed rational inequalities,
\[
1-u\le e^{-u}\qquad(u\ge0),
\]
and
\[
e^x\ge\frac{x^6}{720}\qquad(x\ge0).
\]
Thus the large-degree argument is analytic throughout its stated range;
none of its estimates is obtained by extrapolating the finite
verification.

\begin{thebibliography}{KPPSS11}

\bibitem[Ada26]{Ada26}
T.~Adamczewski,
\emph{Smale's mean value conjecture ($K=1$)},
aiXiv:2609.01654 [math.AG], 2026.
\url{https://aixiv.online/abs/2609.01654}.
Formal Lean proof available at
\url{https://github.com/tadamcz/mean-value-problem}.

\bibitem[BD00]{BD00}
R.~Bhatia and C.~Davis,
\emph{A better bound on the variance},
Amer. Math. Monthly \textbf{107} (2000), no.~4, 353--357.
\doi{10.1080/00029890.2000.12005203}.

\bibitem[Bor96a]{Bor96a}
I.~Borcea,
\emph{On the Sendov conjecture for polynomials with at most six distinct roots},
J. Math. Anal. Appl. \textbf{200} (1996), no.~1, 182--206.
\doi{10.1006/jmaa.1996.0198}.

\bibitem[Bor96b]{Bor96b}
I.~Borcea,
\emph{The Sendov conjecture for polynomials with at most seven distinct zeros},
Analysis \textbf{16} (1996), no.~2, 137--160.
\doi{10.1524/anly.1996.16.2.137}.

\bibitem[Bor98]{Bor98}
J.~Borcea,
\emph{Two approaches to Sendov's conjecture},
Arch. Math. (Basel) \textbf{71} (1998), no.~1, 46--54.
\doi{10.1007/s000130050232}.

\bibitem[Bor06]{Bor06}
J.~Borcea,
\emph{Maximal and linearly inextensible polynomials},
Math. Scand. \textbf{99} (2006), no.~1, 53--75.
\doi{10.7146/math.scand.a-14999}.

\bibitem[Bor07]{Bor07}
J.~Borcea,
\emph{Equilibrium points of logarithmic potentials induced by positive charge distributions.~I. Generalized de Bruijn--Springer relations},
Trans. Amer. Math. Soc. \textbf{359} (2007), no.~7, 3209--3237.
\doi{10.1090/S0002-9947-07-04251-1}.

\bibitem[CN06]{CN06}
W.~S.~Cheung and T.~W.~Ng,
\emph{A companion matrix approach to the study of zeros and critical points of a polynomial},
J. Math. Anal. Appl. \textbf{319} (2006), no.~2, 690--707.
\doi{10.1016/j.jmaa.2005.06.071}.

\bibitem[BS99]{BS99}
M.~G.~de Bruin and A.~Sharma,
\emph{On a Schoenberg-type conjecture},
J. Comput. Appl. Math. \textbf{105} (1999), no.~1--2, 221--228.
\doi{10.1016/S0377-0427(99)00013-8}.

\bibitem[GGTW25]{GGTW25}
B.~Georgiev, J.~G\'omez-Serrano, T.~Tao, and A.~Z.~Wagner,
\emph{Mathematical exploration and discovery at scale},
preprint, 2025.
\arxiv{2511.02864}.

\bibitem[KPPSS11]{KPPSS11}
D.~Khavinson, R.~Pereira, M.~Putinar, E.~B.~Saff, and S.~Shimorin,
\emph{Borcea's variance conjectures on the critical points of polynomials},
in \emph{Notions of positivity and the geometry of polynomials}
(P.~Br\"and\'en, M.~Passare, and M.~Putinar, eds.),
Trends Math., Birkh\"auser, Basel, 2011, pp.~283--309.
\doi{10.1007/978-3-0348-0142-3_16}.

\bibitem[KT16]{KT16}
O.~Kushel and M.~Tyaglov,
\emph{Circulants and critical points of polynomials},
J. Math. Anal. Appl. \textbf{439} (2016), no.~2, 634--650.
\doi{10.1016/j.jmaa.2016.03.005}.

\bibitem[LXZ21]{LXZ21}
M.~Lin, M.~Xie, and J.~Zhang,
\emph{Remarks on circulant matrices and critical points of polynomials},
J. Math. Anal. Appl. \textbf{502} (2021), no.~1, Paper No.~125233.
\doi{10.1016/j.jmaa.2021.125233}.

\bibitem[Mal05]{Mal05}
S.~M.~Malamud,
\emph{Inverse spectral problem for normal matrices and the Gauss--Lucas theorem},
Trans. Amer. Math. Soc. \textbf{357} (2005), no.~10, 4043--4064.
\doi{10.1090/S0002-9947-04-03649-9}.

\bibitem[Maz26]{Maz26}
L.~Mazur,
\emph{A computer-assisted proof of Sendov's conjecture},
public version, August~5, 2026, 19~pp., ProofAtlas,
\url{https://www.proofatlas.ai/papers/sendov-conjecture/SENDOV_CONJECTURE_PROOF_AUGUST_5_2026.pdf}.

\bibitem[MS69]{MS69}
A.~Meir and A.~Sharma,
\emph{On Ilyeff's conjecture},
Pacific J. Math. \textbf{31} (1969), no.~2, 459--467.
\doi{10.2140/pjm.1969.31.459}.

\bibitem[Per03]{Per03}
R.~Pereira,
\emph{Differentiators and the geometry of polynomials},
J. Math. Anal. Appl. \textbf{285} (2003), no.~1, 336--348.
\doi{10.1016/S0022-247X(03)00465-7}.

\bibitem[PR72]{PR72}
D.~Phelps and R.~S.~Rodriguez,
\emph{Some properties of extremal polynomials for the Ilieff conjecture},
Kodai Math. Sem. Rep. \textbf{24} (1972), no.~2, 172--175.
\doi{10.2996/kmj/1138846519}.

\bibitem[Sch77]{Sch77}
G.~Schmeisser,
\emph{On Ilieff's conjecture},
Math. Z. \textbf{156} (1977), no.~2, 165--173.
\doi{10.1007/BF01178761}.

\bibitem[Sch86]{Sch86}
I.~J.~Schoenberg,
\emph{A conjectured analogue of Rolle's theorem for polynomials with real or complex coefficients},
Amer. Math. Monthly \textbf{93} (1986), no.~1, 8--13.
\doi{10.1080/00029890.1986.11971734}.

\bibitem[Tan25]{Tan25}
Q.~Tang,
\emph{Schoenberg type inequalities},
preprint, 2025.
\arxiv{2504.09837}.

\bibitem[TZ25]{TZ25}
Q.~Tang and T.~Zhang,
\emph{Sharp Schoenberg type inequalities and the de Bruin--Sharma problem},
preprint, 2025.
\arxiv{2508.10341}.

\bibitem[Tao26]{Tao26}
T.~Tao,
\emph{A digestion of the proof of Sendov's conjecture},
What's New, August~12, 2026,
\url{https://terrytao.wordpress.com/2026/08/12/a-digestion-of-the-proof-of-sendovs-conjecture/}.

\bibitem[Zha26a]{Zha26a}
T.~Zhang,
\emph{A refinement of Pawlowski's result},
Proc. Amer. Math. Soc. \textbf{154} (2026), no.~2, 775--782.
\doi{10.1090/proc/17471}.

\bibitem[Zha26b]{Zha26b}
T.~Zhang,
\emph{Beyond Sendov's conjecture: the quadratic Tang--Zhang inequality},
preprint, 2026.
\arxiv{2609.19126}.

\bibitem[Zha26c]{Zha26c}
T.~Zhang,
\emph{Distributions of zeros of a complex polynomial},
J. Math. Anal. Appl. \textbf{561} (2026), no.~1, Paper No.~130609.
\doi{10.1016/j.jmaa.2026.130609}.

\bibitem[Zha26d]{Zha26d}
T.~Zhang,
\emph{Weighted weak majorization and sharp Schoenberg-type inequalities},
preprint, 2026.
\url{https://zhangteng2000.github.io/files/weighted_weak_majorization.pdf}.

\bibitem[Zha27]{Zha27}
T.~Zhang,
\emph{When do the convex hulls of zeros and critical points coincide?},
J. Math. Anal. Appl. \textbf{566} (2027), no.~1, Paper No.~131002.
\doi{10.1016/j.jmaa.2026.131002}.


\bibitem[ZhaL]{ZhaL}
T.~Zhang,
\emph{A Lean~4 formalization of the Borcea 2-variance conjecture},
Lean~4 formalization with mathlib,
2026.
Available at
\url{https://github.com/zhangteng2000/Borcea-2-variance-conjecture-Lean-4}.


\bibitem[ZhaS]{ZhaS}
T.~Zhang,
\emph{Supplementary code and certificates for Borcea's $2$-variance conjecture},
supplementary material, version September~29, 2026.

\end{thebibliography}
\end{document}